\pgfplotsset{compat=newest}
\newlength\figureheight
\newlength\figurewidth
\pgfplotsset{%
        tick label style={font=\scriptsize},
        label style={font=\footnotesize},
        legend style={font=\footnotesize},
        every axis plot/.append style={very thick}
}
\renewcommand{\leq}{\leqslant}
\renewcommand{\geq}{\geqslant}
\newcommand{\vb}{\vspace{3.2mm}}
\newcommand{\vertiii}[1]{{\left\vert\kern-0.25ex\left\vert\kern-0.25ex\left\vert #1
                \right\vert\kern-0.25ex\right\vert\kern-0.25ex\right\vert}}
\newtheorem{lemma}{Lemma}
\newtheorem{corollary}{Corollary}
\newtheorem{assumption}{Assumption}
\newtheorem{theorem}{Theorem}
\newtheorem{remark}{Remark}
\newtheorem{example}{Example}
\newtheorem{definition}{Definition}
\newtheorem{proposition}{Proposition}
\newtheorem{conj}{Conjecture}
\renewcommand{\fnum@figure}[1]{\textbf{\figurename~\thefigure}. }
\renewcommand{\fnum@table}[1]{\textbf{\tablename~\thetable}. }
\pgfplotsset{width=10cm,compat=1.9}
\renewcommand{\fnum@figure}[1]{\textbf{\figurename~\thefigure}. }
\renewcommand{\fnum@table}[1]{\textbf{\tablename~\thetable}. }
\begin{document}

\title[Delayed Hawkes birth-death processes]{Delayed Hawkes birth-death processes}       
\author{Justin Baars, Roger J.~A.~Laeven, and Michel Mandjes}
       
\begin{abstract}
We introduce, and formally establish, a variant of the Hawkes-fed birth-death process --- the \textit{delayed Hawkes birth-death process} --- in which the conditional intensity does not increase at arrivals but at departures from the system. 
In a scaling limit where sojourn times are stretched out by a factor $\sqrt T$, after which time gets contracted by a factor $T$, the delayed Hawkes process behaves markedly differently from its classical counterpart. 
We design a family of models admitting a cluster representation 
and containing the Hawkes and delayed Hawkes processes as special cases. 
The cluster representation allows for transform characterizations by a fixed-point equation and for analysis of heavy-tailed asymptotics.
We compare the delayed Hawkes process to the classical Hawkes process using stochastic ordering, which enables us to describe stationary distributions and heavy-traffic behavior.
In the Markovian network case, a recursive procedure is presented to calculate the $d$th-order moments analytically.  

\vb

\noindent
{\sc Keywords.} Self-exciting processes $\circ$ Hawkes processes $\circ$ Birth-death processes $\circ$ Scaling limits $\circ$ Branching processes $\circ$ Transform analysis $\circ$ Stochastic ordering.

\vb

\noindent
{\sc MSC 2020 Classifications.} Primary: 60G55; Secondary: 60E10, 60E15, 62E20.

\vb

\noindent
{\sc Affiliations.} 
JB and RL are with the Dept.~of Quantitative Economics, University of Amsterdam. 
RL is also with E{\sc urandom}, Eindhoven University of Technology, and with C{\sc ent}ER, Tilburg University. 
MM is with the Mathematical Institute, Leiden University, and is also affiliated with the Korteweg-de Vries Institute for Mathematics, University of Amsterdam; E{\sc urandom}, Eindhoven University of Technology, Eindhoven; Amsterdam Business School, University of Amsterdam. 
The research of JB and RL is 
funded in part by the Netherlands Organization for Scientific Research under an NWO VICI grant (2020--2027).
The research of MM is funded in part by the NWO Gravitation project N{\sc etworks}, grant number 024.002.003.

\vb

\noindent
{\sc Email addresses.} \url{j.r.baars@uva.nl},\  \url{r.j.a.laeven@uva.nl}, and\  \url{m.r.h.mandjes@math.leidenuniv.nl}.

\vb

\noindent
\textit{Date}: \today.
                           
\vspace{-1cm}
\end{abstract}

\maketitle
 
 
\section{Introduction}\label{introduction}
Since their introduction in 1971 \cite{Hawkes, Hawkes2}, Hawkes processes have gained significant attention in the academic literature. 
One notable application is in finance \cite{ACL15,ALP14,finance2, finance}, where they have been used to capture the clustering behavior of financial returns and transactions, such as stock trades or order arrivals in electronic markets. 
Hawkes processes have also been applied in social network analysis \cite{social media2}, to represent the contagious nature of information diffusion or the spread of online content in social media platforms. 
Additionally, in the field of seismology \cite{seismology, earthquakestressrelease, Ikefuji} they are used to model earthquake aftershock sequences. 
Other applications include the analysis of disease outbreaks \cite{covid}, the prediction of online user activity \cite{social media}, crime modeling \cite{criminology}, and the assessment of neuronal spike trains \cite{neuroscience}. 
The versatility of Hawkes processes makes them a valuable tool in various domains, providing insights into the underlying mechanisms driving the observed events.

The Hawkes process is a self-exciting c\`adl\`ag  point process $(N(t))_{t\geq0}$, which can be defined through its conditional intensity process $(\Lambda(t))_{t\geq0}$ \cite{DaleyVereJones}.
In the simplest linear, unmarked, univariate case, the (left-continuous, predictable) conditional intensity process is given by 
\begin{equation}\label{intensityHawkes}
\Lambda(t)=\lambda_0+\sum_{t_i<t}h(t-t_i)=\lambda_0+\int_{(-\infty,t)}h(t-s)\ \mathrm dN(s),
\end{equation}
where $\lambda_0>0$ is the \textit{baseline intensity}, or \textit{immigration intensity}, $(t_i)_{i\in\mathbb N}$ is an increasing sequence of \textit{arrival times}, $h:[0,\infty)\to[0,\infty)$ is the \textit{excitation kernel}, which is assumed to be integrable, and $N((-\infty,0))$ is some initial condition, typically either a random initial condition resulting in a stationary version of the process, or an empty history.

Besides being a suitable process to model real-world phenomena, the Hawkes process owes much of its popularity to its high tractability. 
In particular, recursive procedures have been developed to determine corresponding moments  \cite{elementary, matrix method, DZ11, Infinite server queues}; 
a procedure has been devised by which, in the context of Hawkes-fed population processes, 
transforms can be approximated by iterates of a certain operator \cite{multivariateKLM}; 
heavy-tailed and heavy-traffic asymptotics have been identified \cite{multivariateKLM, Infinite server queues}; 
techniques for nonparametric estimation of the model parameters, with provable performance guarantees, have been set up \cite{Kirchner}; 
a broad range of scaling and large deviation limits have been studied \cite{Bacry, Horst, Rosenbaum1, Rosenbaum2,ldpKLM,Zhu}; 
existence, uniqueness and stability results have been established that apply under great generality \cite{Stabilitypaper, Massoulie, AgeDependent}; 
and recently results on the distribution of the Hawkes process' underlying cluster duration have become available \cite{Conditional uniformity}. 
Evidently, this list is by no means exhaustive, but it provides an illustration of the process' amenability for analysis, focusing on contributions of direct relevance to this paper. 

Since its inception, various generalized versions of the basic variant of the Hawkes process have been examined, all of them being point or population processes in which the occurrence of events affect the conditional intensity process. 
For example, Massoulié \cite{Massoulie} considers a highly flexible family of models involving a (possibly) nonlinear intensity function $\lambda$: 
\begin{equation}
\Lambda(t)=\lambda\left(\int_{(-\infty,t)}h(t-s,B_s)\ \mathrm dN(s)\right),
\end{equation}  
which allows for the dependency on space-dependent \textit{random marks}, $(B_{s_{i}})_{i\in \mathbb{N}}$, taking values in some general measurable space.
Hawkes-driven birth-death population processes have been studied in \cite{dawpenderqueues, multivariateKLM, Infinite server queues}.
Another variant is the \textit{ephemerally self-exciting point process}, as introduced in \cite{Ephemeral}, in which the excitation caused by the $i$-th arrival vanishes after some stochastic time $J_i$. 
By considering this system as a birth-death process with lifetimes $(J_i)_{i\in\mathbb N}$, one could say that `a particle excites as long as it is in the system'. 
Another variant of the classical Hawkes process is analyzed in \cite{AgeDependent}, in which the excitation is dependent on the time since the last arrival, a phenomenon termed \textit{age-dependency}. 
A process that describes behavior opposite to the Hawkes process, is the {\it self-correcting process} \cite{self-correcting0, self-correcting1, self-correcting2}, in which any arrival \textit{decreases} the conditional intensity, making more arrivals in the near future {\it less} likely.\footnote{A general observation, based on the cases dealt with in the literature, is that tractability tends to be preserved for models that admit a cluster process representation. 
This is the case for multivariate linear marked Hawkes point and birth-death processes (covering specific Hawkes-fed population processes), and for the ephemerally self-exciting process. 
For these classes of processes one has succeeded in establishing analogs to results known for the classical Hawkes process. 
On the other hand, for processes having nonlinear intensity functions, age-dependent processes, Hawkes-fed single-server queues and self-correcting processes, there is no cluster representation, making such models considerably harder to analyze than the classical Hawkes process.}
Recently, Hawkes processes allowing for both self-excitation and self-inhibition were studied; see \cite{CCC22} and the references therein.

In this paper, we introduce a variant of the Hawkes process new to the literature, to the best of our knowledge. 
This variant is motivated as follows. 
Consider first a standard Hawkes-fed birth-death population process, or `infinite-server queue with Hawkes input', denoted by $(Q(t))_{t\geq0}$; see e.g.,\ \cite{dawpenderqueues,Infinite server queues}. 
Then, particles arrive at rate $\Lambda(t)$, and at the $i$-th {\it arrival} at time $t_i$, the conditional intensity process $\Lambda(t)$ jumps upwards by $B_ih(t-t_i)$, where $(B_i)_{i\in\mathbb N}$ are i.i.d.\ marks. 
The particle stays in the system for a duration $J_i$, where $(J_i)_{i\in\mathbb N}$ are i.i.d.\ lifetimes, or `service times' in queueing terminology; 
after departure, the excitation effect is still present. 
By contrast, we define a process in which the conditional intensity does not jump at arrivals, but at {\it departures} from the system. 
More specifically, the intensity process $\Lambda(t)$ does not change at an arrival, but jumps upwards by $B_ih(t-t_i')$ at the $i$-th departure at time $t_i'$. 
In this situation, an arrival still increases the conditional intensity $\Lambda(t)$, but only after a \emph{delay} equal to its lifetime (or service time, in queueing terms).
For this reason, one may call the corresponding process $(Q(t))_{t\geq0}$ 
a \textit{delayed Hawkes birth-death process}, or 
a \textit{delayed Hawkes infinite-server queue}; or, more briefly, a DH/G/$\infty$ queue, using Kendall's notation. 
We refer to the counting process $(N(t))_{t\geq0}$ as a \textit{delayed Hawkes process} or briefly as \textit{delayed Hawkes}.\footnote{The terms `infinite-server queue' and `birth-death population process' can be used interchangeably. 
In fact, one could argue that an infinite-server queue is not really a queue, since customers are always served directly, do not observe each other, and never wait. 
Using population processes terminology, one could refer to delayed Hawkes infinite-server queues as \textit{birth-death processes exhibiting posthumous excitation}.}

A typical realization of the delayed Hawkes birth-death process can be found in Figure~\ref{figDHreal}.
Intuitively, one would expect this process to share some common features with the classical Hawkes process, but with a `lower level of clustering' of events: one has to wait some time (distributed as the random variable $J$) for the excitation to start, so that arrivals induced by excitation are further away from the initial arrival than under the classical Hawkes process.

\begin{figure}[!htbp]
\centering
   \includegraphics[width=1\linewidth]{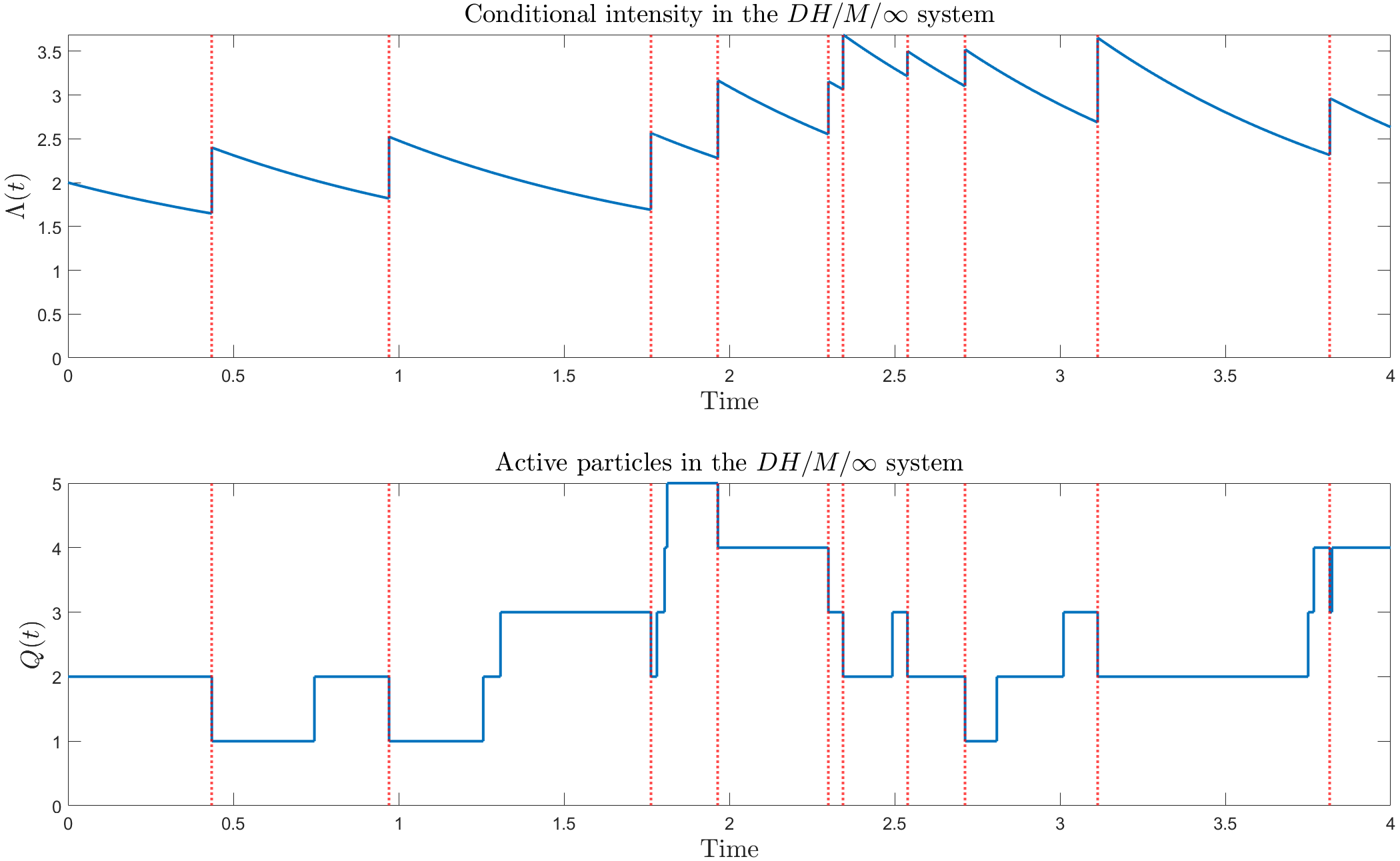}
   \caption{A realization of the Markovian DH/M/$\infty$ queue, with $\lambda_0=1$, $h(t)=e^{-t}$, $B\sim\mathrm{Beta}(3.5,1.5)$ and $J\sim\mathrm{Exp}(1)$. 
   We start at $Q(0)=2=\Lambda(0)$. 
   The vertical dotted lines correspond to departures, causing intensity increases.} 
   \label{figDHreal}
\end{figure}

By setting the lifetimes $J$ equal to zero and by keeping track of $N(\cdot)$, we recover the classical Hawkes process, entailing that the delayed Hawkes process constitutes a generalization of the classical Hawkes process.
In the following examples, the delayed Hawkes birth-death process may provide a realistic and appealing probabilistic model.
\begin{itemize}
\item Word-of-mouth referrals: in a queueing context, customers who are satisfied about the service may excite other potential customers. 
Therefore, the arrival process may behave like a self-exciting process; however, a customer typically does not start exciting others during the service, but only starts doing so upon leaving/finishing the system/service.
\item In epidemiology, the spread of infectious diseases often exhibits \emph{delayed} self-exciting behavior. 
Indeed, when an individual becomes infected, there is typically an incubation period before the individual starts showing symptoms or becomes contagious. 
As more individuals become infected,   start exhibiting symptoms and become contagious, the transmission rate increases, resulting in a (delayed) increase in the number of new cases.
\item A financial order typically triggers more orders, but it may take time before an order is executed and therefore before it starts exciting. 
Even when the execution time is (very) small, as in liquid electronic markets, this delay changes the dynamics. 
Similar patterns arise in neuroscience.
\item On social media platforms, the spread of content can exhibit self-exciting behavior with delay. 
When a popular post or topic emerges, it can trigger a cascade of user interactions. 
As it takes time for users to engage with the content and for the effects to ripple through their social networks, the propagation of these interactions can display a delayed response.
\end{itemize}
If we would like to model real-world phenomena, such as those described in the examples above, using the delayed Hawkes process, we need the process to be tractable, in order to understand its probabilistic structure. 
As it turns out, this novel process is remarkably tractable: its linear version admits a cluster process representation, and many results that are known for the classical Hawkes process have suitably modified counterparts for the delayed Hawkes process.

This work contributes to the literature in several ways. 
First, we introduce the delayed Hawkes process. 
In fact, more generally, we introduce a family of multivariate sojourn-time dependent point processes, containing the classical Hawkes, delayed Hawkes, and the ephemerally self-exciting point process \cite{Ephemeral} as special cases. 
This general family of models is formulated via a stochastic differential equation for the conditional intensity process, which we exploit to prove existence, uniqueness and stability results, leveraging methodology from \cite{Stabilitypaper, Massoulie}. 

Second, we contribute to a rich literature on scaling limits for Hawkes processes, see e.g., \cite{Bacry, Horst, Rosenbaum1, Rosenbaum2}, by deriving a scaling limit that exhibits the effect of the delay for the delayed Hawkes process. 
Specifically, we show that our family of models obeys the same functional central limit theorem as the classical Hawkes process;
however, in a scaling regime in which sojourn times are stretched out by a factor $\sqrt T$, after which time gets contracted by a factor $T$, and $T$ is sent to $\infty$, the delayed Hawkes process behaves markedly differently from its classical counterpart. 

Third, for the linear version of our family of models, we provide a cluster process representation, allowing us to derive fixed-point equations that enable transform characterizations. 
In addition, we employ these fixed-point equations to establish heavy-tailed asymptotics. 
We use the cluster representation of the Hawkes and delayed Hawkes processes to prove stochastic dominance results, which are typically proved by comparing sample paths. 
In essence, we couple sample paths only within generations, obtaining a complex genealogical coupling for both processes. 
From a methodological standpoint, the ideas underlying this approach have the potential to be fruitful in other contexts as well.

Finally, we generalize results of \cite{Infinite server queues} for calculating moments of the Hawkes process in the univariate, Markovian setting, to a higher-dimensional, delayed Hawkes setting, also allowing for network effects.
Interestingly, this analysis now involves a Clement-Kac-Sylvester matrix.

The remainder of this article is structured as follows. 
In Section~\ref{preliminaries}, we introduce a general family of multivariate point process models encompassing classical Hawkes, delayed Hawkes and ephemeral Hawkes as special cases.
In Section~\ref{existence}, existence, uniqueness and stability results are established for this general family of models.
Section~\ref{scalinglimit} studies scaling limits; in particular, we derive a scaling limit for delayed Hawkes highlighting the effect of the delay.
In Section~\ref{chapternonmarkov}, 
we use cluster-representation based methods to describe fixed points in the transform domain;
and 
exploit those fixed-point equations to derive heavy-tailed asymptotics.
In Section~\ref{comparisons}, we compare Hawkes to delayed Hawkes systems using stochastic ordering. 
In Section~\ref{markov}, we study Markovian models, for which we describe recursive methods to calculate moments analytically.
We provide a discussion and concluding remarks in Section~\ref{discussion}.
Various (lengthy) proofs and some additional results are relegated to the Appendix.
In online Supplementary Material \cite{Supplement}, we provide the proof of Theorem~\ref{summary KLM}.

\section{Model definitions} \label{preliminaries}
In this section, we introduce and provide definitions for a  family of multivariate point process models having sojourn-time dependent excitation, using both their conditional intensity processes and, in the linear case, their cluster process representation.
Furthermore, we define a network of delayed Hawkes birth-death processes through conditional intensities. 
 
We start by defining a family of models exhibiting sojourn-time dependent excitation, encompassing the classical Hawkes, the delayed Hawkes, and the ephemerally self-exciting \cite{Ephemeral} process.
We first describe this family of models through a conditional intensity representation, allowing for nonlinearity.
We then restrict attention to the linear case for which we also provide a cluster representation-based definition. 
The two definitions are equivalent for processes starting on an empty history, whenever the cluster representation exists (i.e., in the linear case).
The conditional intensity-based definition allows for nonlinear effects, but we only use this in Section~\ref{existence} when proving existence, uniqueness and stability results; in the rest of this article we focus on the linear case. 
 
We denote the $d$-dimensional joint point (or counting), birth-death and conditional intensity process of the intended model (defined below) by the triple $(\boldsymbol  N(t),\boldsymbol  Q(t),\boldsymbol\Lambda(t))_{t\geq0}$, with $\boldsymbol  Z(t)=[Z_1(t)\,\cdots\, Z_d(t)]^\top$, for $ \boldsymbol Z\in\{\boldsymbol N,\boldsymbol Q,\boldsymbol\Lambda\}$.
We write $t_1<t_2<\cdots$ for the a.s.\ increasing sequence of jump (or event) times of $\boldsymbol N(\cdot)$, and we denote events by triples $(t_r,j_r,J_r)$, where $J_r\sim J_j$ if $j_r=j$. 
We assume that an arrival in coordinate $j$ at time $t_r$ induces a \emph{random} jump in the intensity in the $i$-th coordinate of size $h_{ij,J,\omega}(\cdot-t_r)$; the randomness in $h_{ij,J}$ is modeled by the $\omega$-dependence.
  
\begin{definition}[Conditional intensity for $d$-dimensional point processes with sojourn-time dependent excitation]\label{Def1condint}
Let $d\in\mathbb N$ denote the dimension. 
For $j\in[d]$, let $J_j$ be the positive sojourn time random variable of coordinate $j$. 
For each $i,j\in[d]$, 
let $\omega\mapsto h_{ij,J_j,\omega}$ be a random $J_j$-dependent piecewise continuous function with support contained in $[0,\infty)$, for almost all $(J_j,\omega)$. 
Furthermore, suppose that for each $i,j\in[d]$, $\mathbb E_{J_j}\mathbb E_{\omega|J_j}\|h_{ij,J_j,\omega}\|_{L^\infty}<\infty$.
Assume that the realizations of the random functions are conditionally (on $J$) cross-sectionally and serially independent.
Suppose that the lifetimes are drawn at the time of arrival. Let $\boldsymbol H_{\boldsymbol J}$ be the (random, $\boldsymbol J$-dependent) matrix consisting of elements $(\boldsymbol H_{\boldsymbol J})_{ij}=h_{ij,J}$, where the $j$-th column is dependent on the same realization of $J_j$. 
Define the $d$-dimensional c\`adl\`ag point process $\boldsymbol N=(N_i(t))_{i\in[d],t\in\mathbb R}$  with sojourn-time dependent excitation through
\begin{align*}
\mathbb P(N_i(t+\Delta t)-N_i(t)=0\,|\,\mathcal H_t)&=1-\Lambda_i(t)\Delta t+o(\Delta t),\\\mathbb P(N_i(t+\Delta t)-N_i(t)=1\,|\,\mathcal H_t)&=\Lambda_i(t)\Delta t+o(\Delta t), \\\mathbb P(N_i(t+\Delta t)-N_i(t)\geq2\,|\,\mathcal H_t)&=o(\Delta t),
\end{align*}
as $\Delta t\downarrow 0$, where $(\mathcal H_t)_{t\in\mathbb R}=\sigma(\boldsymbol N(s),(\boldsymbol H_{\boldsymbol J}(\cdot))(s),\boldsymbol J(s):s\leq t)_{t\in\mathbb{R}}$ is the natural filtration generated by $\boldsymbol N$ along with random lifetimes $\boldsymbol J(s)$ and excitation kernels $(\boldsymbol H_{\boldsymbol J}(\cdot))(s)$ corresponding to an arrival at time $s$.
In the linear case, we set \begin{equation}
\boldsymbol\Lambda(t)=\boldsymbol\lambda_0+\int_{-\infty}^t\boldsymbol H_{\boldsymbol J}(t-s)\ \mathrm d\boldsymbol N(s),\label{intensity multi}
\end{equation} 
where $\boldsymbol\lambda_0\geq 0$, with at least one of the base rates being strictly positive, and the integral in \eqref{intensity multi} is understood to exclude $t$.
In the nonlinear case, we take measurable $L_i$-Lipschitz functions $\phi_i:\mathbb R\to\mathbb R_+$, for each $i\in[d]$, and define the conditional intensity of the $i$-th coordinate via 
\begin{equation}
\Lambda_i(t)=\phi_i\left(\sum_{j=1}^d\int_{-\infty}^th_{ij,J}(t-s)\ \mathrm dN_j(s)\right)\label{intensity nonlinear1}.
\end{equation}
\end{definition}

From the point process, including the realizations of sojourn times, the birth-death process $(\boldsymbol Q(t))_{t\geq0}$ can easily be constructed. 
The conditional intensity process $\boldsymbol\Lambda(\cdot)$ is taken left-continuous and is predictable; cf.\ \cite[Example 7.2(b) and Ch.~14]{DaleyVereJones}. 
We interchangeably start the point process on a history on $(-\infty,0)$, which typically refers to the stationary version of the point process, or on an empty history, in which case an integral $\int_{-\infty}^t\cdot\ \mathrm d\boldsymbol{N}(s)$ reduces to $\int_{0}^t\cdot\ \mathrm d\boldsymbol{N}(s)$.

Definition~\ref{Def1condint} encompasses the multivariate marked \emph{classical} Hawkes process, the multivariate marked hybrid \emph{ephemerally} self-exciting process (cf.\ \cite{Ephemeral}), and the multivariate marked \emph{delayed} Hawkes process, which are defined by setting
\begin{align}
h_{ij,J,\omega}(\cdot)=
\begin{cases}
B_{ij,\omega}h_{ij}(\cdot),&\qquad\mathrm{(classical)}\\
B_{ij,\omega}h_{ij}(\cdot)\mathbf 1\{\cdot<J\},&\qquad\mathrm{(ephemeral)}\\
B_{ij,\omega}h_{ij}(\cdot-J)\mathbf 1\{\cdot>J\},&\qquad\mathrm{(delayed)}
\end{cases}
\label{eq:general}
\end{align}
respectively.

We highlight the richness of the family of processes introduced in Definition~\ref{Def1condint}. 
Notably, this family includes processes where the degree of self-excitation depends, positively or negatively, upon the lifetimes of particles. 
For instance, one can define $$h_{ij,J,\omega}(\cdot) = B_{ij,J,\omega} h_{ij}(\cdot).$$ 
One might \emph{a priori} expect that such general models would be intractable; however, e.g., Theorem~\ref{summary KLM} in Section~\ref{chapternonmarkov} demonstrates that calculations for these models can, in fact, be carried out effectively.

In Section~\ref{existence}, existence, uniqueness and stability for the nonlinear family of sojourn-time dependent point processes with intensities \eqref{intensity nonlinear1} is established. 
For the rest of this article, we focus on the linear case \eqref{intensity multi}. 
The linear case admits a cluster representation, as follows.
\renewcommand{\theenumi}{\roman{enumi}}
\begin{definition}[Cluster representation for $d$-dimensional point processes with sojourn-time dependent excitation]\label{DHcluster2} 
Let $d\in\mathbb N$ denote the dimension. 
For $j\in[d]$, let $J_j$ be the positive sojourn time random variable of coordinate $j$. 
For each $i,j\in[d]$, 
let $h_{ij,J_j,\omega}$ be a random $J_j$-dependent piecewise continuous function with support contained in $[0,\infty)$, for almost all $(J_j,\omega)$.  
Let $K_{ij,J_j,\omega}$ be an inhomogeneous Poisson process of intensity $h_{ij,J_j,\omega}$. Furthermore, suppose that for each $i,j\in[d]$, $\mathbb E_{J_j}\mathbb E_{\omega|J_j}\|h_{ij,J_j,\omega}\|_{L^\infty}<\infty$, i.e., that $h_{ij,J,\omega}$ is a.s.\ bounded. 

Now let $T\in[0,\infty]$, and define a point process $\boldsymbol N(\cdot)$ through a sequence of events generated according to the following procedure:
\begin{enumerate}
\item For $j\in[d]$, let $I_j(\cdot)$ be a homogeneous Poisson process of rate $\lambda_{j,0}$, generating immigration events \[\{(t_r^{(0)},j,J_r^{(0)})\}_{r=1}^{R_j^{(0)}(t)},\] and where $J_r^{(0)}\stackrel{\mathrm{iid}}\sim J_j$ and where $R_j^{(0)}(t)$ is the number of immigration events in component $j$ up to time $t\in[0,T]$.
\item For each immigration event $(t_r^{(0)},j,J_r^{(0)})$, in each target component $m\in[d]$, generate first-generation events \[\{(t_r^{(1)},m,J_r^{(1)})\}_{r=1}^{R_m^{(1)}(t)}\] according to $K_{mj,J_r^{(0)},\omega}(t-t_r^{(0)})$, where $J_r^{(1)}\stackrel{\mathrm{iid}}\sim J_m$.
\item Upon iterating the above rule, given an $r$-th event of the $(n-1)$-st generation in source component $m\in[d]$, for each target component $l\in[d]$, descendant $(t_r^{(n-1)},m,J_r^{(n-1)})$ generates $n$-th generation events \[\{(t_r^{(n)},l,J_r^{(n)})\}_{r=1}^{R_l^{(n)}(t)},\] according to $K_{lm,J_r^{(n-1)},\omega}(t-t_r^{(n-1)})$, and where $J_r^{(n)}\stackrel{\mathrm{iid}}\sim J_l$.
\end{enumerate} 
Here, the Poisson processes are conditionally independent within and between each iteration, and the excitation functions are drawn conditionally independently. 
Then, 
$$\boldsymbol N(t)=\bigcup_{n\geq0}\left(\{(t_r^{(n)},1,J_r^{(n)})\}_{r=1}^{R_1^{(n)}(t)}\times\cdots\times\{(t_r^{(n)},d,J_r^{(n)})\}_{r=1}^{R_d^{(n)}(t)}\right)$$ is the resulting multivariate sojourn-time dependent Hawkes process.
\end{definition}
In Definition~\ref{DHcluster2}, having drawn lifetimes and excitation functions, one can construct the corresponding birth-death and conditional intensity processes, $\boldsymbol Q(\cdot),\boldsymbol\Lambda(\cdot)$, in a straightforward manner.


The cluster representation from Definition~\ref{DHcluster2} exhibits the following useful properties. 
First, modulo the time shift corresponding to the arrival times, clusters generated by immigrants in the same coordinate are i.i.d. 
Second, cluster processes are generated independently across source components. 
Finally, within each source component, every event produces offspring using an identical iterative procedure, as each child represents a cluster, thus demonstrating \textit{self-similarity}.

For later use, these cluster properties can be operationalized using notation borrowed from \cite{multivariateKLM}. 
For an arrival in coordinate $j$, denote the $d$-dimensional counting, birth-death and intensity cluster process it generates by $\boldsymbol S_j^{\boldsymbol N}(\cdot), \boldsymbol S_j^{\boldsymbol Q}(\cdot),\boldsymbol S_j^{\boldsymbol \Lambda}(\cdot)$, respectively. 
Those have $i$-th coordinate $S_{i\leftarrow j}^{\boldsymbol \star}(\cdot)$ for $\boldsymbol \star\in\{\boldsymbol N,\boldsymbol Q,\boldsymbol \lambda\}$.
Here, $S_{i\leftarrow j}^{\boldsymbol N}(u)$ records the number of events in component $i$ up to time $u$ with as oldest ancestor the arrival generating $\boldsymbol S_j^{\boldsymbol N}(\cdot)$, including the arrival itself when $i=j$. 
Similarly, $S_{i\leftarrow j}^{\boldsymbol Q}(u)$ records the number of nonexpired events in component $i$ up to time $u$ with as oldest ancestor the arrival generating $\boldsymbol S_j^{\boldsymbol Q}(\cdot)$, including the ancestor itself if $i=j$ and if the ancestor has not yet left the system. 
Finally, $S_{i\leftarrow j}^{\boldsymbol \Lambda}(u)$ records aggregated  change in the intensity of component $i$ caused by jumps with excitation functions $h_{im,J,\omega}$, following arrivals in component $m$ with sojourn time $J$ within the cluster $\boldsymbol S_j^{\boldsymbol \Lambda}(\cdot)$ generated by an arrival in component $j$. 

Next, we define a $d$-dimensional network $(\boldsymbol  N(t),\boldsymbol  Q(t),\boldsymbol\Lambda(t))_{t\geq0}$ of (linear) delayed Hawkes birth-death processes. 

\begin{definition}[Network of delayed Hawkes birth-death processes]\label{multivariatedef}
Let $d\in\mathbb N$, and let $\mu_j,\mu_{ij}\geq0$ for all $i,j\in[d]$, such that for each $j\in[d]$, either $\mu_j>0$, or there is a sequence $(i_1,\ldots,i_k)\subset[d]$ such that $\mu_{i_k}\mu_{i_{k}i_{k-1}}\mu_{i_{k-1}i_{k-2}}\cdots\mu_{i_2i_1}\mu_{i_1j}>0$. For each $i,j\in[d]$, let $\lambda_{i,0}\geq0$,  let $(B_{ij}(s))_{s\in\mathbb R}$ be a collection of cross-sectionally and serially independent distributed random marks, distributed as the generic random variable $B_{ij}$, which is assumed to be positive a.s., and let $h_{ij}\in L^\infty$ be a.s.\ positive excitation functions.
Suppose that $\boldsymbol N(0)=\boldsymbol  Q(0)=\boldsymbol0$ and $\boldsymbol\Lambda(0)=\boldsymbol\lambda_0:=[\lambda_{1,0}\cdots \lambda_{d,0}]^\top$.
A \textit{network of delayed Hawkes birth-death processes} involves a $d$-dimensional point process $\boldsymbol  N(\cdot)$, taking values in $\mathbb N_0^d$, whose components $N_i(\cdot)$ satisfy, as $\Delta t\downarrow 0$, \begin{align*}\mathbb P(N_i(t+\Delta t)-N_i(t)=0\,|\,\mathcal H_t)&=1-\Lambda_i(t)\Delta t+o(\Delta t),\\\mathbb P(N_i(t+\Delta t)-N_i(t)=1\,|\,\mathcal H_t)&=\Lambda_i(t)\Delta t+o(\Delta t),\\\mathbb P(N_i(t+\Delta t)-N_i(t)\geq2\,|\,\mathcal H_t)&=o(\Delta t).
\end{align*} 
Suppose that the \textit{network of birth-death processes} $\boldsymbol  Q(\cdot)$ satisfies the following dynamics. 
(We write $\boldsymbol e_i$ for the $i$-th standard unit vector in $\mathbb R^d$.)
\begin{itemize}
\item \textit{Arrivals}, which are jumps upwards by $\boldsymbol e_i$, match jumps in $N_i(\cdot)$;
\item \textit{Rerouting} from coordinate $j$ to $i$, that is, a jump by $\boldsymbol e_i-\boldsymbol e_j$, occurs with probability $\mu_{ij}Q_j(t)\Delta t+o(\Delta t)$ in $(t,t+\Delta t)$;
\item \textit{Departures}, which are jumps downwards by $\boldsymbol e_j$, occur with probability $\mu_jQ_j(t)\Delta t+o(\Delta t)$ in $(t,t+\Delta t)$.
\end{itemize}
Now let $\boldsymbol D(\cdot)$  be the \textit{departure process}, taking jumps upwards by $\boldsymbol e_j$ precisely when there is a departure in coordinate $j$, i.e., when $\boldsymbol Q(\cdot)$ jumps downwards by $\boldsymbol e_j$.
The intensity $\Lambda_i(\cdot)$ of component $i$ is given by \begin{equation}\Lambda_i(t)=\lambda_{i,0}+\sum_{j=1}^d\int_{(0,t)}B_{ij}(s)h_{ij}(t-s)\ \mathrm dD_j(s).\label{networkintensity}\end{equation} 
The $\mathcal H_t$-progressively measurable process $\boldsymbol \Lambda(\cdot)$ is called the \textit{conditional intensity process}. 
(In \eqref{networkintensity}, we may integrate over $(-\infty,t)$ in order to study the process in stationarity.)
\end{definition}

We note that the process from Definition~\ref{multivariatedef} is Markovian if and only if we have $h_{ij}(t)=e^{-r_it}$ for all $i,j\in[d]$, where the $r_i$'s are called \textnormal{exponential rates}. 
We also note that one can easily generalize Definition~\ref{multivariatedef} to nonexponential sojourn times.

A particle in coordinate $j$ moves away at rate $\tilde\mu_j:=\mu_j+\sum_{i=1}^d\mu_{ij}$, after which it leaves the system with probability $\mu_j/\tilde\mu_j$, and is rerouted to coordinate $i$ with probability $\mu_{ij}/\tilde\mu_j$. 
Note that we do not assume that we have a feedforward network: we allow for the possibility of loops. 
Although in natural applications one would typically set $\mu_{jj}=0$, we do not make that assumption either.
%
A particle creates excitation as soon as it leaves the system. 
Because of the possibility of rerouting, this is not necessarily in the coordinate where the particle arrived. 
It is possible to study a model where rerouting creates excitation as well: this yields similar results as those found in Section~\ref{networksmarkov}.

At a departure in coordinate $j$, there is a jump $B_{ij}$ in each coordinate $i$, so that we have \textit{mutual excitation}. We let $\boldsymbol  B_j=[B_{1j}\cdots B_{dj}]^\top$ denote the vector of marks resulting from a departure in coordinate $j$. 

\section{Existence, uniqueness and stability} \label{existence}
In this section, we prove that there exists a unique stationary distribution for the process $\boldsymbol N(\cdot)$ from Definition~\ref{Def1condint} having nonlinear sojourn-time dependent excitation, and we state conditions under which a transient process satisfying the given dynamics is shown to converge to this stationary distribution. 
In contrast to classical Hawkes, at each arrival a random excitation function is drawn, whose distribution depends on the sojourn time realization. 
It suffices to consider a model with i.i.d.\ random excitation functions $h_{ij}$ having a distribution only depending on $(i,j)$; the $J$-dependent randomness of the form $\omega|J$ occurs as a special case of this general randomness. 

Let $(N_r,j_r,h_{1r},\ldots,h_{dr})_{r\in\mathbb Z}$ be the events of a random-function marked point process $\boldsymbol N(\cdot)$, where $N_r$ denotes the $r$-th event after time $0$ for $r\geq1$, and the $-(r+1)$ event before time $0$ for $r\leq0$; where $j_r$ denotes the coordinate in which this event occurred; and where $h_{ir}\sim h_{ij_r}$ denotes the excitation function for coordinate $i$ associated to the $r$-th arrival.
Let $(\Omega_{ij},\mathcal F_{ij},\mathbb Q_{ij})$ be the probability space on which $h_{ij}$ is defined.
Letting $\mathcal H_t^{\boldsymbol N}$ be the history of $\boldsymbol N(\cdot)$ up to time $t$, we assume that the model is driven by an $\mathcal H_t^{\boldsymbol N}$-progressively measurable intensity with $i$-th coordinate
 \begin{equation}\Lambda_i(t)=\phi_i\left(\sum_{j=1}^d\int_{(-\infty,t)\times\Omega_{ij}}h_{ij}(t-s,\omega_{ij})N_j(\mathrm ds\times\mathrm d\omega_{ij})\right)\label{intensity nonlinear},\end{equation}
with the understanding that the random functions $\omega_{ij}\mapsto h_{ij}(\cdot,\omega_{ij})$ are drawn independently with common distribution $h_{ij}$, for all $i,j\in[d]$. We assume that $\phi_i:\mathbb R\to\mathbb R_+$ and $h_{ij}:\mathbb R_+\to\mathbb R$, for all $i,j\in[d]$. In the linear case, which is the main focus of this paper, $\phi_i(x)=\lambda_{i,0}+x$ and $h_{ij}:\mathbb R_+\to\mathbb R_+$. In the univariate case, (\ref{intensity nonlinear}) reduces to
 \begin{equation}\Lambda(t)=\phi\left(\int_{(-\infty,t)\times\Omega}h(t-s,\omega)N(\mathrm ds\times\mathrm d\omega)\right)\label{intensity nonlinear uni},\end{equation}
 where $(\Omega,\mathcal F,\mathbb Q)$ is the probability space on which the random functions $\omega\mapsto h(\cdot,\omega)$ are defined.
 
We construct an adapted point process: $N$ on $\mathbb R\times\Omega$ with intensity $\Lambda(t)\mathbb Q(\mathrm d\omega)$ in the univariate case, and $\boldsymbol N$ on $\prod_{i=1}^d\left(\mathbb R\times\prod_{j=1}^d\Omega_{ji}\right)$ with intensity $\Lambda_i(t)\prod_{j=1}^d\mathbb Q_{ji}(\mathrm d\omega_{ji})$ in coordinate $i$ in the multivariate case. 
In Appendix~A, we present a proof for existence, uniqueness and stability of the univariate process having dynamics \eqref{intensity nonlinear uni}, 
leveraging the classical Picard proof for the existence of solutions to a differential equation,
following the approach of \cite{DaleyVereJones}, \S14.3 and \cite{Stabilitypaper}, Theorem~1 and using ideas from \cite{Massoulie}.
From this, the multivariate results can be proved along the lines of \cite{Stabilitypaper}, Theorem~7, taking the randomness of the excitation functions into account in the same fashion as we do in the univariate case. 
 
The conditional intensity specification \eqref{intensity nonlinear uni} deals with i.i.d.\ random excitation functions, which can be seen to exist by invoking the Kolmogorov extension theorem. 
However, for a single random function, this construction only enables us to say something about the behavior of the function on a countable subset of $\mathbb R$, but in general this does not allow us to conclude anything about sample-path properties, such as measurability. 
To tackle this problem, we make additional assumptions on the generic random function $h(\cdot)$.

\begin{definition}\label{separabledef}
A random function $h$ is called \textit{separable} with respect to a class $\mathcal K$ of subsets of $\mathbb R$ if there exists a countable subset $C\subset\mathbb R$ such that for each $K\in\mathcal K$ and each open interval $I\subset\mathbb R$ it holds that 
\begin{equation*}
\bigcap_{t\in I\cap C}\{h(t)\in K\}=\bigcap_{t\in I}\{h(t)\in K\},\quad\text{a.s.}
\end{equation*}
\end{definition}

\noindent We typically assume that the random function $h$ is a.s.\ piecewise continuous. 
In that case, $h$ is separable with respect to the class of open subsets of $\mathbb R$, taking $C$ to be any countable dense subset of $\mathbb R$, e.g., the set of rational numbers. 
By \cite{Neveu}, \S III.4, measurability of $(\omega,t)\mapsto h_\omega(t)$ can then be ensured. 
The feasibility of such a construction essentially comes down to the separability of the range space of the excitation functions.

In the following, we construct a univariate process having dynamics \eqref{intensity nonlinear uni} upon a basis consisting of a bivariate Poisson process of unit rate marked by random functions $h(\cdot,\omega)$, for which we use Lemma~\ref{Poisson construction} below. 
This is a well-known result underlying many simulation algorithms of point processes driven by conditional intensities, see e.g., \cite{OgataLewis}. 
Compare \cite{Massoulie}, Lemma~1.
To state this lemma, we define the \textit{left-shift operator} $S_t$, $t\in\mathbb R$. 
For a univariate stochastic process $X$, we set $S_tX(A)=X(A+t)$, for all $A\in\mathcal B(\mathbb R)$, with $\mathcal B$ the $\sigma$-algebra of Borel sets. 
Furthermore, we set $$X_+=\{X(A):A\in\mathcal B([0,\infty))\},\quad X_-=\{X(A):A\in\mathcal B((-\infty,0])\}.$$ 
With this notation, $S_tX_\pm$ can be interpreted as the future/history at time $t$. 
For a multivariate stochastic process, we assume that this shift is done with respect to the first variable, which is to be interpreted as time. 
In particular, the \textit{history} at time $t$ of a $(d+1)$-dimensional process $Y$ is given by $S_tY_-:=\{Y(A):A\in\mathcal B((-\infty,t]\times\mathbb R^{d})\}$.

\begin{lemma}\label{Poisson construction}
Let $M$ be a marked Poisson process on $\mathbb R\times\mathbb R_+\times\Omega$ with intensity $\mathrm dt\times\mathrm ds\times\mathbb Q(\mathrm dz)$, where the marks are defined on $(\Omega,\mathcal F,\mathbb Q)$. 
Let $\mathcal H_t^M$ be a sigma-algebra containing the history of $M$ at time $t$, such that $\mathcal H_s^M$ is independent of $S_tM_+$ for $s<t$. 
For some $\mathcal H_t^M$-predictable process $\Lambda(\cdot)$, define \begin{equation}N(A\times B)=\int_{A\times\mathbb R_+\times B}\mathbf1_{[0,\Lambda(t)]}(s)M(\mathrm dt\times\mathrm ds\times\mathrm dz),\quad A\times B\in\mathcal B(\mathbb R)\otimes\mathcal F.\end{equation} Then $N$ admits $\Lambda(t)\mathbb Q(\mathrm dz)$ as an $\mathcal H_t^M$-intensity.
\end{lemma}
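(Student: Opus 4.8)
The plan is to verify directly that the random measure $N$ defined by the thinning formula has the claimed $\mathcal H_t^M$-intensity, by checking the martingale characterization of the intensity. Concretely, writing $N(\mathrm dt\times\mathrm dz)$ for the constructed measure, it suffices to show that for every bounded nonnegative $\mathcal H_t^M$-predictable integrand $C(t,z)$ one has
\begin{equation*}
\mathbb E\left[\int_{\mathbb R\times\Omega}C(t,z)\,N(\mathrm dt\times\mathrm dz)\right]=\mathbb E\left[\int_{\mathbb R\times\Omega}C(t,z)\,\Lambda(t)\,\mathrm dt\,\mathbb Q(\mathrm dz)\right],
\end{equation*}
and, more strongly, that the compensated version is an $\mathcal H_t^M$-martingale. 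First I would substitute the definition of $N$ to rewrite the left-hand side as an integral against $M$ of the integrand $C(t,z)\mathbf 1_{[0,\Lambda(t)]}(s)$ over $\mathbb R\times\mathbb R_+\times\Omega$. The key point is that this new integrand is itself $\mathcal H_t^M$-predictable: $C(t,z)$ is predictable by assumption, $\Lambda(t)$ is $\mathcal H_t^M$-predictable by hypothesis, and the indicator $\mathbf 1_{[0,\Lambda(t)]}(s)$ is a measurable function of $(\Lambda(t),s)$, hence predictable in $(t,s,z)$ jointly.

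Next I would invoke the known compensator of the marked Poisson process $M$: since $M$ has deterministic intensity $\mathrm dt\times\mathrm ds\times\mathbb Q(\mathrm dz)$, and since the filtration $\mathcal H_t^M$ satisfies the compatibility condition that $\mathcal H_s^M$ is independent of $S_tM_+$ for $s<t$ (which guarantees that the $\mathcal H_t^M$-compensator of $M$ is still $\mathrm dt\times\mathrm ds\times\mathbb Q(\mathrm dz)$, i.e.\ the extra information in $\mathcal H_t^M$ does not anticipate the Poisson increments), the integral-against-$M$ of any predictable integrand has compensator obtained by integrating against $\mathrm dt\,\mathrm ds\,\mathbb Q(\mathrm dz)$. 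Applying this to $C(t,z)\mathbf 1_{[0,\Lambda(t)]}(s)$ and performing the $s$-integral, $\int_{\mathbb R_+}\mathbf 1_{[0,\Lambda(t)]}(s)\,\mathrm ds=\Lambda(t)$, collapses the expression to $\int_{\mathbb R\times\Omega}C(t,z)\Lambda(t)\,\mathrm dt\,\mathbb Q(\mathrm dz)$, which is exactly the asserted intensity. The martingale statement follows the same way by localizing in time and using that the compensated integral against $M$ is a martingale for bounded predictable integrands.

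The main obstacle — and the step requiring the most care rather than routine computation — is the justification that the $\mathcal H_t^M$-compensator of $M$ remains the deterministic measure $\mathrm dt\times\mathrm ds\times\mathbb Q(\mathrm dz)$ under the possibly larger filtration $\mathcal H_t^M$; this is precisely where the hypothesis ``$\mathcal H_s^M$ independent of $S_tM_+$ for $s<t$'' is used, via the standard fact that a Poisson process with independent increments relative to an enlarged filtration retains its intensity (see e.g.\ \cite[\S14.3]{DaleyVereJones}). A secondary, more technical point is the measurability of the map $(\omega,t)\mapsto h(\cdot)$-type objects entering $\Lambda$, but at the level of this lemma $\Lambda$ is simply assumed predictable, so that burden has been deferred to the separability discussion preceding the statement. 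Once the compensator fact is in hand, the remainder is the bookkeeping of Fubini's theorem and the elementary identity $\int_0^\infty\mathbf 1_{[0,\Lambda(t)]}(s)\,\mathrm ds=\Lambda(t)$, valid because $\Lambda(t)\geq 0$.
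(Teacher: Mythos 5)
The paper does not actually prove Lemma~\ref{Poisson construction}; it is stated without proof and cited as a standard result underlying thinning-based simulation (with pointers to \cite{OgataLewis} and to \cite[Lemma 1]{Massoulie}). Your argument is essentially the canonical proof of this thinning lemma that one finds in, e.g., Brémaud–Massoulié or \cite[\S14.3]{DaleyVereJones}: reduce the intensity claim to the projection/martingale characterization over bounded nonnegative $\mathcal H_t^M$-predictable integrands, substitute the thinning kernel, observe that $C(t,z)\mathbf 1_{[0,\Lambda(t)]}(s)$ is again predictable, invoke that the $\mathcal H_t^M$-compensator of $M$ remains $\mathrm dt\,\mathrm ds\,\mathbb Q(\mathrm dz)$ (this is exactly where the hypothesis that $\mathcal H_s^M$ is independent of $S_tM_+$ for $s<t$ is needed), and finally integrate out the auxiliary coordinate using $\int_0^\infty\mathbf 1_{[0,\Lambda(t)]}(s)\,\mathrm ds=\Lambda(t)$. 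All of these steps are correct, and you have correctly identified the one non-routine ingredient as the preservation of the compensator under the enlarged filtration. Since the paper itself omits the proof, there is no alternative argument to compare against; yours is a complete and appropriate filling-in of the gap, matching the standard route the cited references take.
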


The (lengthy) proof of the next result is postponed until Appendix~\hyperref[app A]{A}.

\begin{theorem}[Existence, uniqueness and stability]\label{stabilityth uni}
Assume that $\phi_i:\mathbb R\to\mathbb R_+$ is $L_i$-Lipschitz for all $i\in[d]$. Suppose that for all $i,j\in[d]$, $h_{ij}(\cdot,\omega)$ is a random function defined on $(\Omega_{ij},\mathcal F_{ij},\mathbb Q_{ij})$, which is separable with respect to the class of open sets, such that $h_{ij}(t)\in L^1(\mathbb Q_{ij})$ for almost all $t\in\mathbb R$, and such that the $d\times d$ matrix $\|\boldsymbol H\|:=(L_i\|\mathbb E|h_{ij}|\|_{L^1})_{i,j\in[d]}$ has spectral radius less than $1$. Then there exists a stationary distribution for a process $\boldsymbol N(\cdot)$ satisfying the dynamics \eqref{intensity nonlinear}. 

In addition, assume that $\|\mathbb E|h_{ij}|\|_\infty<\infty$ for all $i,j\in[d]$. Then this stationary distribution is unique. 
Let 
\begin{equation}
i_c(t)=\sum_{i,j\in[d]}\mathbb E_{h_{ij}}\left[\int_{t-c}^t\int_{(-\infty,0)\times\Omega_{ij}}|h_{ij}(s-\tau,\omega_{ij})|\ N_j(\mathrm d\tau\times\mathrm d\omega_{ij})\ \mathrm ds\right]\label{initial condition2}.
\end{equation} 
Let $M^d$ be a multivariate version of the marked Poisson process from Lemma~\ref{Poisson construction}, i.e., a marked Poisson process on $\mathbb R\times\mathbb R^d_+\times\prod_{i=1}^d\prod_{j=1}^d\Omega_{ji}$ with intensity $\mathrm dt\times\prod_{i=1}^d\left(\mathrm ds_i\times\prod_{j=1}^d\mathbb Q_{ji}(\mathrm d\omega_{ji})\right)$. 
Suppose that $\boldsymbol{N}$ is defined w.r.t.\ $M^d$. 
If (i) for all $c>0$, $\sup_{t\geq0}i_c(t)<\infty$ and $\lim_{t\to\infty}i_c(t)=0$, a.s., or (ii) for all $c>0$,  $\sup_{t\geq0}\mathbb E_{M^d}i_c(t)<\infty$ and $\lim_{t\to\infty}\mathbb E_{M^d}i_c(t)=0$, a.s., then for any $\tilde{\boldsymbol N}$ satisfying \eqref{initial condition2} with dynamics (\ref{intensity nonlinear}) on $\mathbb R_+$, we have $S_t\tilde {\boldsymbol N}_+\stackrel{\mathcal D}\to \boldsymbol N_+$, as $t\to\infty$; i.e., we have stability in distribution.
\end{theorem}

\begin{remark}
Both initial conditions (i) and (ii) say, in different ways, that the influence of the history at time $0$, i.e., the behavior on $(-\infty,0]$, on the future at time $t$, i.e., the behavior on $[t,\infty)$, vanishes, as $t\to\infty$. 
\end{remark}

\begin{remark}
Whereas existence, uniqueness and stability for the three specific processes given in Eqn.\ \eqref{eq:general} is, in principle, already implied by \cite{Massoulie}, our Theorem~\ref{stabilityth uni} above is more explicit. 
To apply \cite{Massoulie}, consider, for example, the univariate delayed Hawkes process. We can define a point process on $\mathbb R\times\mathbb R_+\times\mathbb R_+$, where the coordinates represent time, marks and sojourn times, respectively. 
Then the conditional intensity can be written as $$\Lambda(t,\mathrm db,\mathrm dw)=\psi(S_tN_-)B(\mathrm d b)J(\mathrm dw),$$ where $B$ denotes the mark distribution, $J$ denotes the sojourn time distribution, and $$\psi(S_tN_-)=\phi\left(\int_{(-\infty,t)\times\mathbb R_+\times\mathbb R_+}b h(t-s-w)\mathbf1_{[w,\infty)}(t-s)N(\mathrm ds\times\mathrm db\times\mathrm dw)\right).$$ 
Theorem~\ref{stabilityth uni} may then be compared to \cite[Theorems~2, 4]{Massoulie}: it gives more concrete conditions on $h(\cdot,\omega)$, and allows for a direct proof.
\end{remark}



\section{Scaling limit with stretched sojourn times} \label{scalinglimit}

Having formally introduced the general family of models having sojourn-time dependent excitation, we ask ourselves to what extent members of this family differ, statistically and probabilistically, from the classical Hawkes process. 
In fact, it turns out to be possible to distinguish between a Hawkes and a delayed Hawkes process from observed sample paths using statistical techniques from \cite{GoFpaper}, as we outline in Appendix \hyperref[app B]{B}.

In this section, we approach the problem of distinguishing between a Hawkes process and a delayed Hawkes process probabilistically, by analyzing asymptotic behavior through \textit{scaling limits}.
That is, we look for convergence at process level of some scaled version of the process. 
This convergence is weakly in $D[0,1]$, the space of càdlàg functions on the unit interval, equipped with the Skorokhod $J_1$-topology. 
We consider the linear, univariate case.
In a typical scaling regime, one considers the compensated counting process (see \cite{DaleyVereJones}, \S7.2); one contracts time by a factor $T$; after which one divides by $\sqrt T$.  
This is the quantity studied in a \textit{functional central limit theorem} (FCLT). 

For the unmarked Hawkes process, a scaling limit of this type can be found in  \cite{Bacry}. 
For a univariate model with immigration intensity $\lambda_0$ and excitation function $h$, their results imply that 
\begin{equation}\frac{N(T\cdot)-\mu T\cdot}{\sqrt T}\to\sigma B(\cdot),\label{SLBacry1}
\end{equation} as $T\to\infty$, weakly on $D[0,1]$ equipped with the Skorokhod $J_1$-topology, where $B$ is a standard Brownian motion, and where 
\begin{equation}\mu=\frac{\lambda_0}{1-\|h\|_{L^1}},\quad\sigma^2=\frac{\lambda_0}{(1-\|h\|_{L^1})^3}.\label{SLBacry2}
\end{equation}

On the other hand, for a model having sojourn-time dependent excitation, we can apply an existing FCLT for marked Hawkes random measures, as given in \cite{Horst}, Theorem~3.12. 
Since the scaling limit considers the counting process instead of the population process, we can, as in Section~\ref{existence}, replace the sojourn-time dependency of the random excitation function by general randomness. 
Letting $\mathbb U$ be a Lusin space modeling the randomness of the excitation functions, we use marks $\omega\in\mathbb U$ and excitation functions $h(t,\omega)$. 
It follows from \cite{Horst}, Theorem~3.12, that any two processes with random excitation functions \textit{having the same expected $L^1$-norm} admit the same scaling limit of the FCLT type (i.e., take a compensated process; contract time by a factor $T$; divide by $\sqrt T$). 

In particular, we can compare a Hawkes process to a delayed Hawkes process having the same parameters, corresponding to bivariate marks $\boldsymbol\xi\in\mathbb R_+^2$ whose coordinates represent `actual' mark and sojourn time, respectively, and excitation functions 
$$\phi_{\text{Hawkes}}(t,\boldsymbol \xi)=\xi_1h(t),\quad\mathrm{and}\quad\phi_{\text{delayed}}(t,\boldsymbol \xi)=\xi_1h(t-\xi_2)\mathbf1\{t\geq\xi_2\},$$ 
to infer that they admit the same scaling limit, being the sum of a Gaussian white noise (contributed by the marks) and a correlated Brownian motion, having the same parameters for both models. 
Heuristically, if we contract time, deviations from the mean from the random excitation functions cancel each other out. 
For the delayed Hawkes process, if sojourn times stay the same, but if we contract time by a factor $T$, the delays are of length $J/T$, hence vanish, as $T\to\infty$.


A natural, subsequent question is whether the difference between two processes belonging to the family of processes having sojourn-time dependent excitation can be made visible in some scaling limit. 
To this end, we consider a univariate unmarked delayed Hawkes process with i.i.d.\ sojourn times $(J_i)_i\in\mathbb N$, which we compare to its nondelayed counterpart.
The idea is to consider the compensated process on an interval $[0,T]$ with sojourn times stretched out from $J_i$ to $T^{\alpha}J_i$, for some $\alpha\in[0,1)$, after which we contract time by a factor of $T$, mapping $[0,T]$ onto $[0,1]$. After rescaling our counting process by $T^{-1/2}$ and letting $T\to\infty$, we obtain a nondegenerate limit. 
By taking a low degree of sojourn-time stretching, $0\leq\alpha<\frac12$, we obtain the same scaling limit as given by (\ref{SLBacry1})--(\ref{SLBacry2}), while if we set $\alpha=\frac12$, the effect of the delays becomes visible. 
The case $\alpha>\frac12$, discussed in Remark~\ref{otheralpha} below, is less transparent.

To obtain insight into this scaling limit, we modify the arguments from \cite{Bacry}. 
Let $(N_\alpha^T(v))_{v\in[0,1]}$ be equal to $(N(Tv))_{v\in[0,1]}$, for $N(\cdot)$ the process having sojourn times $(T^\alpha J_i)_{i\in\mathbb N}$. 
Those sojourn times correspond to the increasing sequence of arrival times $(\tau_i)_{i\in\mathbb N}$, where it is assumed that $J_i$ is drawn at time $\tau_i$. 
This process $N_\alpha^T(\cdot)$ has an arrival intensity $\Lambda_\alpha^T(\cdot)$ given by 
\begin{equation}
\Lambda_\alpha^T(v)=\left(\lambda_0+\sum_{\tau_i<t}h(Tv-\tau_i-T^\alpha J_i)\mathbf1\{Tv\geq \tau_i+T^\alpha J_i\}\right)\cdot T.
\end{equation}

To derive our scaling limit, we impose the following three assumptions. For $\alpha\leq\frac12$, \begin{align*}\|h\|_{L^1}&<1\tag{A1},\\\int_0^\infty t^{\frac1{2(1-\alpha)}}h(t)\ \mathrm dt&<\infty,\tag{A2}\\\mathbb E[J]&<\infty.\tag{A3}
\end{align*} 
We assume (A1)--(A2) throughout this section, while we only need (A3) for $\alpha=\frac12$.

In the following, we use the function $\bar h^T(\cdot)$, which can be seen as an average of $h$ over the past, weighed according to the stretched sojourn times: \begin{equation}\bar h^T(t):=\int_0^{T^{-\alpha}t}h(t-T^\alpha w) \ \mathrm d\bar{\mathscr J}(w).\end{equation}   
We also define \begin{equation}\mathscr H^T:=\sum_{k\geq1}(\bar h^T)^{*k},\end{equation} where $*k$ denotes $k$-fold convolution. 
In the sequel, we suppress the $\alpha$-dependence in the notations $\bar h^T$ and $\mathscr H^T$ to make our notation more compact; the value of $\alpha$ will be clear from the context.
Note that for any $T>0$, $\alpha\in[0,1)$,
\begin{align}
\nonumber\|\bar h^T\|_{L^1}&=\int_0^\infty\int_0^{T^{-\alpha}t}h(t-T^\alpha w) \ \mathrm d\bar{\mathscr J}(w)\ \mathrm dt=\int_0^\infty\int_{T^\alpha w}^\infty h(t-T^\alpha w) \ \mathrm dt\ \mathrm d\bar{\mathscr J}(w)\\&=\int_0^\infty\int_0^\infty h(t)\ \mathrm dt\ \mathrm d\bar{\mathscr J}(w)=\|h\|_{L^1},
\label{h switching order int}
\end{align} 
and therefore, using that $\|(\bar h^T)^{*k}\|_{L^1}=\|\bar h^T\|_{L^1}^k$, as can easily be proved by induction, 
$$\|\mathscr H^T\|_{L^1}=\sum_{k\geq1}\|(\bar h^T)^{*k}\|_{L^1}=\sum_{k\geq1}\|h\|^k_{L^1}=\frac{\|h\|_{L^1}}{1-\|h\|_{L^1}}<\infty.$$

For the next lemmas, we define a process $\tilde N^T_\alpha$, which is `simply' a delayed Hawkes process with sojourn times stretched out by a factor $T^\alpha$. 
We emphasize that we do \textit{not} contract time, yet.  
The next three lemmas can be seen as suitable counterparts of \cite{Bacry}, Lemmas~2, 4 and 5, respectively.
\begin{lemma}\label{BacryL2}
Let $\alpha\in[0,1)$, $T,t\geq0$. 
For each a.s.\ finite stopping time $S$, we have \begin{align}
\mathbb E[\tilde N_\alpha^T(S)]&=\lambda_0\mathbb E[S]+\mathbb E\left[\int_0^S\bar h^T(S-t)\tilde N_\alpha^T(t)\ \mathrm dt\right],\\\mathbb E[\tilde N_\alpha^T(S)]&\leq\mu\,\mathbb E[S].
\end{align}
\end{lemma}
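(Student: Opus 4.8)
\textit{Proof proposal.} The plan is to first secure an a priori finiteness estimate---which legitimises all later interchanges of expectation and integration---then read off the crude bound from the compensator identity, and finally prove the exact renewal identity by a mark-averaging argument followed by a Stieltjes integration by parts, upgrading from deterministic times to stopping times by localisation. For finiteness I would use the cluster representation: the immigrants in $[0,t]$ have $\mathrm{Pois}(\lambda_0 t)$ cardinality, and the cluster spawned by any event is---irrespective of the stretched delays, since an event generates a $\mathrm{Pois}(\|h\|_{L^1})$ number of offspring whatever its delay---a subcritical Galton--Watson tree of expected total size $\sum_{k\geq0}\|h\|_{L^1}^{k}=(1-\|h\|_{L^1})^{-1}$. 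As every descendant arrives strictly after its ancestor, $\tilde N_\alpha^T(t)$ is dominated by the sum of the cluster sizes of the immigrants in $[0,t]$, and Wald's identity gives $\mathbb E[\tilde N_\alpha^T(t)]\leq\lambda_0 t/(1-\|h\|_{L^1})<\infty$.

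Write $\tilde\Lambda_\alpha^T$ for the (non-contracted) arrival intensity of $\tilde N_\alpha^T$, that is, $\tilde\Lambda_\alpha^T(s)=\lambda_0+\sum_{\tau_i<s}h(s-\tau_i-T^\alpha J_i)\mathbf 1\{s\geq\tau_i+T^\alpha J_i\}$, and put $H(x)=\int_0^{x}h(r)\,\mathrm dr$. By the finiteness estimate $\tilde N_\alpha^T(\cdot)-\int_0^{\cdot}\tilde\Lambda_\alpha^T(s)\,\mathrm ds$ is a genuine martingale on every bounded interval, so optional stopping at $S\wedge n$ together with monotone convergence in $n$ yields $\mathbb E[\tilde N_\alpha^T(S)]=\mathbb E[\int_0^{S}\tilde\Lambda_\alpha^T(s)\,\mathrm ds]$. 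Tonelli exchanges the time integral with the point measure, giving $\int_0^{S}(\tilde\Lambda_\alpha^T(s)-\lambda_0)\,\mathrm ds=\sum_{\tau_i<S}H((S-\tau_i-T^\alpha J_i)^{+})\leq\|h\|_{L^1}\,\tilde N_\alpha^T(S)$, using $H\leq\|h\|_{L^1}$. Running this at level $S\wedge n$, rearranging with the help of $\mathbb E[\tilde N_\alpha^T(S\wedge n)]<\infty$, and letting $n\to\infty$ delivers the second assertion $\mathbb E[\tilde N_\alpha^T(S)]\leq\lambda_0\mathbb E[S]/(1-\|h\|_{L^1})$ (equivalently, this invokes $\|\bar h^T\|_{L^1}=\|h\|_{L^1}$ from (\ref{h switching order int})).

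For the first, exact identity the idea is to replace each summand $H((S-\tau_i-T^\alpha J_i)^{+})$ by its average over the service requirement, $\bar H^T(S-\tau_i)$ with $\bar H^T(x):=\int_0^{T^{-\alpha}x}H(x-T^\alpha w)\,\mathrm d\bar{\mathscr J}(w)=\int_0^{x}\bar h^T(r)\,\mathrm dr$, and then to undo the convolution by parts. When $S=t$ is deterministic this is immediate: $g(s,w)=\mathbf 1\{s<t\}\,H((t-s-T^\alpha w)^{+})$ is a deterministic, hence predictable, integrand, so the Campbell formula for the marked point process $\sum_i\delta_{(\tau_i,J_i)}$---whose compensator is $\tilde\Lambda_\alpha^T(s)\,\mathrm ds\,\mathrm d\bar{\mathscr J}(w)$---gives $\mathbb E[\sum_{\tau_i<t}H((t-\tau_i-T^\alpha J_i)^{+})]=\mathbb E[\int_0^{t}\bar H^T(t-s)\,\tilde\Lambda_\alpha^T(s)\,\mathrm ds]$; since $\mathbb E[\tilde\Lambda_\alpha^T(s)]=\tfrac{\mathrm d}{\mathrm ds}\mathbb E[\tilde N_\alpha^T(s)]$ and $\bar H^T(0)=0=\tilde N_\alpha^T(0)$, integration by parts in the convolution turns this into $\mathbb E[\tilde N_\alpha^T(t)]=\lambda_0 t+\mathbb E[\int_0^{t}\bar h^T(t-u)\,\tilde N_\alpha^T(u)\,\mathrm du]$. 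For a general a.s.\ finite stopping time I would start instead from $\mathbb E[\tilde N_\alpha^T(S)]=\lambda_0\mathbb E[S]+\mathbb E[\sum_{\tau_i<S}H((S-\tau_i-T^\alpha J_i)^{+})]$, argue termwise that, conditionally on the information available just before $\tau_i$ (together with $\tau_i$ itself), $J_i$ is distributed as $\bar{\mathscr J}$, so the conditional expectation of the $i$-th summand equals $\bar H^T(S-\tau_i)$, recognise $\sum_{\tau_i<S}\bar H^T(S-\tau_i)=\int_{(0,S)}\bar H^T(S-u)\,\mathrm d\tilde N_\alpha^T(u)$, and conclude with the same Stieltjes integration by parts.

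The principal obstacle I anticipate is precisely this mark-averaging with a \emph{random} upper limit. For deterministic times the Campbell formula applies to a predictable integrand and nothing has to be checked, but for a stopping time $\mathbf 1\{s<S\}\,H((S-s-T^\alpha w)^{+})$ is not predictable, so the compensator formula cannot be invoked verbatim; the passage from $J_i$ to its average has to be justified by hand---conditioning on the $\sigma$-algebra strictly before $\tau_i$ and controlling how $S$ may depend on the marks, e.g.\ by localising $S$ by $S\wedge n$ and by the successive jump times so that excitation generated past the localisation point cannot feed back into the estimate. The only other delicate point is the circular-looking requirement $\mathbb E[\tilde N_\alpha^T(t)]<\infty$ needed before any Fubini step, which the branching estimate of the first paragraph removes once and for all.
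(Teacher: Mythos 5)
Your proposal follows essentially the same route as the paper, which simply adapts the proof of \cite[Lemma 2]{Bacry}: localise, apply the compensator identity at the localised time, swap the order of integration, and integrate by parts. Your alternative a~priori finiteness argument via the cluster representation and Wald's identity is valid, but unnecessary if one localises by $S_p:=S\wedge p\wedge\inf\{t:\tilde N_\alpha^T(t)\geq p\}$ as in \cite{Bacry}, which gives $\tilde N_\alpha^T(S_p)\leq p$ for free; the crude bound then follows exactly as you say from the pointwise inequality $\int_0^{S_p}(\tilde\Lambda_\alpha^T(s)-\lambda_0)\,\mathrm ds\leq\|h\|_{L^1}\,\tilde N_\alpha^T(S_p)$, rearranging, and sending $p\to\infty$ by monotone convergence. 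Where you are more careful than the written proof is precisely the mark-averaging step. The paper's displayed equality already replaces the realised service requirements $J_i$ by an integral against $\bar{\mathscr J}$, i.e.\ it takes for granted that, with $H(x):=\int_0^x h(r)\,\mathrm dr$ and $\bar H^T(x):=\int_0^x\bar h^T(r)\,\mathrm dr$,
\[
\mathbb E\Biggl[\sum_{\tau_i<S_p}H\bigl((S_p-\tau_i-T^\alpha J_i)^+\bigr)\Biggr]=\mathbb E\Biggl[\sum_{\tau_i<S_p}\bar H^T(S_p-\tau_i)\Biggr].
\]
For deterministic $S_p=t$ this is the Campbell formula for the marked point process with compensator $\tilde\Lambda_\alpha^T(s)\,\mathrm ds\,\bar{\mathscr J}(\mathrm dw)$ applied to the predictable integrand $(s,w)\mapsto H((t-s-T^\alpha w)^+)$, exactly as you set it up, and deterministic times are all that the paper actually uses downstream (Lemma~\ref{BacryL4} invokes Lemma~\ref{BacryL2} only at fixed $t$). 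For a genuinely random stopping time the integrand $\mathbf 1\{s<S_p\}H((S_p-s-T^\alpha w)^+)$ is optional but not predictable, and your suspicion that the termwise conditioning on $\mathcal F_{\tau_i-}$ is not legitimate is well founded: $S_p$ may depend on $J_i$, so $\mathbb E[H((S_p-\tau_i-T^\alpha J_i)^+)\mid\mathcal F_{\tau_i-},\tau_i]\neq\bar H^T(S_p-\tau_i)$ in general. The paper's one-line proof glosses over this point as well; closing it in full generality would require a predictable-projection or monotone-class argument, or else one should simply state the lemma for deterministic horizons, which suffices for everything that follows.
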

\begin{proof}
The proof is a modification of the proof of \cite{Bacry}, Lemma~2. 
Their first display would read $$\mathbb E[\tilde N_{\alpha}^T(S_p)]=\lambda_0\mathbb E[S_p]+\mathbb E\left[\int_0^{S_p}\int_0^{T^{-\alpha }t}\int_0^{t-T^\alpha w}h(t-s-T^\alpha w)\ \mathrm d\tilde N_\alpha^T(s)\ \mathrm d\bar{\mathscr J}(w)\ \mathrm dt\right],$$ 
after which it comes down to performing calculations similar to the ones performed in Eqn.~\eqref{h switching order int}. 
\end{proof}
 
Now consider the martingale $\tilde M_\alpha^T(t)=\tilde N_\alpha^T(t)-\int_0^t\tilde\Lambda_\alpha^T(s)\ \mathrm ds$, where $\tilde\Lambda_\alpha^T$ denotes the arrival intensity of $\tilde N_\alpha^T$. 
The next lemma can be derived from Lemma~\ref{BacryL2} in the same way as \cite{Bacry}, Lemma~4 is derived from \cite{Bacry}, Lemma~2; we should replace their $\varphi$ by our $\bar h^T$ and their $\psi$ by our $\mathscr H^T$. 
 
\begin{lemma}\label{BacryL4}
Let $\alpha\in[0,1)$, $T,t\geq0$. Then it holds that 
\begin{align}
\mathbb E[\tilde N_\alpha^T(t)]&=\lambda_0t+\lambda_0\int_0^t\mathscr H^T(t-s)s\ \mathrm ds\label{Bacry13},\\
\tilde N_\alpha^T(t)-\mathbb  E[\tilde N_\alpha^T(t)]&=\tilde M_\alpha^T(t)+\int_0^t\mathscr H^T(t-s)\tilde M_\alpha^T(s)\ \mathrm ds.\label{Bacry14}
\end{align}
\end{lemma}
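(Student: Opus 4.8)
The plan is to follow the strategy by which \cite[Lemma 4]{Bacry} is deduced from \cite[Lemma 2]{Bacry}, here substituting $\bar h^T$ for the excitation kernel $\varphi$ and $\mathscr H^T=\sum_{k\geq1}(\bar h^T)^{*k}$ for the renewal-type kernel $\psi$. The two displayed identities are of different natures: \eqref{Bacry13} is a deterministic renewal equation for the mean function $m(t):=\mathbb E[\tilde N_\alpha^T(t)]$, whereas \eqref{Bacry14} is a pathwise identity expressing the centred process in terms of the martingale $\tilde M_\alpha^T$. I will treat them in that order.

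For \eqref{Bacry13}, I would start from the first identity of Lemma \ref{BacryL2}, applied to the deterministic (hence trivially a.s.\ finite) stopping time $S\equiv t$, together with Fubini's theorem to pull the expectation inside the $\mathrm ds$-integral; this gives the renewal equation
\begin{equation*}
m(t)=\lambda_0 t+\int_0^t\bar h^T(t-s)\,m(s)\,\mathrm ds.
\end{equation*}
Since $\|\bar h^T\|_{L^1}=\|h\|_{L^1}<1$ by \eqref{h switching order int}, this Volterra equation of the second kind has a unique locally bounded solution, obtained by Picard/Neumann iteration: $m=\sum_{k\geq0}(\bar h^T)^{*k}*(\lambda_0\,\mathrm{id})$, where $\mathrm{id}(t)=t$ and $(\bar h^T)^{*0}*g:=g$. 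Separating the $k=0$ term yields exactly $m(t)=\lambda_0 t+\lambda_0\int_0^t\mathscr H^T(t-s)\,s\,\mathrm ds$; convergence of the series in $L^1_{\mathrm{loc}}$ is guaranteed by $\|\mathscr H^T\|_{L^1}=\|h\|_{L^1}/(1-\|h\|_{L^1})<\infty$, already established in the excerpt, and local boundedness of $m$ follows from the second bound in Lemma \ref{BacryL2}.

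For \eqref{Bacry14}, I would write $\tilde N_\alpha^T(t)=\tilde M_\alpha^T(t)+\int_0^t\tilde\Lambda_\alpha^T(s)\,\mathrm ds$ by definition of the compensated martingale, and then re-express the integrated intensity. Because the service time $J_i$ is drawn at the arrival instant $\tau_i$, one has $\int_0^t\tilde\Lambda_\alpha^T(s)\,\mathrm ds=\lambda_0 t+\int_0^t\bar h^T(t-s)\,\tilde N_\alpha^T(s)\,\mathrm ds$ after switching the order of integration exactly as in \eqref{h switching order int} (integrating out the service-time variable $w$ against $\bar{\mathscr J}$). Subtracting the renewal equation for $m(t)$ proved above gives
\begin{equation*}
\tilde N_\alpha^T(t)-m(t)=\tilde M_\alpha^T(t)+\int_0^t\bar h^T(t-s)\bigl(\tilde N_\alpha^T(s)-m(s)\bigr)\,\mathrm ds,
\end{equation*}
i.e.\ the centred process $Y(t):=\tilde N_\alpha^T(t)-m(t)$ solves the renewal equation $Y=\tilde M_\alpha^T+\bar h^T*Y$. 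Resolving this equation by the same Neumann-series inversion (valid pathwise since $\tilde M_\alpha^T$ is a.s.\ locally bounded, being a difference of a counting process and a locally integrable compensator) gives $Y=\tilde M_\alpha^T+\bigl(\sum_{k\geq1}(\bar h^T)^{*k}\bigr)*\tilde M_\alpha^T=\tilde M_\alpha^T+\mathscr H^T*\tilde M_\alpha^T$, which is \eqref{Bacry14}.

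The only genuinely delicate point is the interchange-of-integration step that rewrites the integrated intensity in terms of $\bar h^T$: one must be careful that the indicator $\mathbf 1\{Tv\ge\tau_i+T^\alpha J_i\}$ (equivalently $\mathbf 1\{t-s\ge T^\alpha w\}$ after the substitution) is handled correctly when swapping the $\mathrm dt$ (or $\mathrm ds$) and $\mathrm d\bar{\mathscr J}(w)$ integrals, and that the resulting double integral is absolutely convergent so that Fubini applies — this is where assumption (A1) (through $\|\bar h^T\|_{L^1}=\|h\|_{L^1}<\infty$) and the bound $\mathbb E[\tilde N_\alpha^T(t)]<\infty$ from Lemma \ref{BacryL2} are used. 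Everything else is the standard resolvent inversion of a sub-critical renewal equation, identical in form to \cite[Lemma 4]{Bacry}.
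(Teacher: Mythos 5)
Your proof of \eqref{Bacry13} is fine and follows the paper's indicated route: apply Lemma \ref{BacryL2} with $S\equiv t$, deduce the renewal equation $m(t)=\lambda_0 t+\int_0^t\bar h^T(t-s)m(s)\,\mathrm ds$ for $m(t)=\mathbb E[\tilde N_\alpha^T(t)]$, and invert it by the Neumann series using $\|\bar h^T\|_{L^1}=\|h\|_{L^1}<1$.

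For \eqref{Bacry14}, however, the central step is not correct. You claim the \emph{pathwise} identity $\int_0^t\tilde\Lambda_\alpha^T(s)\,\mathrm ds=\lambda_0 t+\int_0^t\bar h^T(t-s)\,\tilde N_\alpha^T(s)\,\mathrm ds$ by ``integrating out $w$ against $\bar{\mathscr J}$'', but that operation is an expectation, not a Fubini swap of iterated Lebesgue integrals. Writing $\Phi(x)=\int_0^{x\vee 0}h$, a direct computation gives $\int_0^t\tilde\Lambda_\alpha^T(s)\,\mathrm ds=\lambda_0 t+\sum_{\tau_i<t}\Phi(t-\tau_i-T^\alpha J_i)$, which depends on the \emph{realised} service requirements $J_i$, whereas $\int_0^t\bar h^T(t-s)\tilde N_\alpha^T(s)\,\mathrm ds=\sum_{\tau_i<t}\mathbb E_J[\Phi(t-\tau_i-T^\alpha J)]$ replaces each $J_i$ by a fresh average over $J\sim\bar{\mathscr J}$; these coincide only if $J$ is deterministic. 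Consequently $Y:=\tilde N_\alpha^T-m$ does not solve $Y=\tilde M_\alpha^T+\bar h^T*Y$; the true pathwise equation is $Y=\hat M+\bar h^T*Y$ with $\hat M:=\tilde M_\alpha^T+\tilde K$, where $\tilde K(t):=\sum_{\tau_i<t}\bigl[\Phi(t-\tau_i-T^\alpha J_i)-\mathbb E_J\Phi(t-\tau_i-T^\alpha J)\bigr]$ is mean-zero but not identically zero. To repair the argument one must either state \eqref{Bacry14} with $\hat M$ in place of $\tilde M_\alpha^T$ and check that $\hat M$ retains enough martingale structure for the downstream FCLT, or carry $\tilde K$ as an explicit error term and verify it is $o(\sqrt T)$ (its nonzero summands are concentrated on arrivals within an $O(T^\alpha)$ window before $t$ and are conditionally centred at each arrival, so $\tilde K(t)/\sqrt T\to 0$ in probability for $\alpha<1$). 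The paper's one-line sketch inherits the same gap: the analogous identity in \cite{Bacry} is genuinely pathwise because the unmarked Hawkes process has no service-time randomness to average out, so the stated analogy does not carry over verbatim.
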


Define
\begin{align}\bar \sigma:= \frac{\lambda_0\mathbb E[J]\|h\|_{L^1}}{(1-\|h\|_{L^1})^2}.\end{align}

\begin{lemma}\label{BacryL5}
Let $\alpha\in[0,1)$, let $p\in[0,1]$ and assume that $\int_0^\infty t^ph(t)\ \mathrm dt<\infty$. Let $\epsilon\in(0,1)$. Then:
\begin{itemize}
\item If $p<1$, then $\displaystyle T^{(1-\alpha)p}\left(T^{-1}\mathbb E[N_\alpha^T(v)]-\mu v\right)\to0$,\: as $T\to\infty$, uniformly in $v\in[0,1]$.
\item If $p=1$, then $\displaystyle T^{1-\alpha}\left(T^{-1}\mathbb E[N_\alpha^T(v)]-\mu v\right)\to-\bar\sigma $,\: as $T\to\infty$, uniformly in $v\in[\epsilon,1]$.
\end{itemize}
\end{lemma}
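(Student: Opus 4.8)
\textit{Proof plan.} The natural starting point is the exact mean formula (\ref{Bacry13}) of Lemma~\ref{BacryL4}. Since $N_\alpha^T(v)=\tilde N_\alpha^T(Tv)$, substituting $t=Tv$ there, dividing by $T$, changing variables $u=Tv-s$ in the convolution integral, and using the identities $\|\bar h^T\|_{L^1}=\|h\|_{L^1}$ and $\|\mathscr H^T\|_{L^1}=\|h\|_{L^1}/(1-\|h\|_{L^1})$ established above to absorb the leading contribution, one is left with the clean identity
\[
T^{-1}\mathbb E[N_\alpha^T(v)]-\frac{\lambda_0 v}{1-\|h\|_{L^1}}=-\lambda_0 v\int_{Tv}^\infty\mathscr H^T(u)\,\mathrm du-\frac{\lambda_0}{T}\int_0^{Tv}u\,\mathscr H^T(u)\,\mathrm du .
\]
Everything then reduces to estimating, after multiplication by $T^{(1-\alpha)p}$, the tail mass $\int_{Tv}^\infty\mathscr H^T$ and the truncated first moment $\int_0^{Tv}u\,\mathscr H^T$ of the renewal-type function $\mathscr H^T$, uniformly in $v$. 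For this I would exploit the representation $\bar h^T=h*\mu_T$, where $\mu_T$ is the law of $T^\alpha J$ (this is precisely $\bar h^T$ after the substitution $w\mapsto T^\alpha w$), so that $(\bar h^T)^{*k}=\|h\|_{L^1}^k\,g_k^T$ with $g_k^T$ the density of $\Sigma_k+T^\alpha R_k$, where $\Sigma_k$ is a sum of $k$ i.i.d.\ copies of a variable with density $h/\|h\|_{L^1}$ and $R_k$ an independent sum of $k$ i.i.d.\ copies of $J$. Consequently $\int_b^\infty\mathscr H^T=\sum_{k\ge1}\|h\|_{L^1}^k\,\mathbb P(\Sigma_k+T^\alpha R_k>b)$, with an analogous expression for the truncated moment.

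For $p=1$ (so $\alpha=\tfrac12$, and $\mathbb E[J]<\infty$ by (A3), $m_1(h):=\int_0^\infty t\,h(t)\,\mathrm dt<\infty$ by (A2)), I would first compute $m_1(\mathscr H^T)$ exactly. From $m_1(f*\nu)=m_1(f)+\|f\|_{L^1}m_1(\nu)$ one gets $m_1(\bar h^T)=m_1(h)+T^{1/2}\mathbb E[J]\|h\|_{L^1}$, and then $m_1(\mathscr H^T)=\sum_{k\ge1}k\|h\|_{L^1}^{k-1}m_1(\bar h^T)=m_1(\bar h^T)/(1-\|h\|_{L^1})^2$. Writing $\int_0^{Tv}u\,\mathscr H^T=m_1(\mathscr H^T)-\int_{Tv}^\infty u\,\mathscr H^T$ in the identity above and multiplying by $T^{1/2}$, the piece $-\lambda_0 T^{-1/2}m_1(\mathscr H^T)$ produces exactly $-\lambda_0\big(T^{-1/2}m_1(h)+\mathbb E[J]\|h\|_{L^1}\big)/(1-\|h\|_{L^1})^2\to-\lambda_0\mathbb E[J]\|h\|_{L^1}/(1-\|h\|_{L^1})^2$, which is the asserted constant. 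It then remains to show $vT^{1/2}\int_{Tv}^\infty\mathscr H^T\to0$ and $T^{-1/2}\int_{Tv}^\infty u\,\mathscr H^T\to0$, uniformly on $[\epsilon,1]$. Using $\{\Sigma_k+T^{1/2}R_k>Tv\}\subseteq\{\Sigma_k>Tv/2\}\cup\{R_k>T^{1/2}v/2\}$, the $\Sigma_k$-contributions are controlled by Markov's inequality with the first moment (summable against $\sum_k k\|h\|_{L^1}^k<\infty$, giving bounds that are $o(1)$ uniformly on $[\epsilon,1]$), and the $R_k$-contributions by dominated convergence, using that $a\mathbb P(R_k>a)\to0$ and $\mathbb E[R_k\mathbf 1\{R_k>a\}]\to0$ as $a=T^{1/2}v/2\to\infty$ (uniformly for $v\in[\epsilon,1]$), each dominated by $\mathbb E[R_k]=k\mathbb E[J]$, which is summable against $\|h\|_{L^1}^k$.

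For $p<1$ the target limit is $0$, so it suffices to bound the two nonnegative terms of the identity by $o(T^{-(1-\alpha)p})$ uniformly on $[0,1]$. Here I would split the $v$-range: for $v$ below a slowly shrinking threshold $v_0(T)$ (for instance $v_0(T)=T^{-(1-\alpha)p}/\log T$), the crude estimate $\big|T^{-1}\mathbb E[N_\alpha^T(v)]-\lambda_0 v/(1-\|h\|_{L^1})\big|\le 2\lambda_0 v\|\mathscr H^T\|_{L^1}$ already yields $T^{(1-\alpha)p}\cdot(\text{LHS})=O(1/\log T)$; for $v\ge v_0(T)$ both thresholds $Tv$ and $T^{1-\alpha}v$ tend to infinity, and one uses the representation above, bounding $\mathbb P(\Sigma_k+T^\alpha R_k>Tv)$ by $\mathbb P(\Sigma_k>Tv/2)+\mathbb P(R_k>T^{1-\alpha}v/2)$ and the truncated moment by $\sum_k\|h\|_{L^1}^k\big(\mathbb E[\Sigma_k^p](Tv)^{1-p}+T^\alpha\mathbb E[\min(R_k,T^{1-\alpha}v)]\big)$, with $\mathbb E[\Sigma_k^p]\le k\,m_p(h)/\|h\|_{L^1}$ finite by the hypothesis $\int t^p h<\infty$. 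The $\Sigma_k$-contributions vanish uniformly by dominated convergence after extracting the factor $a^p\mathbb P(\Sigma_k>a/2)\to0$; the delicate part is the $R_k$-contribution, where one must exploit that the delay mass of $\mathscr H^T$ sits at scale $T^\alpha\ll Tv$ — for $\alpha=\tfrac12$ this is supplied by (A3), while for smaller $\alpha$ one invokes $\mathbb E[\min(J,A)]=o(A)$ together with the gap between the exponents $(1-\alpha)p$ and $1-\alpha$.

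The main obstacle is exactly this uniform-in-$v$ control of the tails of $\mathscr H^T$ over the whole interval, and in particular reconciling the regime $v\downarrow0$ with $T\to\infty$. The $\Sigma_k$-part is routine (Markov plus dominated convergence), but the delay part of $\mathscr H^T$ carries mass out to scale $T^\alpha$, so one must show that, after the relevant power of $T$, the mass it places beyond $Tv$ is negligible uniformly; securing this under the hypotheses at hand is the crux and is what forces the two-regime split in $v$ and the careful bookkeeping of exponents.
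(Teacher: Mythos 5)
Your plan starts from exactly the same exact-mean identity derived from Lemma~\ref{BacryL4} that the paper uses, and the same two-term decomposition into a tail integral $\int_{Tv}^\infty\mathscr H^T$ and a truncated first moment $T^{-1}\int_0^{Tv}u\,\mathscr H^T(u)\,\mathrm du$. For $p=1$ you extract the limiting constant from the exact first moment $m_1(\mathscr H^T)=\bigl(m_1(h)+T^\alpha\mathbb E[J]\|h\|_{L^1}\bigr)/(1-\|h\|_{L^1})^2$, which is precisely the paper's equality case of (\ref{equality H^t})--(\ref{bound H^t}). So the overall strategy is the same; what differs is the way you control the two remainder terms. The paper works deterministically: it bounds $\int_{Tv}^\infty\mathscr H^T\le(Tv)^{-p}\int_{Tv}^\infty s^p\mathscr H^T$ and refers to the integration-by-parts step from \cite[Lemma 5]{Bacry} for the truncated moment. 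You instead expand $\mathscr H^T=\sum_k\|h\|_{L^1}^k g_k^T$ with $g_k^T$ the density of $\Sigma_k+T^\alpha R_k$ and use union bounds, Markov, and dominated convergence. This is a legitimate alternative, and for $p=1$ on $[\epsilon,1]$ your probabilistic route is actually more explicit than the paper's terse ``by invoking (\ref{bound h^t}) and (\ref{bound H^t})''; it makes visible why uniform integrability of $J$ drives the tail estimates.

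Where your argument is incomplete is exactly where you flag the difficulty: the $R_k$-contribution to the truncated moment for $p<1$. Writing $a=T^{1-\alpha}$, the relevant quantity is $a^{p-1}\,\mathbb E\!\left[\min(R_k,a)\right]$. The ingredient you propose, $\mathbb E[\min(J,A)]=o(A)$, only yields $a^{p-1}\cdot o(a)=o(a^p)$, and $o(a^p)$ need not vanish for $p>0$: for instance if $\mathbb P(J>u)\sim cu^{-p}$ one has $\mathbb E[\min(J,a)]\sim\frac{c}{1-p}a^{1-p}$ and $a^{p-1}\mathbb E[\min(J,a)]\to\frac{c}{1-p}>0$. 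To close this you need some moment condition on $J$, e.g.\ $\mathbb E[J^{p'}]<\infty$ for some $p'>p$ (or simply $\mathbb E[J]<\infty$). Note, however, that the paper's own derivation of (\ref{bound h^t}) also tacitly invokes $\mathbb E[J]<\infty$ (it bounds $\mathbb E[J^p]\le 1+\mathbb E[J]$), despite the earlier remark that (A3) is only needed when $\alpha=\tfrac12$. So this is a shared hypothesis issue rather than a defect specific to your route; if you state $\mathbb E[J]<\infty$ (or a suitable fractional moment of $J$) as standing, the $R_k$-contribution is bounded by $\sum_k\|h\|_{L^1}^k k\mathbb E[J]\cdot T^{(1-\alpha)(p-1)}\to0$ and your proof goes through.
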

\begin{proof}
First, we calculate 
\begin{align}
\int_0^\infty t^p\bar h^T(t)\ \mathrm dt&=\int_0^\infty\int_0^{tT^{-\alpha}} h(t-T^\alpha w)\ \mathrm d\bar{\mathscr J}(w)\ \mathrm dt=\int_0^\infty\int_{wT^\alpha}^\infty t^ph(t-T^\alpha w)\ \mathrm dt\ \mathrm d\bar{\mathscr J}(w)\nonumber\\
&=\int_0^\infty\int_0^\infty(t+T^\alpha w)^ph(t)\ \mathrm dt\ \mathrm d\bar{\mathscr J}(w)\leq\int_0^\infty\int_0^\infty(t^p+T^{\alpha p} w^p)h(t)\ \mathrm dt\ \mathrm d\bar{\mathscr J}(w)\nonumber\\
&\leq \int_0^\infty t^ph(t)\ \mathrm dt+T^{\alpha p}(1+\mathbb E[J])\|h\|_{L^1},\label{bound h^t}
\end{align} 
where for $p=1$ we find the equality 
\begin{equation}
\int_0^\infty t\bar h^T(t)\ \mathrm dt=\int_0^\infty th(t)\ \mathrm dt+T^\alpha\mathbb E[J]\|h\|_{L^1}.\label{equality H^t}
\end{equation}

Next, as in the proof of \cite{Bacry}, Lemma~5, we find that 
\begin{equation}
\int_0^\infty t^p\mathscr H^T(t)\ \mathrm dt\leq\frac{\int_0^\infty t^p\bar h^T(t)\ \mathrm dt}{(1-\|h\|_{L^1})^2},
\label{bound H^t}
\end{equation}
again with equality if $p=1$.

Now, consider a fixed $T>0$, and scale the process $\tilde N_\alpha^T$ from Lemma~\ref{BacryL4} to $N_\alpha^T$ by contracting time by a factor $T$. 
Using \eqref{Bacry13}, it now follows that 
\begin{align}
\frac{\lambda_0}{1-\|h\|_{L^1}}v-T^{-1}\mathbb E[N^T_\alpha(v)]=\lambda_0v\int_{Tv}^\infty\mathscr H^T(s)\ \mathrm ds+\lambda_0T^{-1}\int_0^{Tv}s\mathscr H^T(s)\ \mathrm ds. \label{scalinglimit difference}
\end{align}
We bound $T^{(1-\alpha)p}$ times the first term (ignoring $\lambda_0$) from \eqref{scalinglimit difference} by
\begin{align}
vT^{(1-\alpha)p}\int_{Tv}^\infty\mathscr H^T(s)\ \mathrm ds=v^{1-p}T^{-\alpha p}\int_{Tv}^\infty(Tv)^p\mathscr H^T(s)\ \mathrm ds\leq v^{1-p}T^{-\alpha p}\int_{Tv}^\infty s^p\mathscr H^T(s)\ \mathrm ds,
\end{align} 
which converges to $0$ as $T\to\infty$, by invoking the bounds found in \eqref{bound h^t} and in \eqref{bound H^t}. 
The convergence is uniform in $v\in[0,1]$ in case $p<1$, while the convergence is uniform on $[\epsilon,1]$ (for any $\epsilon\in(0,1)$) in case $p=1$. 

Next, we consider the second term from \eqref{scalinglimit difference}. 
Suppose first that $p<1$. 
Since $T^{(1-\alpha)p}$ times the second term can be bounded by 
\begin{equation}
\lambda_0T^{-(1-(1-\alpha)p)}\int_0^Ts\mathscr H^T(s)\ \mathrm ds,\label{second term bound}
\end{equation}
to prove convergence to $0$, uniformly in $v\in[0,1]$, 
it suffices to prove that \eqref{second term bound} converges to 0. 
This can be proved in the same way as in \cite{Bacry}, Lemma~5, applying integration by parts to $G(t)=\int_0^ts^{(1-\alpha)p}\mathscr H^T(s)\ \mathrm ds$.

Suppose now that $p=1$. 
Using \eqref{equality H^t} and \eqref{bound H^t}, it follows that 
\begin{equation}
T^{1-\alpha}T^{-1}\int_0^{Tv}s\mathscr H^T(s)\ \mathrm ds\to \frac{\mathbb E[J]\|h\|_{L^1}}{(1-\|h\|_{L^1})^2},
\end{equation} 
as $T\to\infty$, uniformly in $v\in[\epsilon,1]$. 
For the last limit, we really need $\alpha<1$; otherwise the stretching factors are of the same order as the limit of integration $Tv$. 
The result follows.
\end{proof}

We are now equipped to establish an FLLN for $\left(N_\alpha^T(\cdot)\right)_{T\geq0}$. 
We only state a version for $L^2(\mathbb P)$-convergence, since that is all we require to prove our FCLT. 
After the FLLN, we 
present our FCLT for $\alpha$-stretched sojourn times.

\begin{theorem}[FLLN]\label{FLLN}
Let $\alpha\in[0,1)$. It holds that $\tilde N_\alpha^T(t)\in L^2(\mathbb P)$, for all $T,t\geq0$, and we have \begin{equation}
\sup_{v\in[0,1]}\left|T^{-1}N_\alpha^T(v)-\mu\right|\to0\text{ in }L^2(\mathbb P)\text{ as }T\to\infty.
\end{equation}
\end{theorem}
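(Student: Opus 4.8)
The plan is to follow the argument of \cite[Theorem 1]{Bacry}, the only genuinely new feature being that the kernels $\bar h^T$ and $\mathscr H^T$ depend on $T$; consequently the point requiring care is that every estimate be uniform in $T$, which rests on the identities $\|\bar h^T\|_{L^1}=\|h\|_{L^1}$ and $\|\mathscr H^T\|_{L^1}=\|h\|_{L^1}/(1-\|h\|_{L^1})$ established in and after (\ref{h switching order int}). Throughout write $\mu:=\lambda_0/(1-\|h\|_{L^1})$; we must show that $\tilde N_\alpha^T(t)\in L^2(\mathbb P)$ for all $T,t\geq0$ and that $\sup_{v\in[0,1]}|T^{-1}N_\alpha^T(v)-\mu v|\to0$ in $L^2(\mathbb P)$, the comparison being with $\mu v$ as in Lemma \ref{BacryL5}. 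Recall $N_\alpha^T(v)=\tilde N_\alpha^T(Tv)$, so both assertions concern $\tilde N_\alpha^T$.

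\emph{$L^2$-integrability.} Let $\tilde M_\alpha^T$ be the martingale introduced before Lemma \ref{BacryL4}. Its predictable quadratic variation is $\langle\tilde M_\alpha^T\rangle_t=\int_0^t\tilde\Lambda_\alpha^T(s)\,\mathrm ds$, so by the analogue of \cite[Lemma 6]{Bacry} (the $L^2$-control of the compensated point process) together with Lemma \ref{BacryL2} one gets $\mathbb E[(\tilde M_\alpha^T(t))^2]=\mathbb E[\tilde N_\alpha^T(t)]\leq\mu t<\infty$. Taking $L^2(\mathbb P)$-norms in the representation (\ref{Bacry14}) and using the triangle inequality and Minkowski's integral inequality,
$$\big\|\tilde N_\alpha^T(t)\big\|_{L^2}\leq\mathbb E[\tilde N_\alpha^T(t)]+\big\|\tilde M_\alpha^T(t)\big\|_{L^2}+\int_0^t\mathscr H^T(t-s)\,\big\|\tilde M_\alpha^T(s)\big\|_{L^2}\,\mathrm ds\leq\mu t+\sqrt{\mu t}+\sqrt{\mu t}\,\|\mathscr H^T\|_{L^1}<\infty,$$
using $\|\tilde M_\alpha^T(s)\|_{L^2}\leq\sqrt{\mu s}\leq\sqrt{\mu t}$ for $s\leq t$ and $\mathscr H^T\in L^1$. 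Hence $\tilde N_\alpha^T(t)\in L^2(\mathbb P)$, and in particular $N_\alpha^T(v)\in L^2(\mathbb P)$.

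\emph{Splitting off the deterministic part.} Decompose
$$T^{-1}N_\alpha^T(v)-\mu v=T^{-1}\big(N_\alpha^T(v)-\mathbb E[N_\alpha^T(v)]\big)+\big(T^{-1}\mathbb E[N_\alpha^T(v)]-\mu v\big).$$
The second, deterministic term converges to $0$ uniformly in $v\in[0,1]$ by Lemma \ref{BacryL5} with $p=0$ (the hypothesis $\int_0^\infty h<\infty$ being part of (A1)). For the first term, evaluate (\ref{Bacry14}) at time $Tv$:
$$T^{-1}\big(N_\alpha^T(v)-\mathbb E[N_\alpha^T(v)]\big)=T^{-1}\tilde M_\alpha^T(Tv)+T^{-1}\int_0^{Tv}\mathscr H^T(Tv-s)\,\tilde M_\alpha^T(s)\,\mathrm ds.$$
By Doob's $L^2$-maximal inequality, $\mathbb E\big[\sup_{t\in[0,T]}(\tilde M_\alpha^T(t))^2\big]\leq4\,\mathbb E[(\tilde M_\alpha^T(T))^2]\leq4\mu T$. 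Since $v\mapsto Tv$ maps $[0,1]$ onto $[0,T]$, this yields $T^{-1}\big\|\sup_{v\in[0,1]}|\tilde M_\alpha^T(Tv)|\big\|_{L^2}\leq2\sqrt{\mu/T}\to0$. For the convolution term, bound the integrand in absolute value by $\sup_{s\in[0,T]}|\tilde M_\alpha^T(s)|$ and use $\int_0^{Tv}\mathscr H^T(Tv-s)\,\mathrm ds\leq\|\mathscr H^T\|_{L^1}=\|h\|_{L^1}/(1-\|h\|_{L^1})$, a constant independent of $T$ and $v$; hence
$$T^{-1}\Big\|\sup_{v\in[0,1]}\Big|\int_0^{Tv}\mathscr H^T(Tv-s)\,\tilde M_\alpha^T(s)\,\mathrm ds\Big|\Big\|_{L^2}\leq\frac{\|h\|_{L^1}}{1-\|h\|_{L^1}}\cdot2\sqrt{\mu/T}\to0.$$
Combining the three estimates gives $\sup_{v\in[0,1]}|T^{-1}N_\alpha^T(v)-\mu v|\to0$ in $L^2(\mathbb P)$, as claimed.

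\emph{Main obstacle.} Compared with \cite{Bacry}, the only real point of care is the $T$-dependence of $\bar h^T$ and $\mathscr H^T$: one must verify that Lemma \ref{BacryL4}, the Doob estimate, and the $L^1$-control of $\mathscr H^T$ all carry constants independent of $T$. The $T$-free identity $\|\mathscr H^T\|_{L^1}=\|h\|_{L^1}/(1-\|h\|_{L^1})$ from (\ref{h switching order int}) is exactly what makes this work; once this is in hand the proof is a transcription of \cite[Theorem 1]{Bacry}, with $\varphi,\psi$ there replaced by $\bar h^T,\mathscr H^T$ and their Lemmas 4, 5 replaced by Lemmas \ref{BacryL4}, \ref{BacryL5} here.
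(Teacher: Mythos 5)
Your proof is correct and follows the same route the paper intends: the paper's own proof is a pointer to \cite[Theorem 1]{Bacry} with Lemmas~\ref{BacryL4} and \ref{BacryL5} substituted for Bacry's Lemmas 4 and 5, and your argument is precisely that transcription, with the $T$-uniform identity $\|\mathscr H^T\|_{L^1}=\|h\|_{L^1}/(1-\|h\|_{L^1})$ and the standard square-integrability/Doob estimate filled in explicitly. The only discrepancy is cosmetic: the theorem as printed omits the factor $v$ in the centering constant, which should read $\frac{\lambda_0}{1-\|h\|_{L^1}}v$; you correctly read it that way.
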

\begin{proof}
The proof follows from similar arguments as the proof of \cite{Bacry}, Theorem~1, using Lemmas~\ref{BacryL4} and \ref{BacryL5} established above instead of \cite{Bacry}, Lemma~4 and \cite{Bacry}, Lemma~5, respectively, and using an analog of \cite{Bacry}, Lemma~6, which is easily seen to hold in our case as well. 
\end{proof}
\begin{theorem}[FCLT]\label{FCLT}
Let $\epsilon\in(0,1)$, and let $B$ be a standard Brownian motion. For $\alpha=\frac12$, we have
\begin{equation}
\left(\frac{N^T_{1/2}(v)-\mu\,Tv}{\sqrt T}\right)_{v\in[\epsilon,1]}\longrightarrow\left(-\bar\sigma +\sigma\,B(v)\right)_{v\in[\epsilon,1]},\label{FCLTresult}
\end{equation}
as $T\to\infty$, weakly on $D[\epsilon,1]$ equipped with the Skorokhod $J_1$-topology. 
On the other hand, for $\alpha\in[0,\frac12)$, it holds that 
\begin{equation}
\left(\frac{N^T_{\alpha}(v)-\mu\,Tv}{\sqrt T}\right)_{v\in[0,1]}\longrightarrow\left(\sigma\,B(v)\right)_{v\in[0,1]},\label{FCLTresultlowalpha}
\end{equation}
as $T\to\infty$, weakly on $D[0,1]$ equipped with the Skorokhod $J_1$-topology.
\end{theorem}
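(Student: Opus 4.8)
\emph{Proof proposal.} I would follow the strategy of \cite{Bacry} for the FCLT of the unmarked Hawkes process, basing everything on the martingale decomposition of Lemma~\ref{BacryL4} and then analysing how contracting time by $T$ degenerates the renewal kernel $\mathscr H^T$. Write $X_\alpha^T(v)=\big(N_\alpha^T(v)-\mathbb E[N_\alpha^T(v)]\big)/\sqrt T$ and split
\begin{equation*}
\frac{N_\alpha^T(v)-\frac{\lambda_0}{1-\|h\|_{L^1}}Tv}{\sqrt T}=X_\alpha^T(v)+\sqrt T\Big(T^{-1}\mathbb E[N_\alpha^T(v)]-\frac{\lambda_0}{1-\|h\|_{L^1}}v\Big).
\end{equation*}
The deterministic second term is handled purely by Lemma~\ref{BacryL5}. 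For $\alpha=\tfrac12$ we have $\sqrt T=T^{1-\alpha}$, so with $p=1$ it converges, uniformly on $[\epsilon,1]$, to $-\lambda_0\mathbb E[J]\|h\|_{L^1}/(1-\|h\|_{L^1})^2$; this is the origin both of the extra term in \eqref{FCLTresult} and of the restriction to $[\epsilon,1]$, and of the need for (A3). For $\alpha\in[0,\tfrac12)$ we instead apply Lemma~\ref{BacryL5} with $p=\tfrac1{2(1-\alpha)}<1$ (finite by (A2)); since $(1-\alpha)p=\tfrac12$, the bias term tends to $0$ uniformly on $[0,1]$. It remains to identify the limit of $X_\alpha^T$, which will be $\sqrt{\lambda_0/(1-\|h\|_{L^1})^3}\,B$ for every $\alpha\in[0,\tfrac12]$.

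Put $Y_\alpha^T(v)=\tilde M_\alpha^T(Tv)/\sqrt T$, a martingale with jumps of size $1/\sqrt T\to0$ and predictable quadratic variation $\langle Y_\alpha^T\rangle_v=T^{-1}\int_0^{Tv}\tilde\Lambda_\alpha^T(s)\,\mathrm ds=T^{-1}\tilde N_\alpha^T(Tv)-T^{-1}\tilde M_\alpha^T(Tv)$. By Theorem~\ref{FLLN} the first summand tends in $L^2(\mathbb P)$ to $\frac{\lambda_0}{1-\|h\|_{L^1}}v$, while $\mathbb E\big[(T^{-1}\tilde M_\alpha^T(Tv))^2\big]=T^{-2}\mathbb E[\tilde N_\alpha^T(Tv)]\to0$; hence $\langle Y_\alpha^T\rangle_v\to\frac{\lambda_0}{1-\|h\|_{L^1}}v$ in probability for each $v$, and a standard martingale functional central limit theorem gives $Y_\alpha^T\Rightarrow\sqrt{\lambda_0/(1-\|h\|_{L^1})}\,B$ weakly on $D[0,1]$ with the $J_1$-topology, with continuous (hence $C$-tight) limit. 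Contracting time in \eqref{Bacry14} and dividing by $\sqrt T$ yields the exact identity
\begin{equation*}
X_\alpha^T(v)=Y_\alpha^T(v)+\int_0^v T\mathscr H^T\big(T(v-u)\big)\,Y_\alpha^T(u)\,\mathrm du.
\end{equation*}

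The heart of the argument, and the step I expect to be the main obstacle, is to show that convolution against the scaled kernel $w\mapsto T\mathscr H^T(Tw)$ converges to multiplication by its total mass $\|h\|_{L^1}/(1-\|h\|_{L^1})$ (the value of $\|\mathscr H^T\|_{L^1}$ computed just before Lemma~\ref{BacryL2}), jointly with $Y_\alpha^T\Rightarrow\sqrt{\lambda_0/(1-\|h\|_{L^1})}\,B$. The mechanism is that $T\mathscr H^T(T\,\cdot)$ collapses to $\frac{\|h\|_{L^1}}{1-\|h\|_{L^1}}\delta_0$: writing $\bar h^T$ as the sub-probability density of $S+T^\alpha J$ with $S$ of density $h$ and $J$ of the service law, one has $(S+T^\alpha J)/T\to0$ a.s.\ for every $\alpha<1$, so $T\bar h^T(T\,\cdot)$ converges weakly to $\|h\|_{L^1}\delta_0$; since $\mathscr H^T=\sum_{k\ge1}(\bar h^T)^{*k}$ with geometrically summable masses $\|h\|_{L^1}^k$, the collapse propagates to $T\mathscr H^T(T\,\cdot)$, and in particular $\int_{\delta T}^\infty\mathscr H^T(s)\,\mathrm ds\to0$ for every fixed $\delta>0$ by dominated convergence. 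Crucially this requires no moment assumption on $J$, which is why (A3) is needed only for $\alpha=\tfrac12$.

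Granting this, I would use a Skorokhod coupling so that $Y_\alpha^T\to\sqrt{\lambda_0/(1-\|h\|_{L^1})}\,B=:Y$ a.s., uniformly on $[0,1]$ (the limit being continuous). Inserting $Y_\alpha^T(u)=Y(u)+o(1)$ into the integral identity, the $o(1)$-contribution is at most $\|Y_\alpha^T-Y\|_\infty\,\|\mathscr H^T\|_{L^1}\to0$, while $\int_0^v T\mathscr H^T(T(v-u))Y(u)\,\mathrm du\to\frac{\|h\|_{L^1}}{1-\|h\|_{L^1}}Y(v)$ uniformly on $[0,1]$ by the approximate-identity property — uniformity near $v=0$ coming from $Y(0)=0$ and continuity, uniformity away from $0$ from the tail bound above. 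Hence $X_\alpha^T=Y_\alpha^T+(\text{convolution})\to\big(1+\tfrac{\|h\|_{L^1}}{1-\|h\|_{L^1}}\big)Y=\sqrt{\lambda_0/(1-\|h\|_{L^1})^3}\,B$ a.s.\ uniformly, so $X_\alpha^T\Rightarrow\sqrt{\lambda_0/(1-\|h\|_{L^1})^3}\,B$. A concluding Slutsky step adds back the deterministic bias term, yielding \eqref{FCLTresult} on $D[\epsilon,1]$ for $\alpha=\tfrac12$ and \eqref{FCLTresultlowalpha} on $D[0,1]$ for $\alpha\in[0,\tfrac12)$. The delicate points to verify with care are the uniformity in $v$ of the approximate-identity limit, which is what $J_1$-convergence of the convolution term needs, and the joint (not merely marginal) convergence of the martingale and convolution terms, which the coupling supplies.
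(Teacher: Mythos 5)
Your proposal is correct and follows essentially the same path as the paper's (and Bacry's) proof: martingale decomposition via Lemma~\ref{BacryL4}, martingale FCLT for $\tilde M_\alpha^T(T\cdot)/\sqrt T$, Lemma~\ref{BacryL5} with $p=1/(2(1-\alpha))$ for the bias term, and an approximate-identity / concentration argument for the convolution term, which is precisely what the paper refers to as the analogue of Bacry's Lemma~7. Your explicit use of Skorokhod representation to obtain joint a.s.\ convergence of the martingale and convolution parts is a clean way to package the step that Bacry handles through a deterministic approximation lemma plus the continuous mapping theorem, but the underlying mechanism (the scaled kernel $T\mathscr H^T(T\cdot)$ collapsing to $\frac{\|h\|_{L^1}}{1-\|h\|_{L^1}}\delta_0$, valid without (A3)) is the same.
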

\begin{proof}
The proof is analogous to the one of \cite{Bacry}, Theorem~2, using an analog of \cite{Bacry}, Lemma~7, using Lemma~\ref{BacryL4} above instead of \cite{Bacry}, Lemma~4, and using Lemma~\ref{BacryL5} above with $p=\frac{1}{2}(1-\alpha)^{-1}$ instead of \cite{Bacry}, Lemma~5.
\end{proof}

For $\alpha=\frac12$, Theorem~\ref{FCLT} yields convergence on intervals of the form $[\epsilon,1]$, where $\epsilon>0$ can be taken arbitrarily small. 
This is in contrast to the case $\alpha\in[0,\frac12)$ and to \cite{Bacry}, Theorem 2, where we obtain convergence on the whole unit interval. 
For each $\alpha\in[0,\frac12]$, both the centralising constant $\mu$ and the Brownian term are the same. 
A notable difference is that in Theorem~\ref{FCLT} with $\alpha=\frac12$ there is a  `correction term'  $-\bar\sigma$ in the limit. 

We can explain this result \emph{heuristically}.
In the limiting result \eqref{FCLTresult}, we start observing the process at time $T\epsilon$, for fixed $\epsilon>0$. 
For large $T$, this means that the process approaches stationarity on $[0,T\epsilon)$. 
By Corollary~\ref{steadystateequaldist} below --- which covers the Markovian case --- and the heuristic explanation given thereafter, there is good reason to believe that Hawkes and delayed Hawkes processes have the same stationary distributions. 
Therefore, we expect to find similar limits. 
However, in \eqref{FCLTresult} delays were also stretched out by a factor of $T^{1/2}$, meaning that excitation takes more time to come into full effect, which causes $\mu\,T\cdot$ to overestimate the mean of $N^T_{1/2}(\cdot)$ on $[0,\epsilon)$. 
This is compensated for by the negative term $-\bar\sigma$ appearing in the limit. 

\begin{remark}\label{otheralpha}
Under (A1), for any $\alpha\in[0,1)$, it is possible to find a FCLT as in Theorem~\ref{FCLT}, stating that, as $T\to\infty$,
\begin{equation}
\left(\frac{N^T_{\alpha}(v)-\mathbb E[N^T_{\alpha}(v)]}{\sqrt T}\right)_{v\in[0,1]}\longrightarrow\left(\sigma\,B(v)\right)_{v\in[0,1]},
\end{equation}
weakly on $D[0,1]$ equipped with the Skorokhod $J_1$-topology. When $\alpha\in(\frac12,1)$, in contrast to the case  $\alpha\in[0,\frac12]$, we cannot use Lemma \ref{BacryL5} to replace $\mathbb E[N^T_{\alpha}(v)]$ in this expression.

For $\alpha=1$, if we take sojourn times having support on $[1,\infty)$, the excitation would not be visible, since in the scaling limit we observe the process on (a subset of) $[0,1]$. 
In this case, the unscaled process on $[0,T]$ would just be a homogeneous Poisson process of rate $\lambda_0$, for which an FCLT holds; e.g., use (\ref{SLBacry1})--(\ref{SLBacry2}) with $h\equiv0$. 
When $\alpha>1$, we would see the same behavior. 
The situation where $\alpha=1$ \emph{and} where the sojourn time attains values in $(0,1)$ with positive probability is more delicate. 
\end{remark}

\section{Transform analysis and heavy-tailed asymptotics}\label{chapternonmarkov}
In this section, we perform transform analysis for point processes having sojourn-time dependent excitation.
First, in Section~\ref{non-markov}, we use cluster-representation based methods to describe fixed points in the transform domain, after which, in Section~\ref{heavy-tails}, those fixed-point equations are used to derive heavy-tailed asymptotics. 
A supplement to this section can be found in Appendix~\hyperref[app A3]{D}, where we study cluster size distributions for gamma-distributed marks.

\subsection{Transform characterizations with sojourn-time dependent excitation}\label{non-markov}
In \cite{multivariateKLM}, multivariate non-Markovian Hawkes-fed birth-death processes were studied using
cluster-representation based  methods. 
In Definition~\ref{DHcluster2}, we gave a cluster representation for the $d$-dimensional birth-death process with sojourn-time dependent excitation, analogous to the one for the multivariate Hawkes-fed birth-death process. 
As it turns out, the cluster representation is the pivotal ingredient for the results from \cite{multivariateKLM}, \S3--4: Definition~\ref{DHcluster2} enables us to obtain analogous results for our general family of models having sojourn-time dependent excitation. 
The modifications needed in the respective proofs are relatively straightforward, and mostly come down to suitably replacing randomness of the form $B_{ij,\omega}h_{ij}$ by sojourn-time dependent randomness of the form $h_{ij,J,\omega}$. 
Therefore, to save space, we provide the proof of the next result in online Supplementary Material~\cite{Supplement}.

\begin{theorem}\label{summary KLM}
Consider the joint birth-death and intensity process $(\boldsymbol Q(t),\boldsymbol\Lambda(t))$ from Definition~\ref{DHcluster2}. 
Under the regularity conditions given there, the joint Z- and Laplace transform of $(\boldsymbol Q(t),\boldsymbol\Lambda(t))$ can be expressed as 
\begin{equation}\mathbb E\left[\boldsymbol z^{\boldsymbol Q(t)}e^{-\boldsymbol s^\top\boldsymbol \Lambda(t)}\right]=\prod_{j=1}^d\exp\left(-\lambda_{j,0}\left(t+s_j-\int_0^t\mathbb E\left[\boldsymbol z^{\boldsymbol S_j^{\boldsymbol Q}(u)}e^{-\boldsymbol s^\top\boldsymbol S^{\boldsymbol \Lambda}_j(u)}\right]\ \mathrm du\right)\right),\label{jointtansform to clustertransform}
\end{equation} 
where the cluster processes $\boldsymbol S^{\boldsymbol Q}_j(\cdot)$, $\boldsymbol S^{\boldsymbol \Lambda}_j(\cdot)$ are defined in Section~\ref{preliminaries}.

Combine the cluster processes for individual coordinates into a matrix $\boldsymbol S^{\boldsymbol\star}(\cdot)$ with $j$-th column $\boldsymbol S^{\boldsymbol \star}_j(\cdot)$, for $\boldsymbol\star\in\{\boldsymbol Q,\boldsymbol\Lambda\}$. 
Then the joint vector-valued transform $\boldsymbol{\mathcal J}_{\boldsymbol S^{\boldsymbol Q},\boldsymbol S^{\boldsymbol\Lambda}}(\cdot)$ of $\boldsymbol S^{\boldsymbol Q}(\cdot)$, $\boldsymbol S^{\boldsymbol \Lambda}(\cdot)$, which has as $j$-th component the joint transform 
\begin{align}
u\mapsto\mathbb E\left[\boldsymbol z^{\boldsymbol S_j^{\boldsymbol Q}(u)}e^{-\boldsymbol s^\top\boldsymbol S^{\boldsymbol \Lambda}_j(u)}\right],    
\end{align} 
is the unique point of $\phi$, which maps the space $\mathbb J^d$ of vector-valued $d$-dimensional joint Z- and Laplace transforms $\boldsymbol{\mathcal J}(\cdot)$ to itself, and is defined by \begin{equation}\boldsymbol{\mathcal J}(\cdot)=\begin{bmatrix}\mathcal J_1(\cdot)\\\vdots\\\mathcal J_d(\cdot)\end{bmatrix}\mapsto\begin{bmatrix}\phi_1(\mathcal J_1,\ldots,\mathcal J_d)(\cdot)\\\vdots\\\phi_d(\mathcal J_1,\ldots,\mathcal J_d)(\cdot)\end{bmatrix}=\begin{bmatrix}\phi_1(\boldsymbol{\mathcal J})(\cdot)\\\vdots\\\phi_d(\boldsymbol{\mathcal J})(\cdot)\end{bmatrix}=\phi(\boldsymbol{\mathcal J})(\cdot),\end{equation} where for $j\in[d]$ \begin{align}\phi_j(\boldsymbol{\mathcal J})(u)&\equiv\phi_j(\boldsymbol{\mathcal J})(u,\boldsymbol s,\boldsymbol z)\label{defphij}\\&=\mathbb E_{J,\omega}\left[z_j^{\mathbf1\{J>u\}}\prod_{i=1}^de^{-s_ih_{ij,J,\omega}(u)}\prod_{m=1}^d\exp\left(-\int_0^uh_{mj,J,\omega}(v)\left(1-\mathcal J_m(u-v,\boldsymbol s,\boldsymbol z)\right)\ \mathrm dv\right)\right].\nonumber\end{align}
Furthermore, for any $\boldsymbol{\mathcal J}^{(0)}(\cdot)\in\mathbb J^d$, the sequence $(\boldsymbol{\mathcal J}^{(n)}(u))_{n\in\mathbb N_0}$ of iterates of $\boldsymbol{\mathcal J}^{(0)}(\cdot)$ under $\phi$, defined inductively by $\boldsymbol{\mathcal J}^{(n)}(\cdot):=\phi(\boldsymbol{\mathcal J}^{(n-1)})(\cdot)$, converges pointwise on intervals $[0,t]$ to the fixed point
$\boldsymbol{\mathcal J}_{\boldsymbol S^{\boldsymbol Q},\boldsymbol S^{\boldsymbol \lambda}}(u)$. That is, as $n\to\infty$, for any $u\in[0,t]$, \begin{equation}\boldsymbol{\mathcal J}^{(n)}(u)\equiv\boldsymbol{\mathcal J}^{(n)}(u,\boldsymbol s,\boldsymbol z)\to\boldsymbol{\mathcal J}_{\boldsymbol S^{\boldsymbol Q},\boldsymbol S^{\boldsymbol \lambda}}(u,\boldsymbol s,\boldsymbol z)\equiv\boldsymbol{\mathcal J}_{\boldsymbol S^{\boldsymbol Q},\boldsymbol S^{\boldsymbol \lambda}}(u).\end{equation}
\end{theorem}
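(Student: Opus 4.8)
The plan is to follow the proof strategy of \cite{multivariateKLM}, exploiting three structural properties of the cluster representation from Definition \ref{DHcluster2}: (i) clusters triggered by immigrants in a given coordinate are i.i.d.\ modulo a time shift; (ii) clusters are generated independently across source coordinates; and (iii) the self-similarity property, i.e.\ each event in a cluster begets its own sub-cluster distributed like a fresh cluster. The only modification needed relative to \cite{multivariateKLM} is that the randomness previously carried by a mark-kernel pair $B_{ij,\omega}h_{ij}(\cdot)$ is now carried by the sojourn-time dependent random kernel $h_{ij,J,\omega}(\cdot)$; since everything is conditioned on the realisation of $J$ and $\omega$ before it enters the formulas, this substitution is purely cosmetic and does not affect the argument.

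First I would establish \eqref{jointtansform to clustertransform}. Condition on the immigration process in each coordinate $j$, which by construction is Poisson$(\lambda_{j,0})$ on $[0,t]$ with i.i.d.\ sojourn-time marks. Decompose $\boldsymbol Q(t)=\sum_j\sum_{r}\boldsymbol S_j^{\boldsymbol Q}(t-t_r^{(0)})$ and $\boldsymbol\Lambda(t)=\boldsymbol\lambda_0+\sum_j\sum_r\boldsymbol S_j^{\boldsymbol\Lambda}(t-t_r^{(0)})$, summing over immigrants of coordinate $j$ at times $t_r^{(0)}$ and using that distinct immigrant clusters are independent. Taking the joint Z- and Laplace transform, one conditions on the immigrant arrival times and uses independence of the clusters to turn the expectation into a product over immigrants of $\mathbb E[\boldsymbol z^{\boldsymbol S_j^{\boldsymbol Q}(t-t_r^{(0)})}e^{-\boldsymbol s^\top \boldsymbol S_j^{\boldsymbol\Lambda}(t-t_r^{(0)})}]$; then averaging over the Poisson number and locations of immigrants of coordinate $j$ yields the exponential-of-integral form via the standard Poisson functional (Campbell/exponential formula), giving exactly \eqref{jointtansform to clustertransform} with the factor $\exp(-\boldsymbol s^\top\boldsymbol\lambda_0)$ absorbed into the $s_j$ term.

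Next I would derive the fixed-point equation \eqref{defphij} for the cluster transform itself. Fix an ancestor event in coordinate $j$ at time $0$ with sojourn time $J$ and realised kernels $h_{mj,J,\omega}$. Conditionally on these, the ancestor contributes $z_j^{\mathbf 1\{J>u\}}$ to $\boldsymbol S_j^{\boldsymbol Q}(u)$ (it is still in the queue iff $J>u$) and $h_{ij,J,\omega}(u)$ to the $i$th intensity coordinate $\boldsymbol S_j^{\boldsymbol\Lambda}(u)$ (a departure-driven bump after delay; the precise form depends on the choice of $h_{\cdot j,J,\omega}$ but is captured generically), which gives the factors $z_j^{\mathbf 1\{J>u\}}$ and $\prod_i e^{-s_ih_{ij,J,\omega}(u)}$. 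The ancestor spawns, in each target coordinate $m$, a Poisson$(h_{mj,J,\omega})$ number of first-generation offspring; each such offspring at time $v\le u$ independently initiates a sub-cluster distributed as $\boldsymbol S_m^{\boldsymbol\star}$ observed over the remaining time $u-v$. Applying the Poisson exponential formula again in each coordinate $m$ produces $\prod_m\exp(-\int_0^u h_{mj,J,\omega}(v)(1-\mathcal J_m(u-v,\boldsymbol s,\boldsymbol z))\,\mathrm dv)$, and finally taking $\mathbb E_{J,\omega}$ over the ancestor's own randomness yields \eqref{defphij}. This shows $\boldsymbol{\mathcal J}_{\boldsymbol S^{\boldsymbol Q},\boldsymbol S^{\boldsymbol\Lambda}}$ is a fixed point of $\phi$.

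For uniqueness and convergence of iterates, I would endow $\mathbb J^d$ with the metric of uniform convergence on $[0,t]$ of the transforms (or an equivalent sup-type distance on $[0,t]$), and show $\phi$ is a contraction, or at least that iterated images converge. The key estimate: since $|z_j^{\mathbf 1\{J>u\}}\prod_i e^{-s_ih_{ij,J,\omega}(u)}|\le 1$ and $|e^{-x}-e^{-y}|\le|x-y|$ for $x,y\ge0$, one gets $|\phi_j(\boldsymbol{\mathcal J})(u)-\phi_j(\tilde{\boldsymbol{\mathcal J}})(u)|\le \sum_m\mathbb E_{J,\omega}\int_0^u h_{mj,J,\omega}(v)\,|\mathcal J_m(u-v)-\tilde{\mathcal J}_m(u-v)|\,\mathrm dv$, so after $k$ iterations the discrepancy is controlled by $k$-fold convolutions of $\mathbb E_{J,\omega}\|h_{mj,J,\omega}\|$, which tend to $0$ on the bounded interval $[0,t]$ because of the finiteness assumption $\mathbb E_{J_j}\mathbb E_{\omega|J_j}\|h_{ij,J_j,\omega}\|_{L^\infty}<\infty$ and the fact that $(\text{const})^k t^k/k!\to0$ (a Gronwall/Picard-type bound). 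This simultaneously gives uniqueness of the fixed point and pointwise (indeed uniform on $[0,t]$) convergence of the iterates $\boldsymbol{\mathcal J}^{(n)}$ to $\boldsymbol{\mathcal J}_{\boldsymbol S^{\boldsymbol Q},\boldsymbol S^{\boldsymbol\Lambda}}$ from an arbitrary starting transform $\boldsymbol{\mathcal J}^{(0)}\in\mathbb J^d$.

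The main obstacle I anticipate is bookkeeping rather than conceptual: one must check that all the Poisson functional manipulations (conditioning on arrival times, then averaging) are justified given that the kernels are themselves random and, crucially, that the $j$th column of $\boldsymbol H_{\boldsymbol J}$ shares one realisation of $J_j$ while being conditionally independent across rows — so the expectation $\mathbb E_{J,\omega}$ in \eqref{defphij} must be taken jointly over the shared $J$ and the conditionally independent $\omega$'s, in exactly that order. A secondary subtlety is ensuring $\phi$ genuinely maps $\mathbb J^d$ into itself, i.e.\ that $\phi_j(\boldsymbol{\mathcal J})$ really is a valid joint Z-/Laplace transform (positivity, the right monotonicity/boundedness in $\boldsymbol z,\boldsymbol s$, and correct value at $\boldsymbol z=\boldsymbol 1,\boldsymbol s=\boldsymbol 0$); this follows because the right-hand side of \eqref{defphij} is manifestly the transform of the cluster contribution of one ancestor given that its descendants' contributions have transforms $\mathcal J_m$, but it should be stated explicitly. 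Since all of this parallels \cite{multivariateKLM} line by line with the substitution $B_{ij,\omega}h_{ij}\rightsquigarrow h_{ij,J,\omega}$, I would present the statement and refer the reader there for the routine verifications, as the paper does.
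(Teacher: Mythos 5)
Your proposal is correct and takes essentially the same approach as the paper, which explicitly defers to the proof machinery of \cite{multivariateKLM} and states that the only modification is the replacement of the mark-kernel randomness $B_{ij,\omega}h_{ij}(\cdot)$ by the sojourn-time dependent random kernel $h_{ij,J,\omega}(\cdot)$. You correctly reproduce the three-step structure (Poisson exponential formula for immigrants, self-similarity decomposition of clusters, Picard/Gronwall argument for uniqueness and convergence of iterates) and, in particular, correctly invoke the condition $\mathbb E_{J_j}\mathbb E_{\omega|J_j}\|h_{ij,J_j,\omega}\|_{L^\infty}<\infty$ in the contraction step, which the paper flags as the appropriate regularity assumption for precisely this purpose.
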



\begin{remark}
It is possible to generalize Theorem~\ref{summary KLM} to a feedforward network in which a particle in coordinate $j\in[d]$ is sent to coordinate $j+1$ after service. 
Here, it is understood that when $j=d$, the particle leaves the system after service. 
Letting $J_j,\ldots,J_d$ be the sojourn times of the components visited by the particle that arrived in component $j$, and assuming that the excitation function $h_{ij,J,\omega}$ is dependent on the \emph{total} time $J:=\sum_{l=j}^dJ_l$ spent in the system, we can obtain a result analogous to Theorem~\ref{summary KLM}. 
The operator appearing in the fixed-point equation now reads \begin{align*}\phi_j(\boldsymbol{\mathcal J})(u,\boldsymbol s,\boldsymbol z)=\mathop{\mathbb E}_{J_j,\ldots,J_d,\omega}\left[c(u)\prod_{m=1}^d\exp\left(-\int_0^uh_{mj,J,\omega}(v)\left(1-\mathcal J_m(u-v,\boldsymbol s,\boldsymbol z)\right)\ \mathrm dv\right)\right],\end{align*} where $$c(u)=\prod_{l=j}^dz_l^{\mathbf 1\left\{\sum_{m=j}^{l=1}J_m\leq u,\sum_{m=j}^lJ_m>u\right\}}\prod_{i=1}^de^{-s_ih_{ij,J,\omega}(u)}.$$
An analysis treating multiple parallel tandem systems, as conducted for shot-noise processes in \cite{shotnoise}, is hard in the non-Markovian (delayed) Hawkes case: in contrast to a network of shot-noise processes, the sample paths of parallel (delayed) Hawkes networks influence each other. In the Markovian case, however,  we are able to characterize the transform of \emph{any} irreducible $d$-dimensional network; see Section~\ref{networksmarkov}.
\end{remark}

\subsection{Heavy-tailed asymptotics} \label{heavy-tails}
In this subsection, we specify the non-Markovian model from Section~\ref{non-markov} to the one-dimensional delayed Hawkes case, so that the randomness in the excitation function is of the form $h_{J,\omega}(\cdot)=B_\omega h(\cdot-J)\mathbf1\{\cdot>J\}$. 
We show that if the marks $B_\omega$ are heavy-tailed --- in the sense of being regularly varying --- the birth-death process will be so as well. 
Our proof uses \eqref{jointtansform to clustertransform} and the fixed-point equation for the transform, \eqref{defphij}.

\begin{definition}
Let $\alpha>0$.
An a.s.\ positive random variable $X$ is called \textit{regularly varying} of index $-\alpha$ if\begin{equation}\mathbb P(X>x)=\ell(x)x^{-\alpha},\quad x\geq0,\end{equation} where $\ell$ is a slowly varying function at infinity, meaning that $\ell(\gamma x)\sim\ell(x)$ as $x\to\infty$, for all $\gamma>1$. We write $\mathscr R(-\alpha)$ for the class of regularly varying random variables of \textit{tail index} $\alpha$.
\end{definition}

We also use the stronger notion of asymptotically power-law tails.

\begin{definition}
An a.s.\ positive random variable $X$ is said to have an \textit{asymptotically power-law tail} (APT) if there exist $C>0$ and $\gamma>1$ such that \begin{equation}\mathbb P(X>x)x^\gamma\to C,\end{equation} as $x\to\infty$. 
In this case we write $X\in\mathrm{APT}(-\gamma)$ and we refer to $\gamma$ as the \textit{tail index}.
\end{definition}

The next result may be compared to \cite{Infinite server queues}, Theorem~6.2. 
Its (lengthy) proof is postponed until Appendix  \hyperref[app A5]{C}.

\begin{theorem}\label{heavytailsunith}
Consider the univariate delayed Hawkes birth-death process with general sojourn times. Assume the stability condition $\| h\|_{L^1}b_1<1$, where $b_1:=\mathbb E[B]$. 
Suppose that $B\in\mathscr R(-\alpha)$ with $\alpha\in(1,2)$. 
Then also $Q(t)\in\mathscr R(-\alpha)$.
\end{theorem}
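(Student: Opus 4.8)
The plan is to transfer the regular variation of $B$ to $Q(t)$ by analysing the joint transform through the fixed-point equation, following the philosophy of \cite[Theorem 6.2]{Infinite server queues} and the Tauberian machinery it relies on. First I would fix $t>0$ and set $z\in(0,1)$, $\boldsymbol s=\boldsymbol 0$ in \eqref{jointtansform to clustertransform}, so that $\mathbb E[z^{Q(t)}]=\exp\left(-\lambda_0\left(t-\int_0^t\mathbb E[z^{S^Q(u)}]\ \mathrm du\right)\right)$, where $S^Q(u)$ is the number of nonexpired particles at time $u$ in a cluster generated by a single immigrant at time $0$. Hence it suffices to understand the tail behaviour of $S^Q(t)$: since $\mathbb P(Q(t)>x)$ and $\lambda_0\int_0^t\mathbb P(S^Q(u)>x)\ \mathrm du$ are asymptotically equivalent as $x\to\infty$ (a standard consequence of the exponential-of-integral structure, using that a regularly varying tail of index $-\alpha$ with $\alpha>1$ has finite mean and that the cluster is a.s. finite under the stability condition), the problem reduces to showing $S^Q(t)\in\mathscr R(-\alpha)$ with the same index and an explicit constant involving $t$.

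The heart of the argument is the single-big-jump heuristic applied to the cluster: a cluster produces a large number of nonexpired particles at time $t$ essentially when \emph{one} departure in the cluster carries an atypically large mark $B$, which then triggers an inhomogeneous Poisson burst with intensity $Bh(\cdot-J)\mathbf 1\{\cdot>J\}$, contributing on the order of $B\int_0^{t-\tau}h(v-J)\mathbf1\{v>J\}\ \mathrm dv$ new particles (those that have arrived but not yet expired). Concretely I would write the fixed-point relation \eqref{defphij} specialised to $d=1$, $\boldsymbol s=\boldsymbol 0$:
\begin{equation}
\mathcal J(u,z)=\mathbb E_{J,B}\left[z^{\mathbf 1\{J>u\}}\exp\left(-B\int_0^u h(v-J)\mathbf 1\{v>J\}\bigl(1-\mathcal J(u-v,z)\bigr)\ \mathrm dv\right)\right],
\end{equation}
differentiate (or take the appropriate finite difference) in $z$ at $z=1$ to get an integral equation for the generating-function tail, and then feed in $B\in\mathscr R(-\alpha)$. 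The mechanism is: $1-\mathcal J(u,z)$ behaves, as $z\uparrow 1$, like $c(u)(1-z)+$ a regularly varying correction of order $(1-z)^{\alpha}$ (the subexponential/regular-variation signature in the transform domain), where the leading linear term encodes the mean number of nonexpired particles and the $(1-z)^\alpha$ term encodes the heavy tail. I would substitute this ansatz into the fixed-point equation, use Karamata's theorem together with the asymptotics $\mathbb E[e^{-B y}]\approx 1-b_1 y + \Gamma(1-\alpha)\ell(1/y)y^\alpha$ as $y\downarrow 0$ for $B\in\mathscr R(-\alpha)$, $\alpha\in(1,2)$, and match orders to obtain a \emph{linear} renewal-type equation for the coefficient of the $(1-z)^\alpha$ term; its solution, via the resolvent $\sum_k (b_1 h)^{*k}$ which converges precisely because $b_1\|h\|_{L^1}<1$, gives the constant, after which a Tauberian theorem (Bingham--Goldie--Teugels, applied to the probability generating function) converts the $(1-z)^\alpha$ behaviour of $\mathbb E[z^{S^Q(t)}]$ back to $\mathbb P(S^Q(t)>x)\sim (\text{const})\,\ell(x)x^{-\alpha}$.

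The main obstacle I anticipate is making the single-big-jump reduction rigorous at the level of the fixed-point equation rather than merely heuristically: one must show that the \emph{self-similar} branching structure does not destroy regular variation — i.e., that having one big mark somewhere in the cluster genealogy, which itself spawns a subcluster, still yields a tail of index exactly $-\alpha$ and not something heavier, and that the contributions of two or more big marks are negligible. This is where the condition $\alpha\in(1,2)$ is used twice: $\alpha>1$ gives finite mean (so the resolvent $\sum_k(b_1 h)^{*k}$ governs the first-order behaviour and the expected cluster size is finite), and $\alpha<2$ ensures the heavy-tail term $(1-z)^\alpha$ genuinely dominates any $O((1-z)^2)$ smooth correction in the transform expansion. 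I would handle this by an induction on generations combined with a dominated-convergence/uniform-integrability argument showing the iterates $\mathcal J^{(n)}$ from Theorem \ref{summary KLM} inherit the $(1-z)^\alpha$ expansion with constants converging to the fixed-point constant, and by an explicit upper/lower bounding of $\mathbb P(S^Q(t)>x)$ that sandwiches it between expressions both asymptotic to $C_t\,\ell(x)x^{-\alpha}$; the lower bound comes from conditioning on the immigrant's own mark being large, the upper bound from a union bound over which departure in the (a.s. finite) cluster carries the large mark, controlled by $\mathbb E[\#\text{departures}]<\infty$.
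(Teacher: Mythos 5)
Your proposal is essentially the paper's argument: specialise the transform fixed-point equation to the univariate $\mathrm{DH}/\mathrm G/\infty$ queue, expand $1-\mathcal J(u,z)$ as a linear term plus a $(1-z)^\alpha$ correction using the Tauberian asymptotics of the mark Laplace transform, solve the resulting Volterra equations for the two coefficients via the resolvent $\sum_k b_1^k\bar h^{*k}$ (summable since $b_1\|h\|_{L^1}<1$), plug back into $\mathbb E[z^{Q(t)}]$, and apply the Tauberian theorem in the reverse direction. The single-big-jump framing and the induction-on-generations scaffolding you propose as a "main obstacle" are not needed: once the Volterra equations for $R_1$ and $R_\alpha$ are solved and shown bounded in $u$, the self-similar branching structure is already accounted for, and the paper does exactly that rather than tracking the iterates $\mathcal J^{(n)}$.

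The one genuine gap is that you treat the slowly varying factor as a constant. The expansion $\beta(s)-1+b_1s\sim\mathrm{const}\cdot\ell(1/s)s^\alpha$ has $\ell$ evaluated at $1/s$, and when you substitute $s=I(u,z;w)=\int_w^u h(v-w)(1-\mathcal J(u-v,z))\,\mathrm dv$, the factor $\ell(1/I(u,z;w))$ depends on $u$, $w$ and $z$ in a way that cannot be pulled out as a clean coefficient of $(1-z)^\alpha$ for general slowly varying $\ell$. The paper addresses this by first proving the result under the stronger $\mathrm{APT}(-\alpha)$ assumption (so $\ell\to C$ and the factor really is a constant), and then extending to general $\mathscr R(-\alpha)$ by establishing a uniform lower bound on $1/I(u,z;w)$ for $z$ near $1$ and sandwiching $\ell(1/I(u,z;w))/\ell(1/(1-z))$ via Potter's theorem, so that upper and lower bounds on the transform expansion both lead to the index-$\alpha$ conclusion. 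Without that two-pass argument your proof only covers the asymptotically-power-law case. (A minor side note: by convexity of $s\mapsto\beta(s)$ one has $\beta(s)-1+b_1s\geq 0$, so the coefficient should be $-C\Gamma(1-\alpha)>0$ rather than $\Gamma(1-\alpha)\ell(1/y)$ as you wrote, but this does not affect the structure.)
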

\renewcommand{\theenumi}{\roman{enumi}}
\begin{remark}
Theorem~\ref{heavytailsunith} admits various extensions.
\begin{enumerate} 
\item We can take sojourn-time dependent marks, i.e., $h_{J,\omega}(\cdot)=B_{J,\omega}h(\cdot-J)\mathbf1\{\cdot>J\}$. 
Suppose that $B|J$ is either light-tailed, or regularly varying of index $-\alpha$ for some $\alpha>1$, $\bar{\mathscr J}$-a.s., in such a way that the infimum of the $\alpha$ for which $B_w\in\mathscr R(-\alpha)$, lies in $(1,2)$, and is attained with positive $\bar{\mathscr J}$-probability. 
Expanding $\beta_w:=\mathbb E[e^{-sB}\,|\,J=w]$  in (\ref{fixedpointeta}) using the Tauberian theorem for $w$ such that $B|J=w$ is regularly varying, and using a Taylor expansion for other $w$, we obtain an equivalent of (\ref{1-eta asymp}), after which we proceed as in the proof of Theorem~\ref{heavytailsunith}.
\item If $\alpha\in(k,k+1)$, $k\in\{2,3,\ldots\}$, the Tauberian theorem for a higher-order expansion yields a more involved, but conceptually analogous, proof for $Q(t)\in\mathscr R(-\alpha)$.
\item Theorem~\ref{heavytailsunith} admits a multivariate generalization, by following the arguments from \cite{multivariateKLM}, \S5. 
\item A proof analogous to the proof of Theorem~\ref{heavytailsunith} shows that if we have regularly varying marks, those marks propagate to the intensity $\Lambda(t)$ as well.
\end{enumerate}
\end{remark}


The following corollary describes heavy-traffic behavior in the heavy-tailed setting; its proof is in Appendix \hyperref[app A5]{C}.

\begin{corollary}\label{heavytailstraffic}
Assume that we are in the heavy-tailed setting of Theorem~\ref{heavytailsunith}, with $B\in\mathrm{APT}(-\alpha)$ for some $\alpha\in(1,2)$. 
Let $\rho=\| h\|_{L^1}b_1<1$, and write $(Q,\Lambda)$ for the stationary distribution of $(Q(\cdot),\Lambda(\cdot))$. 
Then it holds that $(1-\rho)Q$ converges in distribution to some nondegenerate, nondefective random variable $X$ with $\mathbb E\left[X^\alpha\right]=\infty$, as $\rho\uparrow1$.
\end{corollary}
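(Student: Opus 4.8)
The plan is to pass to the limit $\rho\uparrow1$ in the cluster-transform characterisation of the stationary queue length. Letting $t\to\infty$ in the univariate specialisation of Theorem \ref{summary KLM}, i.e.\ in (\ref{Z^nt in eta})--(\ref{fixedpointeta}), and using the stability result of Section \ref{existence}, the stationary $Q$ satisfies $\mathbb E[z^{Q}]=\exp(-\lambda_0\int_0^\infty(1-\eta(u,z))\,\mathrm du)$ with $u\mapsto\eta(u,z)$ the fixed point of the operator in (\ref{fixedpointeta}); taking $z=e^{-\lambda(1-\rho)}$ turns the left-hand side into the Laplace transform of $(1-\rho)Q$ at $\lambda\geq0$. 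Since $\|\bar h\|_{L^1}=\|h\|_{L^1}$ (cf.\ (\ref{h switching order int})) and $R_1$ solves the renewal equation (\ref{eqR1}), we get $\mathbb E[Q]=\lambda_0\int_0^\infty R_1(u)\,\mathrm du=\lambda_0\mathbb E[J]/(1-\rho)$, hence $\mathbb E[(1-\rho)Q]=\lambda_0\mathbb E[J]$ is bounded in $\rho$; Markov's inequality makes $\{(1-\rho)Q\}_{\rho<1}$ tight, so it is enough to show that $\mathbb E[e^{-\lambda(1-\rho)Q}]$ converges as $\rho\uparrow1$, for each fixed $\lambda>0$, to a limit continuous at $\lambda=0$. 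The continuity theorem for Laplace transforms then gives $(1-\rho)Q\stackrel{\mathcal D}\to X$, and tightness forces $X$ to be nondefective.

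The convergence of $\mathbb E[e^{-\lambda(1-\rho)Q}]$ follows from a scaling analysis of (\ref{fixedpointeta}) with $z=e^{-\lambda(1-\rho)}$. A heavily loaded cluster lives for a time of order $(1-\rho)^{-1}$, so I rescale time via $u=v/(1-\rho)$ and expect $1-\eta$ to be of order $1-\rho$; writing $1-\eta(v/(1-\rho),e^{-\lambda(1-\rho)})=(1-\rho)\,g_\rho(v)$, substituting the Tauberian expansion $\beta(x)=1-b_1x-C\Gamma(1-\alpha)x^\alpha+o(x^\alpha)$ from the proof of Theorem \ref{heavytailsunith}, using $\mathscr J(v/(1-\rho))\to0$ for $v>0$, and expanding in the fast variable leads — after the rescaling — to a limiting integral equation for $g=\lim_\rho g_\rho$ that retains a genuine $g\mapsto g^\alpha$ nonlinearity coming from the $C\Gamma(1-\alpha)x^\alpha$ term. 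This nonlinear term is \emph{not} a vanishing correction: once integrated over the macroscopic window it is of the same order as the linear terms, and it is the self-consistent balance inside the limiting equation that keeps the limit finite. The microscopic range $u=O(1)$ only sets the boundary value $g(0^+)$, governed by the $\rho\uparrow1$ limit of $R_1$ (a key-renewal limit), and is negligible in the integral. Integrating, $-\log\mathbb E[e^{-\lambda(1-\rho)Q}]=\tfrac{\lambda_0}{1-\rho}\int_0^\infty(1-\eta(v/(1-\rho),e^{-\lambda(1-\rho)}))\,\mathrm dv\to\lambda_0\int_0^\infty g(v)\,\mathrm dv=:\Psi(\lambda)$, which defines the Laplace transform $\widehat\mu_X=e^{-\Psi}$ of $X$; conceptually, $X$ is a functional of the scaling limit of the heavily loaded cluster process, whose branching mechanism is $\alpha$-stable because $B\in\mathscr R(-\alpha)$.

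The three asserted properties are then read from $\widehat\mu_X=e^{-\Psi}$. First, $\Psi(\lambda)\to0$ as $\lambda\downarrow0$ — its leading term is $\lambda\,\lambda_0\mathbb E[J]$, matching $\mathbb E[(1-\rho)Q]=\lambda_0\mathbb E[J]$ — so $\widehat\mu_X(0^+)=1$ and $X$ is nondefective. Second, $\Psi$ depends on $\lambda$ genuinely nonlinearly (it carries a $\lambda^\alpha$-contribution, see below), so $\widehat\mu_X$ is not the Laplace transform of a constant and $X$ is nondegenerate. Third, the $C\Gamma(1-\alpha)x^\alpha$ term survives the scaling limit, so $\Psi(\lambda)-\lambda_0\mathbb E[J]\lambda\sim c\,\lambda^\alpha$ as $\lambda\downarrow0$ for some $c>0$; as $\alpha<2$ this yields $\widehat\mu_X(\lambda)-1+\lambda_0\mathbb E[J]\lambda\sim-c\,\lambda^\alpha$, whence by the Tauberian theorem \cite[Theorem 8.1.6]{Bingham} the survival function $\mathbb P(X>x)\sim(c/|\Gamma(1-\alpha)|)\,x^{-\alpha}$ is regularly varying of index $-\alpha$, so $\mathbb E[X^\alpha]=\int_0^\infty\alpha x^{\alpha-1}\,\mathbb P(X>x)\,\mathrm dx=\infty$ — the heavy tail of Theorem \ref{heavytailsunith} surviving under the heavy-traffic scaling.

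The main obstacle is the scaling analysis of the nonlinear fixed-point equation: pinning down the joint rescaling of time and amplitude, separating the $O(1)$ microscopic boundary layer from the $O((1-\rho)^{-1})$ macroscopic bulk, and — the delicate point — controlling the remainder in the Tauberian expansion of $\beta$ uniformly in $u$, since the $x^\alpha$-term must be carried through the renewal structure together with the linear part rather than discarded, and one must show the resulting limiting equation has a unique solution producing a finite, strictly positive coefficient $c$ of $\lambda^\alpha$ in $\Psi$.
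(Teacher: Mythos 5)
You take a genuinely different route than the paper. The paper's proof is a direct two-step argument from the expansion (\ref{expansionznt}) and the orders of $\int_0^\infty R_1$ and $\int_0^\infty R_\alpha$ already established in the proof of Theorem~\ref{heavytailsunith}: after substituting $z\mapsto z^{1-\rho}$ one observes that the $(1-z)$-coefficient $(1-\rho)\lambda_0\int_0^\infty R_1$ remains bounded (giving boundedness of $\mathbb E[(1-\rho)Q]$, and indeed the exact value $\lambda_0\mathbb E[J]$ as you note), while the $(1-z)^\alpha$-coefficient $(1-\rho)^\alpha\lambda_0\int_0^\infty R_\alpha$ \emph{diverges}, and this divergence is what the paper uses to conclude $\mathbb E[X^\alpha]=\infty$. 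You instead propose a heavy-traffic scaling of the fixed-point equation (\ref{fixedpointeta}) --- macroscopic time $v=(1-\rho)u$, amplitude $1-\eta=(1-\rho)g_\rho$ --- leading to a limiting Laplace exponent $\Psi$; this is a ``process-level'' argument in the style of heavy-traffic limit theorems and would, if carried through, give a sharper description of $X$ than the paper offers.

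The gap, however, sits exactly at the claim you flag at the end as ``the delicate point.'' You conjecture that $\Psi(\lambda)-\lambda_0\mathbb E[J]\lambda\sim -c\lambda^\alpha$ with $c>0$ \emph{finite}. But under the reparametrisation $z=e^{-\lambda(1-\rho)}$, the $\lambda^\alpha$-coefficient coming from the expansion used in the paper's own proof is $(1-\rho)^\alpha\lambda_0\int_0^\infty R_\alpha$, which tends to $-\infty$ as $\rho\uparrow1$; it does \emph{not} converge to a finite $c$. So your programme requires showing that the term-by-term limit is the wrong one: the remainder $o((1-z)^\alpha)$ in (\ref{expansionznt}) must grow with $\rho$ so as to compensate the diverging $(1-z)^\alpha$-coefficient and leave a finite $\lambda^\alpha$-term in $\Psi$. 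This is a uniformity issue in $\rho$, not in $u$ as you phrase it, and it is the heart of the argument rather than a technical remainder estimate. Moreover, the boundary-layer/bulk decomposition you sketch does not by itself determine the limiting equation: plugging $I\approx(1-\rho)\|h\|_{L^1}g_\rho(v)$ and $1-\beta(x)\approx b_1x+C\Gamma(1-\alpha)x^\alpha$ into (\ref{fixedpointeta}) and using $b_1\|h\|_{L^1}=\rho$, the linear terms cancel to order $(1-\rho)^2$ while the nonlinear term is of order $(1-\rho)^\alpha$, so at leading order the equation degenerates and the drift term $g_\rho'(v)$ (from the next-order expansion of the convolution $I$) must be retained --- this is precisely where the renewal-theoretic behaviour of $R_1(v/(1-\rho))$ enters and it is not controlled in the sketch. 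In short: the approach is attractive and would, if made rigorous, prove a stronger statement ($X\in\mathscr R(-\alpha)$) than the paper's $\mathbb E[X^\alpha]=\infty$, but as written the central claim is asserted rather than derived and is in tension with the very divergence that the paper's proof exploits.
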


\section{Comparisons using stochastic ordering} \label{comparisons}

In this section, we consider a multivariate Hawkes-fed birth-death process $(\boldsymbol  N(t),\boldsymbol  Q(t),\boldsymbol\Lambda(t))_{t\in\mathbb R_+}$ with intensity $\Lambda_i(\cdot)$ in component $i$ given by \begin{equation}\Lambda_i(t)=\lambda_{i,0}+\sum_{j=1}^d\int_{-\infty}^tB_{ij}(s)h_{ij}(t-s)\ \mathrm dN_j(s),\label{condintH}\end{equation} where, for each $i,j\in[d]$, $(B_{ij}(s))_{s\in\mathbb R}$ is a collection of cross-sectionally and serially independently distributed random variables distributed as the a.s.\ positive random variable $B_{ij}$. 
We compare this process to the corresponding multivariate delayed Hawkes birth-death process $(\tilde{\boldsymbol  N}(t),\tilde{\boldsymbol  Q}(t),\tilde{\boldsymbol\Lambda}(t))_{t\in\mathbb R_+}$ having the same parameters; 
its intensity $\tilde\Lambda_i(\cdot)$ in coordinate $i$ is given by \begin{equation}\tilde\Lambda_i(t)=\lambda_{i,0}+\sum_{j=1}^d\int_{-\infty}^tB_{ij}(s)h_{ij}(t-s)\ \mathrm d\tilde D_j(s),\label{condintDH}\end{equation} where $\tilde D_j(\cdot)$ denotes the departure process of the $j$-th coordinate. 

In the univariate case, both systems can be specified through conditional intensities of the form 
\begin{equation}\Lambda(t)=\lambda_0+\sum_{t_i< t}B_ih(t-t_i),\label{intensityDH/H}
\end{equation} 
the only difference being that 
in the former, classical case $(t_i)_{i\in\mathbb N}$ denote arrival times for the Hawkes-fed birth-death process, whereas in the latter, delayed case $(t_i)_{i\in\mathbb N}$ denote departure times for the delayed Hawkes birth-death process. 
To argue that the delayed Hawkes process is in a sense `dominated' by the Hawkes process, we consider a comparison using stochastic ordering. 
\begin{definition}
Let $X,Y$ be random variables. 
We say that $X$ is larger than $Y$ in the stochastic order, or, equivalently, that $X$ dominates $Y$, if $F_X(z)\leq F_Y(z)$ for all $z\in\mathbb R$. 
We write $X\geq_{\mathrm{st}}Y$.
\end{definition}
Another way of representing both birth-death processes is by considering their respective cluster process representations, as given via Definition~\ref{DHcluster2}. 
Then, by an obvious coupling, the baseline intensity $\boldsymbol\lambda_0$ generates the same stream of immigrants, and therefore the same multiplicity of clusters, for both birth-death processes. 
Coupling those clusters as well, it is clear that $\boldsymbol N(t)\geq_{\mathrm{st}}\tilde{\boldsymbol N}(t)$ for all $t\geq0$, since the clusters produce the same offspring for both processes, but $k$-th generation children are counted $k$ lifetimes later for $\tilde N(t)$ than for $N(t)$.
This observation immediately raises the question whether we can compare $\boldsymbol Q(t)$ to $\tilde{\boldsymbol Q}(t)$ and $\boldsymbol\Lambda(t)$ to $\tilde{\boldsymbol \Lambda}(t)$ in the stochastic order as well. 
This question is answered affirmatively by the following theorem.


\begin{theorem}\label{thstochasticdomination}
Let $(\boldsymbol  N(t),\boldsymbol  Q(t),\boldsymbol\Lambda(t))_{t\in\mathbb R_+}$  be a multivariate Hawkes-fed birth-death process, with conditional intensity given by \eqref{condintH}. 
Furthermore, let  $(\tilde{\boldsymbol  N}(t),\tilde{\boldsymbol  Q}(t),\tilde{\boldsymbol\Lambda}(t))_{t\in\mathbb R_+}$ denote the corresponding delayed Hawkes birth-death process having the same parameters, i.e., its conditional intensity satisfies \eqref{condintDH}. 
Also assume both systems have the same sojourn time distributions $J_i$, having CDF $\bar{\mathscr J}_i$.
For both systems, let the $(B_{ij}(s))_{s\geq0}$ be i.i.d., independent of other random variables driving the processes.  
In both cases, suppose that we start in an empty system with zero arrivals, and a conditional intensity equal to the baseline intensity $\boldsymbol\lambda_0$. 
Then we have for all $j\in[d]$ and for all $t\geq0$, $N_j(t)\geq_{\mathrm{st}}\tilde N_j(t)$, $Q_j(t)\geq_{\mathrm{st}}\tilde Q_j(t)$ and $\Lambda_j(t)\geq_{\mathrm{st}}\tilde \Lambda_j(t)$.
\end{theorem}

\begin{proof}
We first prove the result for univariate processes, after which we extend the arguments to multivariate processes.
The proof relies on the cluster representation as given in Definition~\ref{DHcluster2} with excitation functions specified in Eqn.~\ref{eq:general}.

\textit{Univariate case}. Consider the conditional intensity  processes $\Lambda(\cdot)$ and $\tilde\Lambda(\cdot)$. 
For Hawkes, set $$\Lambda_{k\mathrm G}(t))=\begin{cases}
    \lambda_0,&\text{ if }k=0,\\
    \displaystyle\sum_{t_i<t\text{ of generation }k-1}B_ih(t-t_i),&\text{ if }k\in\mathbb N.
\end{cases}$$ 
Define $\tilde \Lambda_{kG}$ similarly for delayed Hawkes.
In the following, we consider the \textit{a priori}  
arrival intensity processes of $k$-th generation offspring, $\mathbb E[\Lambda_{kG}|\mathcal H_0]$ and $\mathbb E[\tilde\Lambda_{kG}|\mathcal H_0]$. Note that $\Lambda_{0\mathrm G}(t)=\lambda_0=\tilde\Lambda_{0\mathrm G}(t)$. 

We say that a cluster \textit{starts} when the excitation starts; for Hawkes this is at the birth of a particle, for delayed Hawkes at expiration of a particle.
This means that for the delayed Hawkes process, at time $t$, \textit{starting} clusters arrive at rate $\int_0^t\lambda_0\ \mathrm d\bar{\mathscr J}(s)=\lambda_0\bar{\mathscr J}(t)\leq\lambda_0$.
Let $(\Omega,\mathcal F,\mathbb Q)=(\mathbb R_+^2,\mathcal F_B\otimes\mathcal F_J,\mathcal Q\otimes\bar{\mathscr J})$ be the probability space on which the marks $B$ and lifetimes $J$ are defined jointly.
For $k\geq0$, define the $k$-th cluster of a delayed Hawkes process $\tilde N$
recursively w.r.t.\ i.i.d.\ Poisson random measures (PRMs) $(M_k)_{k\in\mathbb N_0}$ on $\mathbb R\times\mathbb R_+\times\Omega$ with intensity $\mathrm dt\times\mathrm ds\times\left(\mathcal Q(dz)\otimes\mathrm d\bar{\mathscr J}(w)\right)$ by 
\begin{align}
     \tilde N_{kG}(A\times B)&=\int_{A\times\mathbb R_+\times B}\mathbf1_{[0,\tilde\Lambda_{kG}(t)]}(s) \ M_k(\mathrm dt\times\mathrm ds\times d(z,w)),\quad A\times B\in\mathcal B(\mathbb R)\otimes\mathcal F.\label{kth g DH}
\end{align}

Couple a fraction $\bar{\mathscr J}(t)$ of Hawkes clusters to delayed Hawkes clusters starting at the same time:
\begin{align}
    N_{0G}(A\times B)&=\int_{A\times B}\mathbf1_A(u+w) \ \tilde N_{0G}(\mathrm du\times\mathrm d(z\times w))\tag{$\mathrm{Poi}(\bar{\mathscr J}(t))$ \text{ stream}}
    \nonumber\\&+\int_{A\times\mathbb R_+\times B}\mathbf1_{[0,(1-\bar{\mathscr J}(t))\Lambda_{0G}(t)]}(s) \ M'_0(\mathrm dt\times\mathrm ds\times d(z,w))\nonumber\\&=:N_{0G,1}+N_{0G,2}, \quad A\times B\in\mathcal B(\mathbb R)\otimes\mathcal F,\label{0th g H}
\end{align}
where $M_k', k\in\mathbb N_0$ are independent PRMs with the same distribution as $M_0$. 
Since the second term of \eqref{0th g H} is nonnegative, the `above-baseline' intensity caused by immigrant arrivals of the Hawkes process stochastically dominates that of the delayed Hawkes process: $\Lambda_{1\mathrm G}(t)\geq_{\mathrm{st}}\tilde\Lambda_{1\mathrm G}(t)$.

We now consider the arrivals of subclusters: for the Hawkes process this happens at arrivals of first-generation offspring, while for the delayed Hawkes process this happens when first-generation offspring leaves the system. 
We note that the \textit{a priori} expected arrival rate for first-generation offspring increases over time, since for such an arrival we have to go through multiple stages: immigrant arrival, sojourn time $J$ (only for delayed Hawkes), and arrival triggered by excitation caused by an immigrant arrival; here, we use that we start from an empty system. 

The arrival intensity of starting second-generation clusters for the delayed Hawkes process equals the arrival rate of first-generation offspring convoluted with $\bar{\mathscr J}$. 
Since 
$\mathbb E[\tilde\Lambda_{1\mathrm G}(t)|\mathcal H_0]$ is increasing and since the convolution averages over the past, it follows that the expected arrival rate of starting subclusters for the delayed Hawkes process is dominated by the expected arrival rate of first-generation offspring (i.e., starting subclusters) for the Hawkes process resulting from the immigrants $N_{0G,1}$; 
denote the ratio between the two at time $t$ by $\vartheta(t)\in[0,1]$. 
In \eqref{0th g H}, we coupled a fraction of Hawkes clusters to delayed Hawkes clusters starting at the same time through 
$N_{0G,1}$. 
Denote the increase in intensity for the Hawkes process resulting from the immigrants $N_{0G,1}$ by $\Lambda_{0G,1}$. 
Refine the previous coupling by coupling a fraction of starting subclusters resulting from the particles $N_{0G,1}$ for the Hawkes process to delayed Hawkes subclusters starting at the same time, through 
\begin{align}
    N_{1G,0}(A\times B)&=\int_{A\times B}\mathbf1_A(u+w) \ \tilde N_{1G}(\mathrm du\times\mathrm d(z\times w))\tag{$\mathrm{Poi}(\vartheta(t))$ \text{ stream}}
    \nonumber\\&+\int_{A\times\mathbb R_+\times B}\mathbf1_{[0,(1-\vartheta(t))\Lambda_{1G,1}(t)]}(s)\ M'_1(\mathrm dt\times\mathrm ds\times d(z,w))\nonumber\\&=:N_{1G,1}+N_{1G,2}, \quad A\times B\in\mathcal B(\mathbb R)\otimes\mathcal F.\label{1th g H}
\end{align}
As $N_{1G,1}$ is coupled to the stream of starting second-generating clusters for delayed Hawkes, we conclude that $\Lambda_{2\mathrm G}(t)\geq_{\mathrm{st}}\tilde\Lambda_{2\mathrm G}(t)$. 

The argument of the previous paragraph can be repeated inductively for any $k\in\mathbb N$, obtaining $\Lambda_{k\mathrm G}(t)\geq_{\mathrm{st}}\tilde\Lambda_{k\mathrm G}(t)$ for all $k\in\mathbb N$. 
In any step, our coupling of $k$-th generation starting subclusters is a refinement of the previous coupling, and uses the \emph{genealogical order}. 
In the above construction, we coupled clusters, subclusters, subsubclusters, etc., and by the independency structure inherent in the cluster representation it follows that $\sum_{k=0}^n\Lambda_{k\mathrm G}(t)\geq_{\mathrm{st}}\sum_{k=0}^n\tilde\Lambda_{k\mathrm G}(t)$ for all $n\geq0$.

Note that $\sum_{k=0}^n\Lambda_{k\mathrm G}(t)\stackrel{\mathcal D}\to\Lambda(t)$ as $n\to\infty$, for all $t\geq0$, and similarly for $\tilde\Lambda(t)$. 
Hence, for every continuity point $x$ of $F_{\Lambda(t)}$, 
\begin{align}
  \lim_{n\to\infty} F_{\sum_{k=0}^n\Lambda_{k\mathrm G}(t)}(x) =  F_{\Lambda(t)}(x),\notag
\end{align}
and similarly for every continuity point $x$ of $F_{\tilde\Lambda(t)}$, \[F_{\sum_{k=0}^n\tilde\Lambda_{k\mathrm G}(t)}(x)\to F_{\tilde\Lambda(t)}(x).\] 
Since any distribution function has at most countably many discontinuities, and using the fact that $F_{\sum_{k=0}^n\Lambda_{k\mathrm G}(t)}(x)\leq F_{\sum_{k=0}^n\tilde\Lambda_{k\mathrm G}(t)}(x)$ for all $x\in\mathbb R$ and $n\geq0$ by the stochastic ordering we established above, it immediately follows that $F_{\Lambda(t)}(x)\leq F_{\tilde\Lambda(t)}(x)$ for all but at most countably many $x$. 
By right-continuity of distribution functions, if this inequality does not hold at $z$, it does not hold for a continuum of values $x\in[z,z+\epsilon]$. 
Hence, this inequality holds for all $x\in\mathbb R$, and we conclude that $\Lambda(t)\geq_{\mathrm{st}}\tilde \Lambda(t)$. 

From this, we can decompose the conditional intensity $\Lambda(\cdot)$ of a Hawkes process as the sum of the intensity $\tilde\Lambda(\cdot)$ of a delayed Hawkes process with the same parameters, and the nonnegative process $(\Lambda-\tilde\Lambda)(\cdot)$ consisting of the remaining intensity. 
By coupling arrivals and setting sojourn times equal, it follows that $Q(t)\geq_{\mathrm{st}}\tilde Q(t)$ and $N(t)\geq_{\mathrm{st}}\tilde N(t)$, as claimed.

\textit{Multivariate case}.
Suppose that an immigrant in coordinate $i_0$ produces offspring in coordinate $i_1$, which in turn produces offspring in coordinate $i_2$, and so on, until there is a child in coordinate $i_n$. 
Write $i_0i_1\ldots i_n$ for the path indicating this order of visited coordinates. 
By analogy to the univariate case, let $\Lambda_{i_0i_1\ldots i_n}$ and $\tilde\Lambda_{i_0i_1\ldots i_n}$ be the \textit{a priori} arrival intensities of $n$-th generation offspring in coordinate $i_n$ through the order $i_0i_1\ldots i_n$ for the Hawkes and the delayed Hawkes process, respectively. 
As in the univariate case, it can be argued that $\Lambda_{i_0i_1\ldots i_n}(t)\geq_{\mathrm{st}}\tilde\Lambda_{i_0i_1\ldots i_n}(t)$ for each such path $i_0i_1\ldots i_n$, where couplings can be chosen as refinements of the couplings for the path $i_0i_1\ldots i_{n-1}$. By using the conditional independency structure inherent in the cluster representation and by summing over all possible paths $i_0i_1\ldots i_n$, $n\in\mathbb N_0$, $i_k\in[d]$, it follows that for each $j\in[d]$, $\Lambda_j(t)\geq_{\mathrm{st}}\tilde \Lambda_j(t)$. By coupling arrivals and setting sojourn times equal, the other claims follow.
\end{proof}

We conclude this section by considering two univariate delayed Hawkes birth-death processes having different parameters that dominate each other, and indicate when one process dominates the other.
Denote those delayed Hawkes birth-death processes by $(N^{(j)}(\cdot),Q^{(j)}(\cdot),\Lambda^{(j)}(\cdot))$, $j=1,2$, in which we have arrivals generated by conditional intensities of the form \begin{equation}\Lambda^{(j)}(t)=\lambda_{0}^{(j)}+\sum_{t_{i}^{(j)}< t}B_{i}^{(j)}h^{(j)}(t-t_{i}^{(j)}),\label{intensityDH/H2}\end{equation}where $(t_i^{(j)})$ denote departure times from system $j$, and where $B_i^{(j)}\stackrel{\mathrm{iid}}\sim B^{(j)}$. 
For system $j$, we have i.i.d.\ departures distributed as $J^{(j)}$. 
If the baseline intensity, mark distribution or excitation function of system $1$ dominates that of system $2$, or if the sojourn time of system $2$ dominates that of system~$1$, we would expect system $1$ to stochastically dominate system $2$. 
Those conjectures are confirmed by the next theorem.

\begin{theorem}\label{stochdompar}
Suppose that system $1$ and system $2$ satisfy the following conditions:
\renewcommand{\theenumi}{\roman{enumi}}
\begin{enumerate}
\item $\lambda_{0}^{(1)}\geq\lambda_{0}^{(2)}$;
\item $B^{(1)}\geq_{\mathrm{st}}B^{(2)}$;
\item $h^{(1)}(t)\geq h^{(2)}(t)$ for almost all $t$;
\item $J^{(1)}\leq_{\mathrm{st}}J^{(2)}$.
\end{enumerate}
Then $N^{(1)}(t)\geq_{\mathrm{st}}N^{(2)}(t)$, $Q^{(1)}(t)\geq_{\mathrm{st}}Q^{(2)}(t)$ and $\Lambda^{(1)}(t)\geq_{\mathrm{st}}\Lambda^{(2)}(t)$ for all $t\geq0$.
\end{theorem}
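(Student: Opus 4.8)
The plan is to mimic the cluster-representation argument from the proof of Theorem~\ref{thstochasticdomination}, but now threading through the four hypotheses simultaneously rather than comparing a Hawkes to a delayed Hawkes process with identical parameters. First I would set up a joint coupling of the two delayed Hawkes systems built on their cluster representations (Definition~\ref{DHcluster2} with excitation functions $h_{J,\omega}(\cdot)=B_\omega h(\cdot-J)\mathbf1\{\cdot>J\}$ as in Remark~\ref{remarkgeneralised}). Since $\lambda_0^{(1)}\geq\lambda_0^{(2)}$, by thinning I can couple the immigration streams so that every immigrant of system~$2$ is also an immigrant of system~$1$ occurring at the same time; the extra immigrants of system~$1$ only add clusters. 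For each shared immigrant, I would couple the two cluster processes generation by generation, exactly as in Theorem~\ref{thstochasticdomination}, but now also coupling the marks (using $B^{(1)}\geq_{\mathrm{st}}B^{(2)}$, realise $B^{(1)}\geq B^{(2)}$ pointwise) and the service times (using $J^{(1)}\leq_{\mathrm{st}}J^{(2)}$, realise $J^{(1)}\leq J^{(2)}$ pointwise).

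Next I would run the inductive argument on generations. Let $\Lambda^{(j)}_{k\mathrm G}(\cdot)$ denote the a priori ($\mathcal H_0$-expected) arrival intensity of $k$th-generation offspring for system~$j$, as in the proof of Theorem~\ref{thstochasticdomination}. For $k=0$ we have $\Lambda^{(1)}_{0\mathrm G}\equiv\lambda_0^{(1)}\geq\lambda_0^{(2)}\equiv\Lambda^{(2)}_{0\mathrm G}$. For the inductive step, the a priori rate at which a $k$th-generation particle in system~$j$ triggers a starting subcluster is, conditionally on the arrival-time density of that particle, the convolution of that density with the service-time law shifted by the excitation kernel $B^{(j)}h^{(j)}$. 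Three monotonicity inputs combine: (a) the density of $k$th-generation arrivals of system~$1$ stochastically dominates that of system~$2$ by the induction hypothesis; (b) these a priori generational arrival-rate functions are increasing in $t$ on an initially empty system (same observation as in Theorem~\ref{thstochasticdomination}), so that convolving with the \emph{smaller} service-time law $J^{(1)}$ — which averages over a less-delayed past — yields a pointwise-larger result; (c) $B^{(1)}h^{(1)}(t)\geq B^{(2)}h^{(2)}(t)$ for a.e.\ $t$ scales the kernel upward. Putting (a)--(c) together gives $\Lambda^{(1)}_{(k+1)\mathrm G}(t)\geq_{\mathrm{st}}\Lambda^{(2)}_{(k+1)\mathrm G}(t)$, with the coupling of $(k+1)$th-generation starting subclusters taken as a refinement of the $k$th-generation coupling, respecting the genealogical order. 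Summing over generations and using the independence structure of the cluster representation yields $\sum_{k=0}^n\Lambda^{(1)}_{k\mathrm G}(t)\geq_{\mathrm{st}}\sum_{k=0}^n\tilde\Lambda^{(2)}_{k\mathrm G}(t)$ for all $n$, and passing to the limit in $n$ exactly as in Theorem~\ref{thstochasticdomination} (continuity-point argument plus right-continuity of CDFs) gives $\Lambda^{(1)}(t)\geq_{\mathrm{st}}\Lambda^{(2)}(t)$.

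Finally, from $\Lambda^{(1)}\geq_{\mathrm{st}}\Lambda^{(2)}$ I would decompose $\Lambda^{(1)}(\cdot)$ as $\Lambda^{(2)}(\cdot)$ plus a nonnegative residual intensity, couple the arrival streams so that every arrival of system~$2$ is an arrival of system~$1$ at the same epoch, and set the (coupled) service requirements equal on the shared arrivals. Then every particle present in system~$2$ at time $t$ is present in system~$1$, so $Q^{(1)}(t)\geq_{\mathrm{st}}Q^{(2)}(t)$ and $N^{(1)}(t)\geq_{\mathrm{st}}N^{(2)}(t)$. The main obstacle is step~(b) above: one must argue carefully that the a priori generational arrival-rate functions remain monotone nondecreasing under the \emph{combined} perturbation of all four parameters (not just service time, as in Theorem~\ref{thstochasticdomination}), and that a stochastically larger, monotone-increasing arrival-density convolved with a stochastically smaller delay produces a pointwise-larger (hence stochastically larger) offspring rate. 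This requires keeping the stochastic-domination bookkeeping and the "convolution averages over the past" lemma consistent through every generation; once that monotone-convolution comparison is nailed down, the induction and the limiting argument are routine adaptations of Theorem~\ref{thstochasticdomination}.
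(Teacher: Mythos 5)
Your proposal is correct, but it takes a somewhat different organizational route from the paper's. The paper's (sketch) proof treats each hypothesis (i)--(iv) \emph{separately}: for (i)--(iii) one partly couples the dominating parameter to the dominated one and observes that the surplus ($\lambda_0^{(1)}-\lambda_0^{(2)}$, $B^{(1)}-B^{(2)}$, $h^{(1)}-h^{(2)}$) contributes a nonnegative extra stream of immigrants/marks/excitation, which by the monotone recursive structure of the cluster representation can only increase $N$, $Q$, and $\Lambda$; for (iv) one repeats the monotone-convolution argument of Theorem~\ref{thstochasticdomination}; and one then gets the joint statement by transitivity of $\geq_{\mathrm{st}}$, introducing intermediate systems that change one parameter at a time. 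This modular structure isolates exactly where the Theorem~\ref{thstochasticdomination}-style argument is needed (only for the service-time change) and where a simpler one-shot coupling suffices. You instead run a single generational induction with all four perturbations active at once. That also works, and your chain $\sigma^{(1)}_k \geq \int \rho^{(2)}_k(\cdot-w)\,\mathrm d\bar{\mathscr J}^{(1)}(w) \geq \sigma^{(2)}_k$ is exactly the right move, but it forces you to track three inequalities simultaneously in every step and makes the write-up heavier. One small clarification on the ``main obstacle'' you flag in step (b): monotonicity of the a priori $k$th-generation arrival-rate function is a \emph{within-system} property (a consequence of starting empty and passing through successive integration stages), not a property of the comparison; it holds for each of $\rho_k^{(1)}$ and $\rho_k^{(2)}$ separately regardless of how the parameters differ, so the ``combined perturbation'' does not threaten it -- what you actually need, and correctly state, is that a pointwise-larger increasing density convolved with a stochastically smaller delay stays pointwise-larger, which follows from the standard stochastic-order inequality $\mathbb E[g(J^{(1)})]\geq\mathbb E[g(J^{(2)})]$ for decreasing $g$. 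Either route is fine; the paper's modular version is cleaner to check, while yours makes the coupling explicit in one pass.
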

\begin{proof}
It suffices to consider the case where just one of the conditions (i)-(iv) holds strictly.
For example, if (i) and (ii) hold strictly, we select an intermediate process $N^{(3)}$ with $\lambda_0^{(3)}=\lambda_0^{(2)}$ and $B^{(3)}=B^{(1)}$, and use our arguments to arrive at $N^{(1)}(t)\geq_{\mathrm{st}}N^{(3)}(t)\geq_{\mathrm{st}}N^{(2)}(t)$.

For (i)--(iii), the proof is straightforward: it uses the cluster representation, and relies on an easy coupling argument, by partly coupling the parameter of system $1$ to the corresponding one of system $2$,
with the remaining part generating a positive stream. 
For (iv), we argue as in the univariate case of the proof of Theorem~\ref{thstochasticdomination}. 
\end{proof}

\section{Networks of Markovian delayed Hawkes birth-death processes} \label{markov}
Next, we specify to (networks of) the Markovian delayed Hawkes process, which allows us to set up a more concrete characterization of the transform than the one found in Section~\ref{chapternonmarkov}, and to formulate a recursive procedure for calculating the joint moments of $(\boldsymbol Q(t),\boldsymbol\Lambda(t))$; see Section~\ref{networksmarkov}. 
In the univariate case, this leads to a system of ODEs involving a Clement-Kac-Sylvester matrix, which can be solved explicitly; see Section~\ref{markovtransient}. 
Furthermore, using the results of Section~\ref{comparisons}, we are able to describe the steady-state behavior of univariate delayed Hawkes birth-death processes in Section~\ref{subsectionsteadystate}.

\subsection{Networks of birth-death processes}\label{networksmarkov}
Networks of birth-death processes with shot-noise driven arrival rates have been studied in \cite{shotnoise}. 
Although networks of Hawkes processes have been introduced in \cite{Hawkesnetwork}, to the best of our knowledge, there is no account in the literature of the exact transient behavior of such processes. 
In this subsection, we analyze transient behavior for a network of Markovian delayed Hawkes birth-death processes. 
After obvious modifications, this analysis can be adapted to networks of (classical) Hawkes-fed birth-death processes. 
Furthermore, by setting $\mu_{ij}=0$ for all $i,j\in[d]$, see Definition~\ref{multivariatedef}, our analysis applies to the multivariate delayed Hawkes (point) process as well.

We first characterize the distribution of the Markovian network process from Definition~\ref{multivariatedef} by deriving a PDE for the joint Z- and Laplace transform of $(\boldsymbol  Q(\cdot),\boldsymbol\Lambda(\cdot))$, given by 
\begin{equation}\zeta(t,\boldsymbol  z,\boldsymbol  s)=\mathbb E\left[\boldsymbol  z^{\boldsymbol  Q(t)}e^{-\boldsymbol  s^\top\boldsymbol\Lambda(t)}\right]=\mathbb E\left[\prod_{j=1}^dz_j^{Q_j(t)}e^{-s_j\Lambda_j(t)}\right],\label{defjointtransform}
\end{equation}
where $\boldsymbol  z\in[-1,1]^d,\boldsymbol  s\in\mathbb R_+^d$. For $\boldsymbol  x\in\mathbb R^d$, $\boldsymbol  n\in\mathbb N_0^d$, write $\boldsymbol  x^{\boldsymbol  n}=\prod_{j=1}^dx_j^{n_j}$. 
By analogy to the univariate, nondelayed case, see \cite{Infinite server queues}, we derive a PDE for $\zeta$, to which we apply the method of characteristics to reduce it to a system of ODEs. 
Furthermore, this PDE can be used to derive a system of ODEs for the joint moments. 
The proofs of the following two results can be found in Appendix~\hyperref[app A4]{E}.

\begin{theorem}\label{theorem7characterization}
For all $i,j\in[d]$, assume that $h_{ij}(t)=e^{-r_it}$, where $r_i>0$, and assume that $B_{ij}>0$ a.s.
Consider the (now Markovian) network of delayed Hawkes birth-death processes from Definition~\ref{multivariatedef}. 
Then the multivariate joint Z- and Laplace transform $\zeta(t,\boldsymbol  z,\boldsymbol  s)$ satisfies the following PDE: \begin{align}\nonumber&\phantom=\frac{\partial \zeta(t,\boldsymbol  z,\boldsymbol  s)}{\partial t}+\sum_{j=1}^d(r_js_j+z_j-1)\frac{\partial \zeta(t,\boldsymbol  z,\boldsymbol  s)}{\partial s_j}+\sum_{j=1}^d\mu_j(z_j-\beta_j(\boldsymbol  s))\frac{\partial \zeta(t,\boldsymbol  z,\boldsymbol  s)}{\partial z_j}\\
&+\sum_{j=1}^d\sum_{i=1}^d\mu_{ij}(z_j-z_i)\frac{\partial \zeta(t,\boldsymbol  z,\boldsymbol  s)}{\partial z_j}=-\zeta(t,\boldsymbol  z,\boldsymbol  s)\sum_{j=1}^dr_j\lambda_{j,0}s_j,\label{multivariatePDE}
\end{align} 
where $\beta_j(\boldsymbol  s)=\mathbb E[e^{-\boldsymbol  s^\top \boldsymbol  B_j}]$ is the multivariate Laplace transform of $\boldsymbol  B_j$.

Furthermore, given initial conditions $\boldsymbol  Q(0)=\boldsymbol 0$ and $\boldsymbol\Lambda(0)=\boldsymbol\lambda_0$, we have \begin{equation}\zeta(t,\boldsymbol  z,\boldsymbol  s)=\prod_{j=1}^d\exp\left(-\lambda_{j,0}\left(s_j(t)+r_j\int_0^ts_j(u)\ \mathrm du\right)\right),\end{equation} where $s_j(\cdot)$, $j\in[d]$, solve the system of ODEs 
\begin{align}s_j'(u)&=-r_js_j(u)-z_j(u)+1;\nonumber\\z_j'(u)&=\mu_j(\beta_j(s(u))-z_j(u))+\sum_{i=1}^d\mu_{ij}(z_i(u)-z_j(u)),\quad0\leq u\leq t,
\end{align} 
with boundary conditions $s_j(0)=s_j$ and $z_j(0)=z_j$.
\end{theorem}

\begin{theorem}\label{corjointmom}
For $q,Q\in\mathbb N_0$, let $\bar Q^q:=Q(Q-1)(Q-q+1)$ be the falling factorial, with $\bar Q^0:=1$ and $\bar Q^{-1}:=0$. Write $b_{kj}=\mathbb E[B_{kj}]$. Next, for $\boldsymbol g,\boldsymbol \ell\in\mathbb N_0^d$, write 
\begin{align}
  \binom{\boldsymbol g}{\boldsymbol \ell}:=\prod_{j=1}^d\binom{g_j}{l_j}.   
\end{align} 
Furthermore, for $\boldsymbol  q\in\mathbb N_0^d, \boldsymbol  Q\in\mathbb N_0^d$, write $\bar{\boldsymbol  Q}^{\boldsymbol  q}:=\prod_{j=1}^d\bar Q_j^{q_j}$. 
Let $\circ$ be the Hadamard product.
Then we have the following differential equation for the joint moments of $\boldsymbol Q(t),\boldsymbol\Lambda(t)$:
\begin{align}
 &\phantom=\nonumber\frac{\mathrm d}{\mathrm dt}\mathbb E\left[\bar{\boldsymbol  Q}^{\boldsymbol  q}(t)\boldsymbol\Lambda^{\boldsymbol g}(t)\right]
 +\|\boldsymbol g\circ \boldsymbol  r+\boldsymbol  q\circ\boldsymbol\mu\|_1\mathbb E\left[\bar{\boldsymbol  Q}^{\boldsymbol  q}(t)\boldsymbol\Lambda^{\boldsymbol g}(t)\right]
 -\sum_{j=1}^dq_j\mathbb E\left[\bar{\boldsymbol  Q}^{\boldsymbol  q-\boldsymbol  e_j}(t)\boldsymbol\Lambda^{\boldsymbol g+\boldsymbol  e_j}(t)\right]\\
 &\phantom=-\sum_{j=1}^d\sum_{k=1}^d\mu_jg_kb_{kj}\mathbb E\left[\bar{\boldsymbol  Q}^{\boldsymbol  q+\boldsymbol  e_j}(t)\boldsymbol\Lambda^{\boldsymbol g-\boldsymbol  e_k}(t)\right]\nonumber\\
&\phantom=+\sum_{j=1}^d\sum_{i=1}^d\mu_{ij}\left(q_j\mathbb E\left[\bar{\boldsymbol Q}^{\boldsymbol q}(t)\boldsymbol\Lambda^{\boldsymbol g}(t)\right]-q_i\mathbb E\left[\bar{\boldsymbol Q}^{\boldsymbol q+\boldsymbol e_j-\boldsymbol e_i}(t)\boldsymbol\Lambda^{\boldsymbol g}(t)\right]\right)\nonumber
 \\&=\sum_{j=1}^dg_jr_j\lambda_{j,0}\mathbb E\left[\bar{\boldsymbol  Q}^{\boldsymbol  q}(t)\boldsymbol\Lambda^{\boldsymbol g-\boldsymbol  e_j}(t)\right]+\sum_{j=1}^d\mu_j\sum_{\substack{\boldsymbol0\leq\boldsymbol{\ell}\leq\boldsymbol g\\\|\boldsymbol\ell\|_1\leq\|\boldsymbol g\|_1-2}}\binom{\boldsymbol g}{\boldsymbol\ell}\mathbb E\left[\boldsymbol  B_j^{\boldsymbol g-\boldsymbol\ell}\right]\mathbb E\left[\bar{\boldsymbol  Q}^{\boldsymbol  q+\boldsymbol  e_j}(t)\boldsymbol\Lambda^{\boldsymbol\ell}(t)\right].\label{ODEmomentscompact}\end{align}
\end{theorem}

Eqn.~\eqref{ODEmomentscompact} allows us to devise a recursive procedure to find the joint moments of arbitrary order.
Indeed, the left-hand side of \eqref{ODEmomentscompact} expresses a joint moment of order $n=\|(\boldsymbol  q,\boldsymbol g)\|_1$ as a linear ODE dependent on joint moments of equal order, whereas the right-hand side contains a forcing term, consisting of lower-order moments only. 
In general, we can find the $(n+1)$-th order moments by solving a linear system of ODEs with forcing constant dependent on the moments of order up to $n$. 
Since the system for the first-order moments does not contain unknown quantities, this provides us with a recursive procedure for expressing the moments of a network of delayed Hawkes birth-death processes in the moments of the mark random variables, in the exponential decay rates $r_i$, and in the departure and rerouting rates $\mu_i$, $\mu_{ij}$. 

\begin{remark}
To find the moments of order $n$, we need to solve a system of ODEs of dimension $\binom{n+2d-1}{n}$. 
The ODEs are found by substituting all possible $(\boldsymbol  q,\boldsymbol g)$ into \eqref{ODEmomentscompact} satisfying $\|(\boldsymbol  q,\boldsymbol g)\|_1=n$.
\end{remark}

\subsection{Transient behavior of the univariate delayed Hawkes birth-death process}\label{markovtransient}
We now specify to the univariate case with $h(t)=e^{-rt}$,  since in this setting we can be more specific about the moments of $(Q(t),\Lambda(t))$. 
Specifying \eqref{ODEmomentscompact} to the univariate case $d=1$, we obtain the following ODE: 
 \begin{align}
 \nonumber&\phantom=\frac{\mathrm d}{\mathrm dt}\mathbb E\left[\bar Q^{q}(t)\Lambda^g(t)\right]+(gr+q\mu)\mathbb E\left[\bar Q^{q}(t)\Lambda^g(t)\right]-q\mathbb E\left[\bar Q^{q-1}(t)\Lambda^{g+1}(t)\right]\\&=\boldsymbol 1\{g\geq1\}gr\lambda_0\mathbb E\left[\bar Q^{q}(t)\Lambda^{g-1}(t)\right]+\boldsymbol 1\{g\geq1\}\mu\sum_{j=0}^{g-1}\binom gj\mathbb E\left[B^{g-j}\right]\mathbb E\left[\bar Q^{q+1}(t)\Lambda^{j}(t)\right].\label{gsqz}
 \end{align}
We wish to derive a system of ODEs for the joint moments of order $n\in\mathbb N$, which we accomplish by taking a combination of indices $g=k$, $q=n-k$, $k\in\{0,1,\ldots,n\}$, for which (\ref{gsqz}) reads 
\begin{align}
\nonumber&\phantom=\frac{\mathrm d}{\mathrm dt}\mathbb E\left[\bar Q^{n-k}(t)\Lambda^{k}(t)\right]+(kr+(n-k)\mu)\mathbb E\left[\bar Q^{n-k}(t)\Lambda^k(t)\right]-(n-k)\mathbb E\left[\bar Q^{n-k-1}(t)\Lambda^{k+1}(t)\right]\\\nonumber&-\mu kb_1\mathbb E\left[\bar Q^{n-k+1}(t)\Lambda^{k-1}(t)\right]\\
&=\boldsymbol 1\{k\geq1\}kr\lambda_0\mathbb E\left[\bar Q^{n-k}(t)\Lambda^{k-1}(t)\right]+\boldsymbol 1\{k\geq2\}\mu\sum_{j=0}^{k-2}\binom kj\mathbb E\left[B^{k-j}\right]\mathbb E\left[\bar Q^{n-k+1}(t)\Lambda^{j}(t)\right],
\end{align} 
where $b_1=\mathbb E[B]$. 
Letting $$Z^{(n+1)}(t):=\begin{bmatrix}\mathbb E\left[\bar Q^{n}(t)\right]&\mathbb E\left[\bar Q^{n-1}(t)\Lambda(t)\right]&\cdots&\mathbb E\left[\bar Q^{1}(t)\Lambda^{n-1}(t)\right]&\mathbb E\left[\Lambda^{n}(t)\right]\end{bmatrix}^\top,$$ $$A^{(n+1)}=\begin{bmatrix}-a_0^{(n)}&n&0&\cdots&0&0\\\mu b_1&-a_1^{(n-1)}&n-1&\cdots&0&0\\0&2\mu b_1&-a_2^{(n-2)}&\ddots&0&0\\\vdots&\vdots&\ddots&\ddots&\vdots&\vdots\\0&0&0&\cdots&-a_{n-1}^{(1)}&1\\0&0&0&\cdots&n\mu b_1&-a_n^{(0)}\end{bmatrix},\quad C^{(n+1)}(t)=\begin{bmatrix}c_0^{(n)}(t)\\c_1^{(n-1)}(t)\\c_2^{(n-2)}(t)\\\vdots\\c_{n-1}^{(1)}(t)\\c_n^{(0)}(t)\end{bmatrix},$$ 
where $a_k^{(n-k)}=kr+(n-k)\mu=n\mu+k(r-\mu)$ and 
$$c_k^{(n-k)}(t)=\boldsymbol 1\{k\geq1\}kr\lambda_0\mathbb E\left[\bar Q^{n-k}(t)\Lambda^{k-1}(t)\right]+\boldsymbol 1\{k\geq2\}\mu\sum_{j=0}^{k-2}\binom kj\mathbb E\left[B^{k-j}\right]\mathbb E\left[\bar Q^{n-k+1}(t)\Lambda^{j}(t)\right]$$ 
it follows that 
\begin{equation}\frac{\mathrm d}{\mathrm dt}Z^{(n+1)}(t)=A^{(n+1)}Z^{(n+1)}(t)+C^{(n+1)}(t).\label{ODEZlinear}\end{equation}
Note that $A^{(n+1)}$ is a \emph{generalized Clement-Kac-Sylvester matrix}.
To solve this ODE, we need $C^{(n+1)}(t)$, which is a vector dependent on 
moments of order at most $n-1$, meaning that we can solve for the transient moments of the delayed Hawkes birth-death process recursively.
The proofs of the next two results are in Appendix~\hyperref[app A4]{E}.

\begin{theorem}\label{ODEZsol}
The solution to the ODE (\ref{ODEZlinear}) is \begin{equation}Z^{(n+1)}(t)=e^{A^{(n+1)}t}Z^{(n+1)}(0)+\int_0^te^{A^{(n+1)}(t-s)}C^{(n+1)}(s)\ \mathrm ds,\label{DHODEsol}\end{equation} 
where $Z^{(n+1)}(0)=\lambda_0^n\boldsymbol e_{n+1}$, with $\boldsymbol e_{n+1}$ the last standard unit vector in $\mathbb R^{n+1}$. The matrix exponential $e^{A^{(n+1)}t}$ can be calculated explicitly by \begin{equation}\label{matrixexpDH}e^{A^{(n+1)}t}=\sum_{k=0}^ne^{\lambda_k^{(n+1)}t}\prod_{\substack{j=0\\ j\neq k}}^n\frac{A^{(n+1)}-\lambda_j^{(n+1)}I_{n+1}}{\lambda_k^{(n+1)}-\lambda_j^{(n+1)}},\end{equation} where  $I_{n+1}$ is the $(n+1)\times(n+1)$ identity matrix and where \begin{equation}\label{A1eigenvalues}\lambda_k^{(n+1)}=-\frac n2(\mu+r)+\frac{n-2k}{2}\sqrt{(\mu-r)^2+4\mu b_1},\quad k=0,1,\ldots, n,\end{equation} are the eigenvalues of $A^{(n+1)}$. This implies that we have a stable system --- i.e., with $Z^{(n+1)}(t)$ converging, as $t\to\infty$ ---  if and only if the stability condition $b_1/r<1$ holds.
\end{theorem}


We are able to find the first-order moments in the stationary regime, by letting $t\to\infty$. 

\begin{theorem}\label{DHstationary}
Let $b_1:=\mathbb E[B]$. If the stability condition $b_1/r<1$ holds, then, as $t\to\infty$, 
\begin{equation}\label{stationaryconvergence}
\begin{bmatrix}\mathbb E[Q(t)]\\\mathbb E[\Lambda(t)]\end{bmatrix}\to\frac{r\lambda_0}{r-b_1}\begin{bmatrix}1/\mu\\1\end{bmatrix}.
\end{equation}
\end{theorem}

\subsection{Univariate delayed Hawkes birth-death processes in steady state}\label{subsectionsteadystate}

In the next corollary to Theorem~\ref{thstochasticdomination}, we describe the steady-state delayed Hawkes birth-death process $(\tilde Q(\infty),\tilde\Lambda(\infty))$ in the Markovian setting; its proof is in Appendix~\hyperref[app A4]{E}.

\begin{corollary}\label{steadystateequaldist}
In the Markovian setting with $h(t)=e^{-rt}$ and $J\sim\mathrm{Exp}(\mu)$, in steady state we have $Q(\infty)\stackrel{\mathcal D}=\tilde Q(\infty)$ and $\Lambda(\infty)\stackrel{\mathcal D}=\tilde \Lambda(\infty)$.
\end{corollary}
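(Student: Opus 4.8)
\emph{Proof proposal.} The plan is to bootstrap from the pathwise stochastic-ordering statement of Theorem \ref{thstochasticdomination} together with an equality of stationary first moments, exploiting the elementary fact that two nonnegative random variables that are ordered in the stochastic order and share the same finite mean must be equal in distribution.

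First I would pass the inequalities of Theorem \ref{thstochasticdomination} to the limit $t\to\infty$. In the Markovian regime $h(t)=e^{-rt}$, $J\sim\mathrm{Exp}(\mu)$, both $(Q(t),\Lambda(t))$ and $(\tilde Q(t),\tilde\Lambda(t))$ are piecewise-deterministic Markov processes which, under the stability condition $b_1/r<1$ in force here, are ergodic with a unique stationary law; started from the empty configuration one has $Q(t)\stackrel{\mathcal D}\to Q(\infty)$, $\Lambda(t)\stackrel{\mathcal D}\to\Lambda(\infty)$ and likewise for the delayed system, by the stability results of Section \ref{existence} (or by ergodicity of the PDMP associated with the generator displayed above; the convergence of all moments in Theorem \ref{ODEZsol} points the same way). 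Since the stochastic order is preserved under weak limits, the marginal inequalities $Q(t)\geq_{\mathrm{st}}\tilde Q(t)$ and $\Lambda(t)\geq_{\mathrm{st}}\tilde\Lambda(t)$ then upgrade to $Q(\infty)\geq_{\mathrm{st}}\tilde Q(\infty)$ and $\Lambda(\infty)\geq_{\mathrm{st}}\tilde\Lambda(\infty)$.

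Next I would check that the stationary first moments agree. For delayed Hawkes, Theorem \ref{DHstationary} gives $(\mathbb E[\tilde Q(\infty)],\mathbb E[\tilde\Lambda(\infty)])=\frac{r\lambda_0}{r-b_1}(1/\mu,1)$. For the Hawkes-fed $M/M/\infty$ queue the same values hold, either from the analogous moment recursion (cf.\ \cite{Infinite server queues}) or directly: in stationarity the departure rate equals the arrival rate $\mathbb E[\Lambda(\infty)]$, so taking expectations in $\Lambda(t)=\lambda_0+\sum_{t_i<t}B_ih(t-t_i)$ with $\|h\|_{L^1}=1/r$ yields $\mathbb E[\Lambda(\infty)]=\lambda_0+(b_1/r)\mathbb E[\Lambda(\infty)]$, hence $\mathbb E[\Lambda(\infty)]=\frac{r\lambda_0}{r-b_1}$, and $\mathbb E[Q(\infty)]=\mathbb E[\Lambda(\infty)]/\mu$ by Little's law for the infinite-server queue; the same balance relation holds for the delayed process since its departure process also has stationary rate $\mathbb E[\tilde\Lambda(\infty)]$. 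Thus the two mean vectors coincide and are finite.

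Finally I would invoke the lemma that if $X\geq_{\mathrm{st}}Y$ with $X,Y\geq0$ and $\mathbb E[X]=\mathbb E[Y]<\infty$ then $X\stackrel{\mathcal D}=Y$: indeed $0=\mathbb E[X]-\mathbb E[Y]=\int_0^\infty\big(\mathbb P(X>z)-\mathbb P(Y>z)\big)\,\mathrm dz$ is the integral of a nonnegative function, so $\mathbb P(X>z)=\mathbb P(Y>z)$ for a.e.\ $z$, hence for all $z$ by right-continuity of distribution functions. Applying this to $(\Lambda(\infty),\tilde\Lambda(\infty))$ and to $(Q(\infty),\tilde Q(\infty))$ gives the claim. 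The argument has no genuinely hard step; the only points requiring care are the appeal to ergodicity/stability for the existence of the stationary limits and the finiteness of the stationary means, both of which are already available from Section \ref{existence} and Theorems \ref{ODEZsol}--\ref{DHstationary}.
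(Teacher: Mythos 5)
Your proposal is correct and follows essentially the same route as the paper: pathwise stochastic domination from Theorem \ref{thstochasticdomination}, equality of the stationary first moments via Theorem \ref{DHstationary} (the paper cites \cite[Corollary 3.9]{Infinite server queues} for the Hawkes side where you give a brief balance/Little's-law derivation), and the closing argument that stochastic ordering plus equal finite means for nonnegative limits forces equality in distribution. The paper packages the last two of your steps (preservation of $\geq_{\mathrm{st}}$ under weak limits, then the mean-equality argument) into a single ad hoc lemma proved inline, but the mathematical content is identical.
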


\begin{remark}
    By combining Corollary \ref{steadystateequaldist} with \cite{Infinite server queues}, Corollaries 3.8 and 3.9, we find $\mathrm{Var}(\tilde N(\infty))$, $\mathrm{Cov}(\tilde N(\infty),\tilde\Lambda(\infty))$, and $\mathbb E[\tilde \Lambda^g(\infty)]$ for any $g\in\mathbb N$.
\end{remark}


In stationarity, the distribution of population sizes and intensities \textit{at a fixed time instant} are the same for Hawkes and delayed Hawkes. It should be borne in mind, however, that the dynamics of the two processes {\it are} different in stationarity, since an arrival does not increase the intensity  instantaneously for delayed Hawkes.

Corollary \ref{steadystateequaldist} has an appealing \emph{informal} explanation. 
In stationarity, the stream of particles entering and leaving a Hawkes-fed birth-death process are `in equilibrium'. 
Hence, starting in the stationary distribution of the Hawkes-fed birth-death process, excitation caused by arriving particles (as we have for Hawkes) equals excitation caused by departing particles (as we have for delayed Hawkes). 
For the Hawkes process, under the stationary distribution, the inward stream in intensity (caused by excitation) equals the outward stream (caused by exponential decay).
Hence, if the delayed Hawkes process starts in the stationary distribution of Hawkes, increase in intensity caused by departures (equals increase in intensity that we would see for Hawkes) equals the decrease caused by exponential decay. 
This indicates that this distribution is also stationary for delayed Hawkes.

Corollary~\ref{steadystateequaldist} allows us to describe heavy-traffic behavior for the delayed Hawkes birth-death process, assuming marks having finite second moments; cf.\ Corollary~\ref{heavytailstraffic} and see Appendix~\hyperref[app A4]{E} for the proof.

\begin{corollary}\label{cor:gam}
Consider a Markovian delayed Hawkes birth-death process as in Corollary~\ref{steadystateequaldist}. 
Suppose that $b_2:=\mathbb E[B^2]<\infty$. 
Then we have, as $\rho=b_1/r\uparrow1$,
\begin{align*} 
(1-\rho)\tilde\Lambda(\infty)\stackrel{\mathcal D}\to\Gamma\left(\frac{2r\lambda_0}{b_2},\frac{2r}{b_2}\right),\quad(1-\rho)\tilde Q(\infty)&\stackrel{\mathcal D}\to\Gamma\left(\frac{2r\lambda_0}{b_2},\frac{2r\mu}{b_2}\right).
\end{align*}
\end{corollary}

\section{Discussion and concluding remarks} \label{discussion}
We have formally introduced the delayed Hawkes process, 
and a rich family of point processes having sojourn-time dependent excitation, containing Hawkes, delayed Hawkes and the ephemerally self-exciting process as special cases. 
The delayed Hawkes process arises naturally in applications and has turned out to be remarkably tractable, admitting a cluster process representation in the linear case enabling transform characterizations by a fixed-point equation and the analysis of heavy-tailed asymptotics.
The effect of delays has been made visible in a scaling limit that is markedly different from its classical, non-delayed counterpart. 
Furthermore, using a method that one can describe as \emph{genealogical coupling}, we have demonstrated that the delayed Hawkes birth-death process is stochastically dominated by a comparable Hawkes-fed birth-death process.  
In the Markovian case, we have provided a recursive procedure to calculate the moments of a network of delayed Hawkes birth-death processes explicitly. 

In future research, several directions can be envisioned. 
\begin{itemize}
\item In Theorem~\ref{FCLT}, we could only state our FCLTs on an interval bounded away from $0$. 
One may want to study the (complex) behavior on an interval $[0,\epsilon]$ including $0$ as well. 
\item As discussed in Remark~\ref{otheralpha}, in the scaling limit for $\alpha\in(\frac12,1)$, it would be interesting to identify $\mathbb E[N_\alpha^T(v)]$. 
In the same remark, we saw that for $\alpha\in(\frac12,1)$, we still find a Brownian limit, whereas for $\alpha=1$ and sojourn times taking values in the unit interval, the situation is more involved; in particular, one may ask whether it is reasonable to expect short-range dependence.
If there is non-Gaussian behavior for $\alpha=1$, one may want to look for a scaling limit in which one multiplies sojourn times by $T^{\alpha(T)}$, before one contracts time by a factor $T$; 
here, $\alpha(T)\to1$ as $T\to\infty$. 
This setting bears some similarities with the one considered in \cite{Rosenbaum1}, although in our case quantities unscaled by $1-\alpha(T)$ do not diverge, but instead become smaller and, in some sense, `collapse' to a Poisson process for $\alpha>1$. 
A similar regime that may be interesting is the one where $\alpha=1/2$, but where we have sojourn times $J_T=T\cdot J$, for some positive random variable $J$, so that (A3) is not satisfied in the limit $T\to\infty$.
\item An interesting line of study concerns statistical inference for delayed Hawkes processes. 
A considerable amount of literature exists on this topic for classical Hawkes processes, but it is open to what extent these results extend to delayed Hawkes processes. 
In the Markovian case our closed-form expressions for the moments can be used to identify moment estimators, whereas the non-Markovian case is anticipated to be substantially more challenging.
In this direction, the goodness-of-fit results reported in Appendix \hyperref[app B]{B} are promising.
\item It would be interesting to analyze the effect of delays on cluster durations for delayed Hawkes, following the recent results by Daw~\cite{Conditional uniformity}. 
We have not succeeded in extending Daw's arguments to our setting.
\item Our general family of models having sojourn-time dependent excitation encompasses the Hawkes, the delayed Hawkes, and the ephemerally self-exciting processes as special cases. 
It would be interesting to identify other relevant models belonging to this family.
\end{itemize}


{\small

}
\section*{Appendix A: Relegated proofs of Section~\ref{existence}}\label{app A}

\noindent\textit{Proof of Theorem~\ref{stabilityth uni}}.
We prove the theorem in the univariate case. 
From there, the multivariate result can be proved along the lines of \cite{Stabilitypaper}, Theorem~7, taking the randomness of the excitation functions into account in the same fashion as we do in the univariate case.  
To avoid repetition, we exclude the proof.

The proof uses the idea of the Picard proof method for the existence of differential equations, and follows \cite{Bacry}, Theorem~1. 
It is structured as follows. 
We can assume, w.l.o.g., that $L=1$, by writing $\phi(\cdot)=\phi(L^{-1}L\ \cdot)$. 
First, we prove the `existence' part. 
We take a bivariate Poisson process $M$ marked with random functions. 
With the aid of Lemma~\ref{Poisson construction}, we use Picard iteration, starting from the empty process, to construct a stationary process $N$ with finite mean intensity satisfying the desired dynamics.  
Second, we prove the `uniqueness' part, by proving that \textit{any} stationary process $\tilde N$ with finite mean intensity satisfying the desired dynamics also satisfies condition (ii) in the theorem. 
This means that we have stability, from which we deduce $\tilde N\stackrel{\mathcal D}=N$. 
Third, we prove stability under condition (i). 
Next, under condition (ii), we can take expectations with respect to $M$ in the proof of the stability part below, after which the proof is analogous to the one under condition (i); therefore it is omitted.

\textit{Existence}. We construct the process $N$ upon a basis being a product probability space $(\mathscr X,\mathcal A,\mathbb P)$ of (i) the canonical space of bivariate point processes on $\mathbb R\times\mathbb R_+$, with a probability measure $\mathbb P_M$ such that the identity mapping is a bivariate Poisson process of unit rate, and (ii) $(\Omega,\mathcal F)=L^1(\mathbb R_+)\cap L^\infty(\mathbb R_+)$, with a probability measure $\mathbb Q$ denoting the distribution of the random functions $h$. 
Such a random function exists by Kolmogorov's extension theorem. 
We denote the resulting marked Poisson process on $\mathbb R\times\mathbb R_+\times\Omega$ by $M$. 
Write $(\mathcal A_t)_{t\in\mathbb R}$ for the filtration induced by $M$, i.e., $\mathcal A_t=\sigma(S_tM_-)$. Write $\mathcal P(\mathcal A_t):=\bigvee_{s<t}\mathcal A_s$ for the corresponding predictable $\sigma$-algebra.  
As indicated in Section~\ref{existence}, we treat the first coordinate of $M$ as time.

We say that a point process $N$ is \textit{compatible} w.r.t.\ the left-shift operator $S_t$ if for all $t\in\mathbb R$, $\tilde\omega\in\mathcal A$, $S_tN(\tilde\omega)=N(S_t\tilde\omega)$, where $S_t\tilde\omega$ means that time is shifted in the basis space, meaning that the first coordinate of $M$ is shifted.

We approximate the desired process $(N(\cdot),\Lambda(\cdot))$ using Picard iteration. 
More specifically, we set $\Lambda_0\equiv 0$, and for $n\in\mathbb N_0$, \begin{align}
N_{n}(A\times B)&=\int_{A\times\mathbb R_+\times B}\mathbf1_{[0,\Lambda_n(\tau)]}(s) M(\mathrm d\tau\times\mathrm ds\times\mathrm d\omega),\quad &&A\times B\in\mathcal B(\mathbb R)\otimes\mathcal F,\nonumber\\
\Lambda_{n+1}(t)&=\phi\left(\int_{(-\infty,t)\times\Omega}h(t-\tau,\omega)N_n(\mathrm d\tau\times\mathrm d\omega)\right),\quad&&t\in\mathbb R.\label{Picard}
\end{align}
By induction, for every $n\in\mathbb N_0$, $N_n$ is adapted to $(\mathcal A_t)_{t\in\mathbb R}$, while $\Lambda_n$ is adapted to $(\mathcal P(\mathcal A_t))_{t\in\mathbb R}$.
Note that if $\phi(0)=0$, the zero solution is stationary; we typically work with functions such that $\phi(0)>0$. 
By construction, the processes $(N_n), (\Lambda_n)$ are $S_t$-compatible and increasing in $n$. 
Since the basis space on which the process is constructed is time-invariant, it follows that $(N_n), (\Lambda_n)$ are stationary.
Since $\phi$ is Lipschitz, for $n\geq1$ it holds that \begin{equation*}\mathbb E|\Lambda_{n+1}(0)-\Lambda_n(0)|\leq\mathbb E\int_{(-\infty,0)\times\Omega}|h(-\tau,\omega)|\ (N_n-N_{n-1})(\mathrm d\tau\times\mathrm d\omega),\end{equation*}
where the first coordinate of $N_n-N_{n-1}$ counts the number of points between $t\mapsto \Lambda_n(t)$ and $t\mapsto \Lambda_{n-1}(t)$. 
By Lemma~\ref{Poisson construction}, this process has $\Lambda_n-\Lambda_{n-1}$ as an $\mathcal H_t^M$-intensity. 
Hence,
\begin{align*}\mathbb E[\Lambda_{n+1}(0)-\Lambda_n(0)]&\leq\mathbb E\int_{(-\infty,0)\times\Omega}|h(-\tau,\omega)|\ (N_n-N_{n-1})(\mathrm d\tau\times\mathrm d\omega)\\
&=\int_{(-\infty,0)\times\Omega}|h(-\tau,\omega)|\ \mathbb E (N_n-N_{n-1})(\mathrm d\tau\times\mathrm d\omega)\\
&=\int_{(-\infty,0)\times\Omega}|h(-\tau,\omega)|\ (\mathrm d\tau\times\mathrm d\omega) \mathbb E [\Lambda_n(0)-\Lambda_{n-1}(0)]\\
&=\|\mathbb E|h|\|_{L^1}\mathbb E[\Lambda_{n}(0)-\Lambda_{n-1}(0)], 
\end{align*}
where we use Fubini's theorem, and where the second equality follows by stationarity of $(\Lambda_n)$. 
It follows that \[\sum_{n\geq0}\mathbb E[\Lambda_{n+1}(0)-\Lambda_n(0)]\leq\frac{\phi(0)}{1-\|\mathbb E|h|\|_{L^1}}<\infty,\] hence $(\Lambda_n)$ converges in $L^1$ to some limit process $\Lambda$. 
Using the same bounds, Markov's inequality gives $$\mathbb P\left(\Lambda_{n+1}(0)-\Lambda_n(0)\geq \|\mathbb E|h|\|_{L^1}^{n/2}\right)\leq\phi(0)\|\mathbb E|h|\|_{L^1}^{n/2},$$ and since $\sum_{n\geq0}\|\mathbb E|h|\|_{L^1}^{n/2}<\infty$, an application of Borel-Cantelli gives that $(\Lambda_n)$ converges a.s.\ as well, to the same limit $\Lambda$. 

Next, since $N_n-N_{n-1}$ is a point process itself, for any bounded $A\in\mathcal B(\mathbb R)$ of Lebesgue measure $\mathrm{Leb}(A)<\infty$, and $B\in\mathcal F$,
\begin{align*}
\sum_{n\geq0}\mathbb P\left(\int_{A\times B}\ (N_{n+1}-N_n)(\mathrm d\tau\times\mathrm d\omega)\neq0\right)&\leq\sum_{n\geq0}\mathbb E\int_{A\times B}\ (N_{n+1}-N_n)(\mathrm d\tau\times\mathrm d\omega)\\
&=\mathrm{Leb}(A)\mathbb Q(B)\sum_{n\geq0}\mathbb E[\Lambda_{n+1}(0)-\Lambda_n(0)],
\end{align*}
which is finite, using that $\mathbb Q$ is a probability measure. 
Hence, by Borel-Cantelli, $N_n$ is a.s.\ eventually constant on any bounded $A\times B\in\mathcal B(\mathbb R)\times \mathcal F$, whence it converges to some process $N$. 
The left-shift operator is continuous, whence $$S_tN(\tilde\omega)=S_t\lim_{n\to\infty}N_n(\tilde\omega)=\lim_{n\to\infty}S_tN_n(\tilde\omega)=\lim_{n\to\infty}N_n(S_t\tilde\omega)=N(S_t\tilde\omega),$$
i.e., $N$ inherits the $S_t$-compatibleness of $(N_n)_{n\geq0}$.

To finish the proof of the existence part, we verify that the limit processes $N$, $\Lambda$ satisfy the stated dynamics. 
First, by Fatou's lemma, for all $A\in\mathcal B(\mathbb R)$, $B\in\mathcal F$ of bounded measure, it holds that
\begin{align*}
\phantom{\leq\ }\mathbb E&\int_{A\times B}\left|N(\mathrm d\tau\times\mathrm d\omega)-M(\mathrm d\tau\times [0,\Lambda(\tau)]\times\mathrm d\omega)\right|\\
&\leq\liminf_{n\to\infty}\mathbb E\int_{A\times B}\left|M(\mathrm d\tau\times [0,\Lambda_n(\tau)]\times\mathrm d\omega)-M(\mathrm d\tau\times [0,\Lambda(\tau)]\times\mathrm d\omega)\right|\\
&=\mathrm{Leb}(A)\mathbb Q(B)\liminf_{n\to\infty}\mathbb E|\Lambda_n(0)-\Lambda(0)|=0,
\end{align*} 
where we use stationarity of the intensity processes as we did before. 
Note that the limits in the previous display actually exists, so that we can replace the limit inferiors by limits. 
Hence, $N$ is a modification of a process with conditional intensity $\Lambda(\cdot)$. 
For the process $\Lambda(\cdot)$, note that 
\begin{align*}
\phantom=\ &\mathbb E\left|\Lambda(0)-\phi\left(\int_{(-\infty,0)\times\Omega}h(-\tau,\omega)\ N(\mathrm d\tau\times\mathrm d\omega)\right)\right|\\
&\leq\mathbb E|\Lambda(0)-\Lambda_{n+1}(0)|+\mathbb E\int_{(-\infty,0)\times\Omega}h(-\tau,\omega)\ (N-N_{n})(\mathrm d\tau\times\mathrm d\omega)\\
&=\mathbb E|\Lambda(0)-\Lambda_{n+1}(0)|+\|\mathbb E|h|\|_{L^1}\mathbb E|\Lambda(0)-\Lambda_{n}(0)|,
\end{align*}
where we apply Lemma~\ref{Poisson construction}, the Lipschitz condition, the triangle inequality, and Fubini's theorem.
Hence, by letting $n\to\infty$ and by using stationarity, we see that $\Lambda(\cdot)$ is a modification of the process satisfying dynamics \eqref{intensity nonlinear uni}.

\textit{Uniqueness}. To prove uniqueness of the stationary solution $\tilde N$ with finite mean intensity $\tilde\Lambda$, we show that such a process satisfies initial condition (ii) given in the theorem. 
From the stability part, it then follows that $S_t\tilde N\stackrel{\mathcal D}\to N$. 
By stationarity, $S_t\tilde N\stackrel{\mathcal D}=\tilde N$, so $\tilde N \stackrel{\mathcal D}= N$.

Indeed, by a change of variables, 
\begin{align*}
\mathbb E_M i_c(t)&=\tilde \Lambda\int_{t-c}^t\int_{(-\infty,0)\times\Omega}|h(s-\tau,\omega)|\ (\mathrm d\tau\times\mathrm d\omega)\ \mathrm ds\leq c\tilde \Lambda\int_{t-c}^\infty \mathbb E|h(\tau,\omega)|\ \mathrm d\tau;
\end{align*}
note that this upper bound tends to $0$ as $t\to\infty$ by dominated convergence and Fubini, and that we have $\mathbb E_M i_c(t)\leq c\tilde \Lambda\|\mathbb E|h|\|_{L^1}$ for all $t\in\mathbb R$. 
This verifies initial condition (ii).

\textit{Stability}. Let $\tilde N$ be a bivariate point process marked by random functions with dynamics \eqref{intensity nonlinear uni} on $\mathbb R_+$, satisfying initial condition (i). 
In particular, we do \textit{not} assume that it also satisfies dynamics \eqref{intensity nonlinear uni} on $\mathbb R_-$. 
We prove that the finite-dimensional distributions of $S_t\tilde N$ converge to those of $S_tN$. 
Then \cite{DaleyVereJones}, Theorem 11.1.VII gives stability:  $S_t\tilde N_+\stackrel{\mathcal D}\to N_+$, as $t\to\infty$. 

We prove convergence of finite-dimensional distributions by proving that for every $c\in(0,t)$, 
\begin{align*}
 \mathbb P\left(N\{\tau\}\neq \tilde N\{\tau\}\text{ for some }\tau\in(t-c,t)\big|\mathcal H_0^{\tilde N}\right)\to0   
\end{align*} as $t\to\infty$. 
Here, we assume that $N$ and $\tilde N$ are constructed using the same marked bivariate Poisson process $M$ of unit rate. 
This is justified as follows.  
It can be proved that the $\mathcal H_t^{\tilde N}$-intensity 
\begin{align}
  \tilde\Lambda(t)=\phi\left(\int_{(-\infty,t)\times\Omega}h(t-\tau,\omega)\ \tilde N(\mathrm d\tau\times\mathrm d\omega)\right) 
\end{align} of $\tilde N$ is such that $t\mapsto \mathbb E[\tilde \Lambda(t)|\mathcal H_0^{\tilde N}]$ is a.s.\ locally integrable; this is proved in the same way as in \cite{Stabilitypaper}, Theorem~1. 
For this we need the assumption $\|\mathbb E|h|\|_{L^\infty}<\infty$. 
It follows that $\tilde N$ is nonexplosive, a.s. 
Then \cite{Massoulie}, Lemma~2, implies existence of some marked bivariate Poisson process $M$ of unit rate from which $\tilde N$ can be constructed using Lemma~\ref{Poisson construction}.  

In order to prove convergence of finite-dimensional distributions, we consider 
$$f(t)=\mathbb E\left[|\Lambda(t)-\tilde\Lambda(t)|\,\Big|\,\mathcal H_0^{\tilde N}\right]\mathbf1\{t\geq0\},$$ 
which is a.s.\ locally integrable because  $t\mapsto \mathbb E[\tilde \Lambda(t)|\mathcal H_0^{\tilde N}]$ is. 
Here, $\mathcal H_t^{\tilde N}$ is the sigma-algebra generated by the history of $\tilde N$ up to time $t$. 
Also consider the integrated version of $f$: 
\begin{align*}F(t)&:=\int_{t-c}^tf(\tau)\ \mathrm d\tau=\mathbb E\left[\int_{t-c}^t\ \mathrm d|N-\tilde N|(\tau)\,\Big|\,\mathcal H_0^{\tilde N}\right]\\&\phantom:\geq\mathbb P\left(N\{\tau\}\neq \tilde N\{\tau\}\text{ for some }\tau\in(t-c,t)\,\big|\,\mathcal H_0^{\tilde N}\right),
\end{align*} 
where again $c\in(0,t)$. 
By the last inequality, it suffices to prove that $F(t)\to0$ as $t\to\infty$.

With $\Lambda$ denoting the average intensity of $\Lambda(t)$, it holds for $t\geq0$ that
\begin{align*}
f(t)&\leq\int_{(-\infty,0)\times\Omega}|h(t-\tau,\omega)|\ \tilde N(\mathrm d\tau\times\mathrm d\omega)+\Lambda\int_{(-\infty,0)\times\Omega}|h(t-\tau,\omega)|\ (\mathrm d\tau\times\mathrm d\omega)\\
&+\int_{(0,t)\times\Omega}|h(t-\tau,\omega)|f(\tau)\ (\mathrm d\tau\times\mathrm d\omega).
\end{align*}
Integrating from $t-c$ to $t$ gives, after some more bounding,  
$$F(t)\leq j_c(t)+\int_0^t\mathbb E|h(\tau)|F(t-\tau)\ \mathrm d\tau,$$ where $j_c(t)=i_c(t)+c\Lambda\int_{t-c}^\infty \mathbb E|h(\tau)|\ \mathrm d\tau$. 
This is a Volterra integral inequality of the second kind. 
Since $\|\mathbb E|h|\|_{L^1}<1$, Picard iteration gives $$F(t)\leq\int_{0}^tj_c(t-\tau)\left(\sum_{n\geq0}(\mathbb E|h|)^{n*}(\tau)\right)\ \mathrm d\tau.$$
Note that $\sum_{n\geq0}(\mathbb E|h|)^{n*}(\tau)$ can be bounded in $L^1$ by Young's convolution inequality. 
Also, by our assumption (i), it follows that $j_c(t)$ is bounded a.s.\ and converges to $0$ as $t\to\infty$. 
By dominated convergence, $F(t)\to0$ as $t\to\infty$, finishing the proof of the stability part.
$\hfill\Box$

\section*{Appendix B: Relegated details of Section~\ref{scalinglimit}}\label{app B}

We claim at the start of Section~\ref{scalinglimit} that it is possible to distinguish between a Hawkes and a delayed Hawkes process using statistical techniques. 
In particular, suppose that one generates realizations on $[0,T]\ni t$, with $T=50{,}000$, of a univariate, linear, exponential delayed Hawkes pure-birth process $N(t)$ having conditional intensity 
\begin{equation}
    \Lambda(t) = \lambda_0 + \sum_{t_i < t} \alpha e^{-r(t - t_i)},
\end{equation}
where $t_i - J_i$ are the event times of $N(t)$, with $J_i \stackrel{\mathrm{i.i.d.}}{\sim} \mathrm{Exp}(\mu)$. 
In other words, $(t_i)$ correspond to the death times of the birth-death process $Q$ associated with $N$. 
We choose parameters $(\lambda_0, \alpha, r, \mu) = (1/6, 3, 3.6, 1/6)$, where the first three parameters imply that the expected stationary arrival intensity of $N$ equals $1$.

We fit $N$ to a parametric null hypothesis consisting of univariate, linear, exponential Hawkes processes. 
In particular, for the parameter space $\Theta = (0,10)^3$, we consider the parametric null hypothesis
\begin{equation}\label{H0 exp}
H_0^{\mathrm{Exp}}: N \stackrel{d}{=} N_\theta^{\mathrm{Exp}} \text{ for some } \theta \in \{(\lambda_0, \alpha, r) \in \Theta : \alpha < r \},
\end{equation}
where $N_\theta^{\mathrm{Exp}} = N_{\lambda_0, \alpha, r}^{\mathrm{Exp}}$ is a univariate, linear, exponential Hawkes process having intensity
\begin{equation}\label{int exp}
\lambda^{\mathrm{Exp}}_{\theta}(t) = \lambda^{\mathrm{Exp}}_{\lambda_0, \alpha, r}(t) = \lambda_0 + \sum_{t_i < t} \alpha e^{-r (t - t_i)},
\end{equation}
where $t_i$ denote the event times of $N_\theta^{\mathrm{Exp}}$.

We apply the asymptotically correct goodness-of-fit test described in \cite{GoFpaper}, Algorithm~1, using $n = \mathrm{ceil}(\sqrt{T}/4)$ and an Andersen-Darling test in step (v) of their algorithm; these choices are motivated in \cite{GoFpaper}. 
Out of $1{,}000$ simulated sample paths, we reject $494$, $768$ and $949$ times using significance levels of $0.01$, $0.05$, and $0.20$, respectively. 
Hence, we can clearly detect the deviation of the delayed Hawkes process from the non-delayed null hypothesis empirically.



\section*{Appendix C: Relegated proofs of Section~\ref{chapternonmarkov}}\label{app A5}
\noindent\textit{Proof of Theorem~\ref{heavytailsunith}}.
In this proof, we first assume that $B\in\mathrm{APT}(-\alpha)$, so that $B$ is of class $\mathscr R(-\alpha)$ with $\ell(x)$ a function converging to a positive constant. 
Under this assumption, we prove that also $Q(t)\in\mathscr R(-\alpha)$. 
Then we argue that essentially the same proof holds to show that $B\in\mathscr R(-\alpha)$ implies $Q(t)\in\mathscr R(-\alpha)$, and we indicate what needs to be changed in the proof.

By specifying \eqref{jointtansform to clustertransform} to the univariate case, and setting $s=0$, we express the Z-transform of $Q(t)$ as 
\begin{equation}
\mathbb E\left[z^{Q(t)}\right]=\exp\left(-\lambda_0\int_0^t(1-\eta(u,z))\ \mathrm du\right),\label{Z^nt in eta}
\end{equation} 
where $\eta(u,z):=\mathbb E[z^{S^Q(u)}]$, the Z-transform of the birth-death cluster process $S^Q$. 
It satisfies 
\begin{equation}
\eta(u,z)=\mathscr J(u)z+\int_0^u\beta\left(\int_w^uh(s-w)(1-\eta(u-s,z))\ \mathrm ds\right)\ \mathrm d\bar{\mathscr J}(w),\label{fixedpointeta}
\end{equation} 
which follows by specifying \eqref{defphij} to the univariate delayed Hawkes setting, and where $\mathscr J$, $\bar{\mathscr J}$ denote the survival function and CDF, respectively, of the generic sojourn time random variable $J$.
In the remainder of the proof, we invoke a Tauberian theorem to relate the behavior of a regularly varying function at infinity to the behavior of its Laplace-Stieltjes transform at $0$. 
This relation for $\beta$ is substituted into \eqref{fixedpointeta}, after which we analyze expansions for $\eta(u,z)$ and $\mathbb E[z^{Q(t)}]$. 
By invoking the Tauberian theorem in the reverse direction, we conclude that $Q(t)$ is also of class $\mathscr R(-\alpha)$.

As indicated, we first assume that $\mathbb P(B>x)x^{\alpha}\to C$ for some $C>0$.
Then it follows from the Tauberian theorem \cite{Bingham}, Theorem~8.1.6, that $\beta(s)-1+sb_1\sim-C\Gamma(1-\alpha)s^\alpha$ as $s\downarrow0$. 
Hence, as $z\uparrow 1$, 
\begin{align}\nonumber&\beta\left(\int_w^uh(s-w)(1-\eta(u-s,z))\ \mathrm ds\right)-1+b_1\int_w^uh(s-w)(1-\eta(u-s,z))\ \mathrm ds\\&\sim-C\Gamma(1-\alpha)\left(\int_w^uh(s-w)(1-\eta(u-s,z))\ \mathrm ds\right)^\alpha.\label{betaexpansion}
\end{align}
Substituting this into \eqref{fixedpointeta} yields, as $z\uparrow1$, 
\begin{align}1-\eta(u,z)&\sim1-\mathscr J(u)z-\int_0^u\bigg{\{}1-b_1\int_w^uh(s-w)(1-\eta(u-s,z))\ \mathrm ds\nonumber\\&-C\Gamma(1-\alpha)\left(\int_w^uh(s-w)(1-\eta(u-s,z))\ \mathrm ds\right)^\alpha\bigg{\}}\ \mathrm d\bar{\mathscr J}(w)\nonumber\\&=\mathscr J(u)(1-z)+\int_0^u\bigg\{b_1\int_w^uh(s-w)(1-\eta(u-s,z))\ \mathrm ds\nonumber\\&+C\Gamma(1-\alpha)\left(\int_w^uh(s-w)(1-\eta(u-s,z))\ \mathrm ds\right)^\alpha\bigg\}\ \mathrm d\bar{\mathscr J}(w).\label{1-eta asymp}
\end{align}

Next, expand $1-\eta(u,z)=\mathbb E[S(u)](1-z)+o(1-z)$, as $z\uparrow1$. 
Write $\mathbb E[S(u)]=R_1(u)$ for the leading term. 
Substituting this into \eqref{1-eta asymp} and comparing terms of order $1-z$, we see that $R_1$ satisfies 
\begin{align}R_1(u)&=\mathscr J(u)+b_1\int_0^u\int_w^uh(s-w)R_1(u-s)\ \mathrm ds\ \mathrm d\bar{\mathscr J}(w)\nonumber\\&=\mathscr J(u)+b_1\int_0^uR_1(u-s)\int_0^sh(s-w)\ \mathrm d\bar{\mathscr J}(w)\ \mathrm ds\nonumber\\&=\mathscr J(u)+b_1(R_1*\bar h)(u),\label{R1u}
\end{align} 
where $\bar h$ is defined by $\bar h(s):=\int_0^s h(s-w)\ \mathrm d\bar{\mathscr J}(w)$, and where $*$ denotes the convolution operator.
This is a Volterra equation of the second kind, and by Picard iteration we obtain, for $u\geq0$, 
\begin{equation}
R_1(u)=\sum_{n\geq0}b_1^n(\bar h^{n*}*\mathscr J)(u).\label{eqR1}
\end{equation} 
The next term in the expansion of $1-\eta(u,z)$ is of the form $R_\alpha(u)(1-z)^{\alpha}$. 
When we substitute $1-\eta(u,z)=\mathbb E[S(u)](1-z)+R_\alpha(u)(1-z)^{\alpha}+o((1-z)^{\alpha})$ into \eqref{1-eta asymp} and compare terms of order $(1-z)^{\alpha}$, we obtain 
\begin{align}
R_\alpha(u)=b_1(R_\alpha*\bar h)(u)+C\Gamma(1-\alpha)\int_0^u\left(\int_w^uh(s-w)R_1(u-s)\ \mathrm ds\right)^\alpha\ \mathrm d\bar{\mathscr J}(w).
\end{align} 
This is again a Volterra equation of the second kind; by Picard iteration we obtain \begin{equation}
R_\alpha(u)=C\Gamma(1-\alpha)\sum_{n\geq0}b_1^n\left(\bar h^{n*}*\left(\int_0^\cdot\left(\int_w^\cdot h(s-w)R_1(\cdot-s)\ \mathrm ds\right)^\alpha\ \mathrm d\bar{\mathscr J}(w)\right)\right)(u).\label{PicardR_alpha}
\end{equation} 
From \eqref{h switching order int} with $\alpha=0$, we infer that $\|\bar h\|_{L^1}=\|h\|_{L^1}$. 
Hence, by applying Young's convolution inequality $n$ times with $r=p=\infty,q=1$ to each term of \eqref{eqR1}, and by recognizing a geometric series, $\|\bar h\|_{L^1}b_1=\|h\|_{L^1}b_1<1$ implies that $R_1$ is a bounded function of $u$. 
Since $\|h\|_{L^1}<\infty$, the inner integral in \eqref{PicardR_alpha} is finite, whence $R_\alpha$ is also a bounded function of $u$. 

We now substitute the expansion $1-\eta(u,z)=\mathbb E[S(u)](1-z)+R_\alpha(u)(1-z)^{\alpha}+o((1-z)^{\alpha})$ into \eqref{Z^nt in eta}, which gives, after expanding the exponential functions, 
\begin{align}
\mathbb E\left[z^{Q(t)}\right]&\sim\exp\left(-\lambda_0\int_0^t\left(R_1(u)(1-z)+R_\alpha(u)(1-z)^{\alpha}\right)\ \mathrm du\right)\nonumber\\&=1-\lambda_0(1-z)\int_0^tR_1(u)\ \mathrm du-\lambda_0(1-z)^{\alpha}\int_0^tR_\alpha(u)\ \mathrm du+o((1-z)^{\alpha})\label{expansionznt}.
\end{align} 
By using the Tauberian theorem \cite{Bingham}, Theorem 8.1.6, the other way around, it then follows that $Q(t)\in\mathscr R(-\alpha)$, as claimed. 

We now indicate what we have to change in the proof if we assume that $B\in\mathscr R(-\alpha)$, so that $\mathbb P(B>x)=\ell(x)x^{-\alpha}$ for some slowly varying function $\ell$. 
Note that the constant $-C\Gamma(1-\alpha)$ in (\ref{1-eta asymp}) should in that case be replaced by $\ell(1/I(u,z;w))$, where $I(u,z;w)=\int_w^uh(s-w)(1-\eta(u-s,z))\ \mathrm ds$.

For small $\delta\in(0,\alpha-1)$, we use Potter's Theorem (i.e., \cite{Bingham}, Theorem~1.5.6) to conclude that for $z$ sufficiently close to $1$ and for some $A>1$, \begin{equation}
\ell\left(\frac1{I(u,z;w)}\right)\bigg/\ell\left(\frac1{1-z}\right)\leq A\max\left\{\left(\frac{1-z}{I(u,z;w)}\right)^\delta,\left(\frac{1-z}{I(u,z;w)}\right)^{-\delta}\right\}.\label{potter}
\end{equation}
Our assumptions on $h$ imply that given $\epsilon>0$, there exists $K>0$ such that \begin{align}
\sup_{v\in[0,u]}\mathbb P(S^Q(v)>K)<\epsilon,    
\end{align} 
whence for $0<z<1$ and $0\leq v\leq u$ we have $\eta(v,z)=\mathbb E[z^{S^Q(v)}]\geq(1-\epsilon)z^K$. 
Hence, we have, as $z\uparrow1$, 
$$\frac1{I(u,z;w)}\geq\frac1{(1-(1-\epsilon)z^K)\int_0^{u}h(s)\ \mathrm ds}\to\frac1{\epsilon\int_0^uh(s)\ \mathrm ds}.$$ 
Given some threshold $D>0$ such that (\ref{potter}) holds for $\ell(x)/\ell(y)$ for all $x,y\geq D$, cf.\ \cite{Bingham}, Theorem 1.5.6, we choose $\epsilon>0$ sufficiently small to assure that ${\epsilon\int_0^uh(s)\ \mathrm ds}\leq 1/(2D)$, so that we can find some $z^*\in(0,1)$ such that $z\geq z^*$ implies that  $1/I(u,z;w)\geq D$ for all $s,w$. 
We also have $(z-1)^{-1}\geq D$ for $z\geq 1-1/D$. 

When we have $B\in\mathscr R(-\alpha)$ instead of $B\in\mathrm{APT}(-\alpha)$, we replace in \eqref{betaexpansion} the factor $-C\Gamma(1-\alpha)$ by $\ell(1/I(u,z;w))$. 
For $z>\max\{z^*,1-\frac1D\}$, we apply the bound \eqref{potter} and a similar Potter bound for 
\begin{align}\left.
 \ell\left(\frac1{1-z}\right)\right/\ell\left(\frac1{I(u,z;w)}\right);   
\end{align} 
then we have an upper and a lower bound for the asymptotic expansion of $1-\eta(u,z)$, to which we conduct an analysis analogous to the case $B\in\mathrm{APT}(-\alpha)$, yielding $Q(t)\in\mathscr R(-\alpha)$ both when we use the upper bound as if it were the true expansion, and when we use the lower bound. We conclude that $Q(t)\in\mathscr R(-\alpha)$.
$\hfill\Box$

\medskip 

\noindent\textit{Proof of Corollary~\ref{heavytailstraffic}}.
Letting $t\to\infty$ in \eqref{expansionznt}, we have 
\begin{equation}
1-\mathbb E\left[z^Q\right]\sim\lambda_0(1-z)\int_0^\infty R_1(u)\ \mathrm du+\lambda_0(1-z)^\alpha\int_0^\infty R_\alpha(u)\ \mathrm du.
\end{equation}
By applying Young's convolution inequality to each term of \eqref{eqR1} and by recognizing a geometric series, we observe that $\int_0^\infty R_1(u)\ \mathrm du$ is of order $({1-\rho})^{-1}$, as $\rho\uparrow1$. 
Similarly, we use \eqref{PicardR_alpha} to conclude that $\int_0^\infty R_\alpha(u)\ \mathrm du$ is of order $({1-\rho})^{-\alpha-1}$.

Note that $\mathbb E[Q]=\lambda_0\int_0^\infty R_1(u)\ \mathrm du$, so $\mathbb E[Q]=\mathcal O\left((1-\rho)^{-1}\right)$, as $\rho\uparrow1$, i.e., $(1-\rho)Q$ stays bounded as $\rho\uparrow 1$. 
More specifically, using $1-z^{1-\rho}=(1-\rho)(1-z)+\mathcal O\left((1-z)^2\right)$, as $z\uparrow1$, we have, up to $\mathcal O\left((1-z)^2\right)$ terms,
\begin{align}
\nonumber1-\mathbb E\left[z^{(1-\rho)Q}\right]&\sim\lambda_0\left(1-z^{(1-\rho)}\right)\int_0^\infty R_1(u)\ \mathrm du+\lambda_0\left(1-z^{(1-\rho)}\right)^\alpha\int_0^\infty R_\alpha(u)\ \mathrm du\\&=(1-\rho)(1-z)\lambda_0\int_0^\infty R_1(u)\ \mathrm du+(1-\rho)^\alpha(1-z)^\alpha\lambda_0\int_0^\infty R_\alpha(u)\ \mathrm du.\label{1-rho N exp}
\end{align}
From this expansion, it is clear that $(1-\rho)\lambda_0\int_0^\infty R_1(u)\ \mathrm du<\infty$ as $\rho\uparrow1$. 
The second term in \eqref{1-rho N exp} diverges, as $\rho\uparrow1$, which implies that $X:=\lim_{\rho\uparrow 1}(1-\rho)Q$ satisfies $\mathbb E\left[X^\alpha\right]=\infty$. 
$\hfill\Box$

\section*{Appendix D: Supplement to Section~\ref{chapternonmarkov}:\\ Cluster size distributions for gamma-distributed marks}\label{app A3}
In this appendix, we study the distribution of the cluster size of the delayed Hawkes process, that is, the total number of descendants of a single immigrant, including the immigrant itself. 
Note that the offspring size is given by a Poisson random variable with parameter equal $B\int_J^\infty h(t-J)\ \mathrm dt=B\varrho$, where $J$ is the sojourn time of the parent, and where we set $\varrho:=\|h\|_{L^1}$. 
In particular, the offspring distribution is the same as the one for a Hawkes process having the same parameters. 
This implies that the total size of a cluster is the same for both processes, and is given by the total progeny size of a Galton-Watson branching process, which can be determined with the aid of the hitting time theorem, see, e.g., \cite{hittingtimetheorem}.

\begin{lemma}[Hitting time theorem]\label{HTth}
The total progeny size $Z$ of a Galton-Watson branching process with offspring distribution $X$ has a distribution with probability mass function 
\begin{equation}
\mathbb P(Z=n)=\frac1n\mathbb P\left(\sum_{k=1}^nX_k=n-1\right),
\end{equation} 
where $(X_k)_{k\in\mathbb N}$ is an i.i.d.\ sequence of random variables having the same distribution as $X$.
\end{lemma}

For unmarked Hawkes processes, it is a well-known result that $Z\sim\mathrm{Borel}(\varrho)$, i.e., 
$$\mathbb P(Z=n)=\frac{e^{-\varrho n}(\varrho n)^{n-1}}{n!}.$$ 
Even without the probabilistic context, it can be proved that those Borel probabilities sum to unity by setting $x=-\varrho e^{-\varrho}\in(-e^{-1},0)$ for $\varrho\in(0,1)$, and by considering the Taylor expansion around $0$ of the principal branch of the Lambert W function. 

We now consider a marked (delayed) Hawkes process under the stability condition $\mathbb E[B]\varrho<1$. 
In this case, the offspring size follows a mixed-Poisson type distribution. 
To make use of Lemma~\ref{HTth}, we want this distribution to be such that i.i.d.\ sums belong to a well-known parametric family. 
This is the case for gamma-distributed marks. 
In fact, the assumption of gamma-distributed marks is not too restrictive, for the set of mixtures of gamma distributions is dense in the set of continuous probability distributions on $[0,\infty)$. 

\begin{proposition}\label{progth}
Let $\alpha,c>0$ be such that $\alpha\varrho/c<1$. 
Consider a (delayed) Hawkes process with $\Gamma(\alpha,c)$ distributed marks, i.e., the marks admit a density 
$$f_B(x)=\frac{c^\alpha x^{\alpha-1}e^{-cx}}{\Gamma(\alpha)}.$$ 
Then the total cluster size $Z$ is finite a.s.\ and has probability mass function \begin{equation}
\mathbb P(Z=n)=\frac1n\binom{(\alpha+1)n-2}{n-1}\left(\frac{c}{c+\varrho}\right)^{\alpha n}\left(\frac{\varrho}{c+\varrho}\right)^{n-1},\quad n\in\mathbb N,
\end{equation} where, for $x,y\in\mathbb R$, with $x>y-1$, we use the generalized binomial coefficient 
\begin{align}
    \binom xy=\frac{\Gamma(x+1)}{\Gamma(y+1)\Gamma(x-y+1)}.
\end{align}
\end{proposition}
\begin{proof}
Let $X$ denote the offspring random variable. 
Since, in self-evident notation, $X\,|\,B\sim\mathrm{Pois}(B\varrho)$, 
\begin{align*}
\mathbb P(X=n)
&=\mathbb E[\mathbb P(X=n|B)]=\int_0^\infty\frac{e^{-\varrho x}(\varrho x)^n}{n!}\frac{c^\alpha x^{\alpha-1}e^{-cx}}{\Gamma(\alpha)}\ \mathrm dx
=\frac{c^\alpha\varrho^n}{\Gamma(\alpha)n!}\int_0^\infty e^{-(c+\varrho)x}x^{\alpha+n-1}\ \mathrm dx\\
&=\frac{\Gamma(\alpha+n)}{\Gamma(\alpha)n!}\frac{c^\alpha\varrho^n}{(c+\varrho)^{\alpha+n}}
=\binom{\alpha+n-1}{n}{p}^\alpha(1-{p})^n
,
\end{align*} 
where ${p}:=c/(c+\varrho)$.
Hence, $X\sim\mathrm{NB}\left(\alpha,{p}\right)$, i.e., $X$ follows the \textit{generalized} negative binomial distribution; note that $\alpha>0$ is not necessarily integer. 
It follows that if $X_1,\ldots, X_n$ are i.i.d.\ copies of $X$, then $\sum_{k=1}^nX_k\sim\mathrm{NB}\left(\alpha n,{p}\right)$. 
The result now follows by an application of Lemma~\ref{HTth}.
\end{proof}

When $\alpha=1$, the gamma distribution reduces to an exponential distribution, and we obtain 
\begin{equation}\label{Catalan}
\mathbb P(Z=n)=C_{n-1}p^n(1-p)^{n-1},
\end{equation} 
where $C_n=\binom{2n}n/(n+1)$ is the $n$th Catalan number and $p$ as defined in the proof of Proposition~\ref{progth}. 
Note that the ephemerally self-exciting process with intensity jump $\varrho$ and expiration rate $c$ has the same progeny distribution, see \cite{Ephemeral}, Proposition~3.3. 
This is no coincidence. 
Letting $B$ be the expiration time of the ephemeral excitation, $X|B\sim\mathrm{Pois}(B\varrho)$, where $B\sim\mathrm{Exp}(c)$, showing that the offspring random variable $X$ has the same probabilistic behavior under the ephemerally self-exciting process and the (delayed) Hawkes process with exponentially distributed marks.


\section*{Appendix E: Relegated proofs of Section~\ref{markov}}\label{app A4}

\noindent\textit{Proof of Theorem~\ref{theorem7characterization}}.
For $\boldsymbol  k\in\mathbb N_0^d$, $\boldsymbol\lambda\in\mathbb R_+^d$, let 
\begin{align*}
F(t,\boldsymbol  k,\boldsymbol\lambda)&=\mathbb P(\boldsymbol  Q(t)=\boldsymbol  k,\boldsymbol\Lambda(t)\leq\boldsymbol\lambda),\quad f(t,\boldsymbol  k,\boldsymbol\lambda)=\frac{\partial^dF(t,\boldsymbol  k,\boldsymbol\lambda)}{\partial\lambda_1\cdots\lambda_d},\\\xi(t,\boldsymbol  k,\boldsymbol  s)&=\int_{\mathbb R_+^d}e^{-\boldsymbol  s^\top\boldsymbol\lambda}f(t,\boldsymbol  k,\boldsymbol\lambda)\ \mathrm d\boldsymbol\lambda.\\\intertext{Note that, with this notation,}\zeta(t,\boldsymbol  z,\boldsymbol  s)&=\sum_{\boldsymbol k\in\mathbb N_0^d}\boldsymbol z^{\boldsymbol k}\xi(t,\boldsymbol  k,\boldsymbol  s).
\end{align*} 
Let $\circ$ be the Hadamard product, 
and let $\boldsymbol  e_j$ be the $j$-th standard unit vector in $\mathbb R^d$. 
We consider the Markovian dynamics between times $t$ and $t+\Delta t$. 
Let  $\boldsymbol  k\in\mathbb N_0^d$ and $\boldsymbol\lambda\in\mathbb R_+^d$. 
Write $[\boldsymbol 0,\boldsymbol\lambda]:=[0,\lambda_1]\times\cdots\times[0,\lambda_d]$. 
Note that we may enter state $\boldsymbol k$ either due to an arrival in coordinate $j$, leaving state $\boldsymbol  k-\boldsymbol  e_j$; due to a departure in coordinate $j$, leaving state $\boldsymbol  k+\boldsymbol  e_j$; or due to rerouting from coordinate $j$ to $i$, leaving state $\boldsymbol  k+\boldsymbol  e_j-\boldsymbol e_i$. 
Therefore, as $\Delta t\downarrow0$, 
\begin{align*}F(t+\Delta t,\boldsymbol  k,\boldsymbol\lambda-\boldsymbol  r\circ(\boldsymbol\lambda-\boldsymbol\lambda_0)\Delta t)
&=\sum_{j=1}^d\int_{[\boldsymbol 0,\boldsymbol\lambda]}y_j\Delta tf(t,\boldsymbol  k-\boldsymbol  e_j,\boldsymbol  y)\ \mathrm d\boldsymbol  y\\
&+\sum_{j=1}^d(k_j+1)\mu_j\Delta t\int_{[\boldsymbol 0,\boldsymbol\lambda]}\mathbb P(\boldsymbol  B_j\leq\boldsymbol\lambda-\boldsymbol  y)f(t,\boldsymbol  k+\boldsymbol  e_j,\boldsymbol  y)\ \mathrm d\boldsymbol  y\\
&+\sum_{j=1}^d\sum_{i=1}^d(k_j+1)\mu_{ij}\Delta tF(t,\boldsymbol  k+\boldsymbol  e_j-\boldsymbol e_i,\boldsymbol  \lambda)\\
&+F(t,\boldsymbol  k,\boldsymbol\lambda)\left(1-\sum_{j=1}^dk_j\mu_j\Delta t-\sum_{j=1}^d\sum_{i=1}^dk_j\mu_{ij}\Delta t\right)\\
&-\sum_{j=1}^d\int_{[\boldsymbol 0,\boldsymbol\lambda]}y_j\Delta tf(t,\boldsymbol  k,\boldsymbol  y)\ \mathrm d\boldsymbol  y+o(\Delta t).\end{align*}
Subtracting $F(t,\boldsymbol  k,\boldsymbol\lambda)$ from both sides, dividing by $\Delta t$ and taking the limit as $\Delta t\downarrow0$ gives us 
\begin{align*}
&\phantom=\frac{\partial F(t,\boldsymbol  k,\boldsymbol\lambda)}{\partial t}-\left[\frac{\partial F(t,\boldsymbol  k,\boldsymbol\lambda)}{\partial \lambda_1},\cdots,\frac{\partial F(t,\boldsymbol  k,\boldsymbol\lambda)}{\partial \lambda_d}\right](\boldsymbol  r\circ(\boldsymbol\lambda-\boldsymbol\lambda_0))\\
&=\sum_{j=1}^d\int_{[\boldsymbol 0,\boldsymbol\lambda]}y_jf(t,\boldsymbol  k-\boldsymbol  e_j,\boldsymbol  y)\ \mathrm d\boldsymbol  y+\sum_{j=1}^d(k_j+1)\mu_j\int_{[\boldsymbol 0,\boldsymbol\lambda]}\mathbb P(\boldsymbol  B_j\leq\boldsymbol\lambda-\boldsymbol  y)f(t,\boldsymbol  k+\boldsymbol  e_j,\boldsymbol  y)\ \mathrm d\boldsymbol  y\\
&+\sum_{j=1}^d\sum_{i=1}^d(k_j+1)\mu_{ij}F(t,\boldsymbol  k+\boldsymbol  e_j-\boldsymbol e_i,\boldsymbol  \lambda)-\sum_{j=1}^dk_j\mu_jF(t,\boldsymbol  k,\boldsymbol\lambda)\\
&-\sum_{j=1}^d\sum_{i=1}^dk_j\mu_{ij}F(t,\boldsymbol  k,\boldsymbol \lambda)-\sum_{j=1}^d\int_{[\boldsymbol 0,\boldsymbol\lambda]}y_jf(t,\boldsymbol  k,\boldsymbol  y)\ \mathrm d\boldsymbol  y.
\end{align*}
Next, we take the partial derivative with respect to the intensity $\lambda_j$ of each coordinate; i.e., we apply the differential operator $\frac{\partial^d}{\partial\lambda_1\cdots\lambda_d}$ to both sides of the last equation. 
Here we apply Leibniz' integral rule and we use our assumption $\mathbb P(B_{ij}\leq0)=0$ for all $i,j\in[d]$. This yields 
\begin{align}
&\phantom=\frac{\partial f(t,\boldsymbol  k,\boldsymbol\lambda)}{\partial t}
-\sum_{j=1}^dr_j\frac{\partial}{\partial \lambda_j}(\lambda_j f(t,\boldsymbol  k,\boldsymbol\lambda))+\sum_{j=1}^dr_j\lambda_{j,0}\frac{\partial f(t,\boldsymbol  k,\boldsymbol\lambda)}{\partial \lambda_j}\nonumber\\
&=\sum_{j=1}^d\lambda_jf(t,\boldsymbol  k-\boldsymbol  e_j,\boldsymbol\lambda)
+\sum_{j=1}^d(k_j+1)\mu_j\int_{[\boldsymbol 0,\boldsymbol\lambda]}\frac{\partial^d\mathbb P(\boldsymbol  B_j\leq\boldsymbol\lambda-\boldsymbol  y)}{\partial\lambda_1\cdots\partial \lambda_d}f(t,\boldsymbol  k+\boldsymbol  e_j,\boldsymbol  y)\ \mathrm d\boldsymbol  y\label{eqjointdensity}\\
&+\sum_{j=1}^d\sum_{i=1}^d(k_j+1)\mu_{ij}f(t,\boldsymbol  k+\boldsymbol  e_j-\boldsymbol e_i,\boldsymbol  \lambda)-\sum_{j=1}^d(k_j\mu_j+\lambda_j)f(t,\boldsymbol  k,\boldsymbol\lambda)-\sum_{j=1}^d\sum_{i=1}^dk_j\mu_{ij}f(t,\boldsymbol  k,\boldsymbol \lambda).\nonumber\end{align}
Our next step is transforming to $\xi(t,\boldsymbol  k,\boldsymbol  s)$ by applying the integral operator $\int_{\mathbb R_+^d}e^{-\boldsymbol  s^\top\boldsymbol \lambda}\cdot\ \mathrm d\boldsymbol\lambda$ to both sides of \eqref{eqjointdensity}. 
We can do this term by term; the calculations rely on integration by parts, Tonelli's theorem, and swapping the order of differentiation and integration. 
We obtain
\begin{align}
 \nonumber&\phantom=\frac{\partial \xi(t,\boldsymbol  k,\boldsymbol  s)}{\partial t}
 +\sum_{j=1}^d\left((r_js_j-1)\frac{\partial \xi(t,\boldsymbol  k,\boldsymbol  s)}{\partial s_j}
 +\frac{\partial \xi(t,\boldsymbol  k-\boldsymbol  e_j,\boldsymbol  s)}{\partial s_j}\right)
 +\sum_{j=1}^dr_j\lambda_{j,0}s_j\xi(t,\boldsymbol  k,\boldsymbol  s)\\
 &=\sum_{j=1}^d(k_j+1)\mu_j\beta_j(\boldsymbol  s)\xi(t,\boldsymbol  k+\boldsymbol  e_j,\boldsymbol  s)-\sum_{j=1}^dk_j\mu_j\xi(t,\boldsymbol  k,\boldsymbol  s)\nonumber\\
 &+\sum_{j=1}^d\sum_{i=1}^d(k_j+1)\mu_{ij}\xi(t,\boldsymbol  k+\boldsymbol  e_j-\boldsymbol e_i,\boldsymbol s)-\sum_{j=1}^d\sum_{i=1}^dk_j\mu_{ij}\xi(t,\boldsymbol  k,\boldsymbol s).
\end{align} 
Now multiplying by $\boldsymbol  z^{\boldsymbol  k}$ and summing over $\boldsymbol  k\in\mathbb N_0^d$ gives \eqref{multivariatePDE}. 

The final part of the theorem follows by the method of characteristics, similarly as in \cite{Infinite server queues}, Theorem~3.1, by parametrising $s_j$ and $z_j$ by $u\in[0,t]$, with $s_j(t)=s_j$ and $z_j(t)=z_j$, after which we change variables to $u'=t-u$.
$\hfill\Box$

\medskip

\noindent\textit{Proof of Theorem~\ref{corjointmom}}.
We rewrite \eqref{multivariatePDE} to the joint transform by substituting its definition
$$\zeta(t,\boldsymbol  z,\boldsymbol  s)=\mathbb E\left[\boldsymbol  z^{\boldsymbol  Q(t)}e^{-\boldsymbol  s^\top\boldsymbol\Lambda(t)}\right]=\mathbb E\left[\prod_{j=1}^dz_j^{Q_j(t)}e^{-s_j\Lambda_j(t)}\right].$$
This gives us the PDE
\begin{align}
&\hspace{-5mm}\phantom=\frac{\mathrm d}{\mathrm dt}\mathbb E\left[\prod_{l=1}^dz_l^{Q_l(t)}e^{-s_l\Lambda_l(t)}\right]
-\sum_{j=1}^d(r_js_j+z_j-1)\mathbb E\left[\prod_{l=1}^dz_l^{Q_l(t)}\Lambda_j(t)e^{-s_l\Lambda_l(t)}\right]\nonumber\\
&\phantom=+\sum_{j=1}^d\mu_j(z_j-\beta_j(\boldsymbol  s))\mathbb E\left[Q_j(t)z_j^{Q_j(t)-1}e^{-s_j\Lambda_j(t)}\prod_{\substack{l=1\\l\neq j}}^dz_l^{Q_l(t)}e^{-s_l\Lambda_l(t)}\right]\nonumber\\
&\phantom=+\sum_{j=1}^d\sum_{i=1}^d\mu_{ij}(z_j-z_i)\mathbb E\left[Q_j(t)z_j^{Q_j(t)-1}e^{-s_j\Lambda_j(t)}\prod_{\substack{l=1\\l\neq j}}^dz_l^{Q_l(t)}e^{-s_l\Lambda_l(t)}\right]\nonumber\\
&=-\sum_{j=1}^dr_j\lambda_{j,0}s_j\mathbb E\left[\prod_{l=1}^dz_l^{Q_l(t)}e^{-s_l\Lambda_l(t)}\right].\label{multivariatePDE2}
\end{align} 
We differentiate \eqref{multivariatePDE2} $\boldsymbol g\in\mathbb N_0^d$ times with respect $\boldsymbol s$, meaning that we differentiate $g_j$ times with respect to $s_j$, for each $j\in[d]$. 
After this, we set $\boldsymbol s=\boldsymbol 0$. 
Similarly, we differentiate $\boldsymbol  q\in\mathbb N_0^d$ times with respect to $\boldsymbol  z$, after which we set $\boldsymbol  z=\boldsymbol  1$.  
This yields the following ODE for 
$\mathbb E\left[\prod_{l=1}^d\bar Q_l^{q_l}(t)\Lambda_l^{g_l}(t)\right]$:
\begin{align}
&\hspace{-5mm}\phantom=\nonumber\frac{\mathrm d}{\mathrm dt}\mathbb E\left[\prod_{l=1}^d\bar Q_l^{q_l}(t)\Lambda_l^{g_l}(t)\right]
  +\sum_{j=1}^d(g_jr_j+q_j\mu_j)\mathbb E\left[\prod_{l=1}^d\bar Q_l^{q_l}(t)\Lambda_l^{g_l}(t)\right]\\
&\phantom=-\sum_{j=1}^dq_j\mathbb E\left[\bar Q_j^{q_j-1}(t)\Lambda_j^{g_j+1}(t)\prod_{\substack{l=1\\l\neq j}}^d\bar Q_l^{q_l}(t)\Lambda_l^{g_l}(t)\right]\nonumber\\
\nonumber&\phantom=-\sum_{j=1}^d\mu_j\left(\sum_{k=1}^dg_kb_{kj}\mathbb E\left[\bar Q_j^{q_j+1}(t)\Lambda_k^{g_k-1}(t)\prod_{\substack{l=1\\l\neq j}}^d\bar Q_l^{q_l}(t)\prod_{\substack{l=1\\l\neq k}}^d\bar \Lambda_l^{g_l}(t)\right]\right)\\
&\phantom=+\sum_{j=1}^d\sum_{i=1}^d\mu_{ij}\left(q_j\mathbb E\left[\prod_{l=1}^d\bar Q_l^{q_l}(t)\Lambda_l^{g_l}(t)\right]-q_i\mathbb E\left[\bar Q_i^{q_i-1}(t)\bar Q_j^{q_j+1}(t)\prod_{\substack{l=1\\l\neq i,j}}^d\bar Q_l^{q_l}(t)\prod_{l=1}^d\bar \Lambda_l^{g_l}(t)\right]\right)
\nonumber\\
&=\sum_{j=1}^dg_jr_j\lambda_{j,0}\mathbb E\left[\bar Q_j^{q_j}(t)\Lambda_j^{g_j-1}(t)\prod_{\substack{l=1\\l\neq j}}^d\bar Q_l^{q_l}(t)\Lambda_l^{g_l}(t)\right]\nonumber\\
&\phantom=+\sum_{j=1}^d\mu_j\sum_{\substack{\boldsymbol0\leq\boldsymbol{\ell}\leq\boldsymbol g\\\|\boldsymbol \ell\|_1\leq\|\boldsymbol g\|_1-2}}\prod_{l=1}^d\binom{g_l}{\ell_l}\mathbb E\left[B_{lj}^{g_l-\ell_l}\right]\mathbb E\left[\bar Q_j^{q_j+1}(t)\Lambda_j^{\ell_j}(t)\prod_{\substack{k=1\\k\neq j}}^d\bar Q_k^{q_k}(t)\bar \Lambda_k^{\ell_k}(t)\right],
\label{ODEmomentsdevious}
\end{align}
We can write \eqref{ODEmomentsdevious} more compactly as \eqref{ODEmomentscompact}.
$\hfill\Box$

\medskip

\noindent\textit{Proof of Theorem~\ref{ODEZsol}}.
First, \eqref{DHODEsol} is immediate from \eqref{ODEZlinear}. 
In order to calculate $e^{A^{(n+1)}}$, we need to exploit the structure of $A^{(n+1)}$: the superdiagonal is of the form $c_1[n:1]$, the subdiagonal of the form $c_2[1:n]$, and the diagonal of the form $c_3[0:n]+c_4[n:0]=c_4n+(c_3-c_4)[0:n]$. 
Here, we write $[k:m]:=\{k,k\pm1,k\pm2,\ldots,m\mp1,m\}$ for the set of integers between $k,m\in\mathbb Z$.

In fact, $A^{(n+1)}$ is a generalization of the Clement-Kac-Sylvester matrix. 
Using \cite{Chu}, \S3, its characteristic polynomial $p_{n+1}(w)=\det\big(A^{(n+1)}-wI_{n+1}\big)$ is given by 
$$p_{n+1}(w)=\prod_{k=0}^n\left(-w-n\mu+\frac{n(\mu-r)}{2}+\frac{n-2k}2\sqrt{(\mu-r)^2+4\mu b_1}\right),$$ 
hence the eigenvalues of $A^{(n+1)}$ are given by \eqref{A1eigenvalues}. 
Finally, formula \eqref{matrixexpDH} follows from Lagrange-Sylvester interpolation; see \cite{LagrangeSylvester}, Theorem~8.1.

For stability, take some $n\in\mathbb N$, and note that we have convergence of the moments $Z^{(n+1)}(t)$ if and only if $e^{A^{(n+1)}t}\to0$ as $t\to\infty$, which holds if and only if 
$$\lambda^{(n+1)}_{\max}=\lambda^{(n+1)}_0=-\frac n2\left(\mu+r-\sqrt{(\mu-r)^2+4b_1}\right)=-\frac n2\left(\mu+r-\sqrt{(\mu+r)^2-4\mu(r-b_1)}\right)<0,$$ 
which in turn holds if and only if $4\mu(r-b_1)>0$ so if and only if $b_1/r<1$.
$\hfill\Box$

\medskip

\noindent\textit{Proof of Theorem~\ref{DHstationary}}.
This can be proved by solving the system of ODEs for $n=1$ by Theorem~\ref{ODEZsol}, using \begin{align}
A^{(2)}=\begin{bmatrix}-\mu&1\\\mu b_1&-r\end{bmatrix}, \:\:\:\:C^{(2)}(s)=\begin{bmatrix}0\\r\lambda_0\end{bmatrix},\:\:\:\:Z^{(2)}(0)=\begin{bmatrix}0\\\lambda_0\end{bmatrix},
\end{align}
and letting $t\to\infty$. 
Alternatively, use \eqref{ODEZlinear}, set the derivative equal to $0$, and solve for $Z^{(2)}(\infty)$.
$\hfill\Box$

\medskip

\noindent\textit{Proof of Corollary~\ref{steadystateequaldist}}.
We know from Theorem~\ref{thstochasticdomination} that $Q(t)\geq_{\mathrm{st}}\tilde Q(t)$ and $\Lambda(t)\geq_{\mathrm{st}}\tilde \Lambda(t)$ for all $t\geq0$. 
Furthermore, from Theorem~\ref{DHstationary}  and \cite{Infinite server queues}, Corollary~3.9, we know that 
$$\mathbb E[Q(\infty)]=\mathbb E[\tilde Q(\infty)]=\frac{\lambda_0r}{\mu(r-b_1)}\quad\text{and}\quad \mathbb E[\Lambda(\infty)]=\mathbb E[\tilde\Lambda(\infty)]=\frac{\lambda_0r}{r-b_1}.$$

Hence, to prove the claim, it suffices to prove that if $X(\cdot), Y(\cdot)$ are stochastic processes on $[0,\infty)$ such that for all $t\geq0$, $X(t)\geq_{\mathrm{st}}Y(t)$, while 
\[X(t)\stackrel{\mathcal D}\to X,\quad Y(t)\stackrel{\mathcal D}\to Y,\]
as $t\to\infty$, with $\mathbb E[X]=\mathbb E[Y]$, then $X\stackrel{\mathcal D}=Y$.

Indeed, let $F_{X(t)}$ be the CDF of $X(t)$ and $F_{Y(t)}$ be the CDF of $Y(t)$. 
Then, for all $t\geq0,z\in\mathbb R$, $F_{X(t)}(z)\leq F_{Y(t)}(z)$ since $X(t)\geq_{\mathrm{st}}Y(t)$. 
Furthermore, for each continuity point $z$ of $F_X$, $F_{X(t)}(z)\to F_{X}(z)$; similarly, for each continuity point $z$ of $F_Y$, $F_{Y(t)}(z)\to F_{Y}(z)$. 
Hence, for all but at most countably many points $z$, 
$$F_X(z)=\lim_{t\to\infty}F_{X(t)}(z)\leq\lim_{t\to\infty}F_{Y(t)}(z)=F_Y(z).$$ 
If this inequality does not hold for some $z$, then by right-continuity it does not hold for a continuum of values $[z,z+\epsilon]$. 
By contradiction, $F_X(z)\leq F_Y(z)$ for all $z\in\mathbb R$. Since $X,Y\geq0$, it follows that 
$$0=\mathbb E[X]-\mathbb E[Y]=\int_0^\infty(1-F_X(z))\ \mathrm dz-\int_0^\infty(1-F_Y(z))\ \mathrm dz=\int_0^\infty(F_Y(z)-F_X(z))\ \mathrm dz.$$ 
Since the integrand is nonnegative for all $z\geq0$, it follows that $F_X(z)=F_Y(z)$ for almost all $z\geq0$. 
Inequality at a point would again imply inequality on an interval of positive measure. 
Hence, $F_X=F_Y$, i.e., $X\stackrel{\mathcal D}= Y$.
$\hfill\Box$

\medskip

\noindent\textit{Proof of Corollary~\ref{cor:gam}}.
The result follows from Corollary~\ref{steadystateequaldist}, in combination with \cite{Infinite server queues}, Theorems~6.4 and~6.6.
$\hfill\Box$

\newpage
\clearpage\thispagestyle{empty}
\ \ \bigskip

\begin{center}
\textbf{\uppercase{Online supplement to ``Delayed Hawkes birth-death processes''}}    
\end{center}

\vb

    
{\small \noindent{\textsc{Abstract.}}\ \ In this online supplement to our paper ``Delayed Hawkes birth-death processes'', we prove Theorem~4. 
For context, notation and definitions, see the main paper.}

\bigskip


\vb


               

\setcounter{equation}{0}
\setcounter{theorem}{0}
\setcounter{section}{0}
\setcounter{definition}{0}
\setcounter{lemma}{0}
\setcounter{remark}{0}

\renewcommand{\thesection}{\Alph{section}}
\renewcommand{\thesubsection}{\thesection.\Roman{subsection}}
\renewcommand{\theequation}{\thesection.\Roman{equation}}
\renewcommand{\thetheorem}{\thesection.\Roman{theorem}}
\renewcommand{\thelemma}{\thesection.\Roman{lemma}}
\renewcommand{\thedefinition}{\thesection.\Roman{definition}}
\renewcommand{\theremark}{\thesection.\Roman{remark}}

\section{Proof of Theorem~4}
\subsection{Joint transform characterization}

The proof is a suitable modification of the work done in \cite{multivariateKLM-app}. 
We exploit the cluster representation provided in \cite{DHpaper-app}, Definition~2. 
First, we summarize the multidimensional notation as introduced in \cite{multivariateKLM-app}. 
We need this notation to gain insight in the clustering structure, and to state and prove the results we are after.

Before we come to that, we note that this cluster representation is useful for several reasons. 
First, modulo the time shift corresponding to the arrival times, clusters generated by immigrants in the same coordinate are i.i.d. 
Next, cluster processes are generated independently across source components. 
Finally, each event from the same source component generates offspring by the same iterative procedure, since each child itself determines a cluster, i.e., there is \textit{self-similarity}.

We operationalize these ideas as follows. 
Let $j\in[d]$. 
For each immigrant $(T_r^{(0)},J_r^{(0)},j)$, we denote the $d$-dimensional cluster process it generates as $\boldsymbol S_j^{\boldsymbol N}(\cdot)$. 
We also consider the \textit{birth-death cluster} $\boldsymbol S_j^{\boldsymbol Q}(\cdot)$ and the \textit{rate cluster} $\boldsymbol S_j^{\boldsymbol \lambda}(\cdot)$ generated by this immigrant, which give the number of remaining offspring (including the parent) and remaining intensity increases caused by the immigrant arrival. 
From now on, for $t\geq T_r^{(0)}$, we interpret $u=t-T_r^{(0)}$ as the time elapsed since the arrival of the corresponding immigrant. 
We are dealing with $d$-dimensional cluster processes, of which we write the components as 
\begin{equation}
\boldsymbol S_j^{\boldsymbol N}(u)=\begin{bmatrix}S_{1\leftarrow j}^{\boldsymbol N}(u)\\\vdots\\S_{d\leftarrow j}^{\boldsymbol N}(u)\end{bmatrix},\quad\boldsymbol S_j^{\boldsymbol Q}(u)=\begin{bmatrix}S_{1\leftarrow j}^{\boldsymbol Q}(u)\\\vdots\\S_{d\leftarrow j}^{\boldsymbol Q}(u)\end{bmatrix},\quad \boldsymbol S_j^{\boldsymbol \lambda}(u)=\begin{bmatrix}S_{1\leftarrow j}^{\boldsymbol \lambda}(u)\\\vdots\\S_{d\leftarrow j}^{\boldsymbol \lambda}(u)\end{bmatrix}.\label{eqclusteringprocesses}
\end{equation} 
Here, $S_{i\leftarrow j}^{\boldsymbol N}(u)$ records the number of events in component $i$ up to time $u$ with as oldest ancestor the immigrant generating $\boldsymbol S_j^{\boldsymbol N}(\cdot)$, including the immigrant itself when $i=j$. 
Similarly, $S_{i\leftarrow j}^{\boldsymbol Q}(u)$ records the number of non-expired events in component $i$ up to time $u$ with as oldest ancestor the immigrant generating $\boldsymbol S_j^{\boldsymbol Q}(\cdot)$, including the ancestor itself if $i=j$ and if the ancestor has not yet left the system. 
Finally, $S_{i\leftarrow j}^{\boldsymbol \lambda}(u)$ records aggregated  change in the intensity of component $i$ caused by jumps with excitation functions $h_{im,J,\omega}$, following arrivals in component $m$ with lifetime $J$ within the cluster $\boldsymbol S_j^{\boldsymbol \lambda}(\cdot)$ generated by an immigrant in component $j$. 
For each $S_{i\leftarrow j}^{\boldsymbol \star}(\cdot)$, $\boldsymbol \star\in\{\boldsymbol N,\boldsymbol Q,\boldsymbol \lambda\}$, note that changes in $i$ within the cluster generated by an immigrant in $j$ might propagate through other dimensions $m\in[d]$ due to the multivariate setting.

An immigration event in some coordinate $j$ generates first-generation offspring in \textit{all} coordinates, which in turn constitute clusters themselves, called \textit{subclusters}, which are second-generation offspring of the immigrant. 
To analyze the self-similarity inherent in this process, for $\boldsymbol \star\in\{\boldsymbol N,\boldsymbol Q,\boldsymbol \lambda\}$, we define the matrix process 
\begin{equation}
\boldsymbol S^{\boldsymbol \star}(\cdot):=\begin{bmatrix}\boldsymbol S_1^{\boldsymbol \star}(\cdot)\ |&\cdots&|\ \boldsymbol S_d^{\boldsymbol \star}(\cdot)\end{bmatrix}=\begin{bmatrix}S_{1\leftarrow 1}^{\boldsymbol \star}(\cdot)&\cdots&S_{1\leftarrow d}^{\boldsymbol \star}(\cdot)\\\vdots&\ddots&\vdots\\S_{d\leftarrow 1}^{\boldsymbol \star}(\cdot)&\cdots&S_{d\leftarrow d}^{\boldsymbol \star}(\cdot)\end{bmatrix}=:\begin{bmatrix}\boldsymbol S_{(1)}^{\boldsymbol \star}(\cdot)\\\vdots\\\boldsymbol S_{(d)}^{\boldsymbol \star}(\cdot)\end{bmatrix}.\label{eqclusteringprocesses2}
\end{equation} 
Note that the $j$th column $\boldsymbol S_j^{\boldsymbol \star}(\cdot)$ of this matrix process corresponds to offspring events originating in coordinate $j$, while the $i$th row $\boldsymbol S_{(i)}^{\boldsymbol \star}(\cdot)$ describes offspring events arriving in component $i$.

Using the clustering processes defined in \eqref{eqclusteringprocesses} and \eqref{eqclusteringprocesses2}, we can state distributional equalities for the component processes $N_i,Q_i,\lambda_i$. 
Indeed, letting $I_j(\cdot)$ be the immigration process in coordinate $j\in[d]$, i.e., a homogeneous Poisson process of rate $\lambda_{j,0}$, we have 
\begin{align}
N_i(t)&\stackrel{\mathcal D}=\sum_{j=1}^d\sum_{k=1}^{I_j(t)}S_{i\leftarrow j}^{\boldsymbol N}(t-T_k);\nonumber\\Q_i(t)&\stackrel{\mathcal D}=\sum_{j=1}^d\sum_{k=1}^{I_j(t)}S_{i\leftarrow j}^{\boldsymbol Q}(t-T_k);\label{multdistequalitiesNQlambda}\\\nonumber\lambda_i(t)&\stackrel{\mathcal D}=\lambda_{i,0}+\sum_{j=1}^d\sum_{k=1}^{I_j(t)}S_{i\leftarrow j}^{\boldsymbol \lambda}(t-T_k).
\end{align} 
Similar distributional equations can be formulated for the cluster processes, using the observation that each cluster itself generates subclusters. 
To exploit this structure, letting $\boldsymbol X(\cdot)$ be an $\mathbb R_+^d$-valued time-dependent process and $P\geq0$, for $j\in[d]$ we define the functional 
\begin{equation}\mathcal A_j(P,\boldsymbol X(\cdot))(u)=P+\sum_{m=1}^d\sum_{k=1}^{K_{mj,J,\omega}(u)}X_m(u-T_k),
\end{equation} 
where $T_k$ are arrival times in component $m$, and where $K_{mj,J,\omega}(\cdot)$ denotes an inhomogeneous Poisson process of rate $h_{mj,J,\omega}$. 
Here, it is understood that $J\sim J_j$, which is the same for each target coordinate $m$, and it is understood that the excitation functions $h_{mj,J,\omega}$ are conditionally independent. 
Finally, whenever $P$ is an expression of $J$, it is understood that $J$ is again the lifetime of the immigrant in coordinate $j$ under consideration, i.e., the same $J$ as appearing in $K_{mj,J,\omega}(\cdot)$.  
Using this functional, we have the following distributional equalities for the cluster processes: 
\begin{align}
S_{i\leftarrow j}^{\boldsymbol N}(u)&\stackrel{\mathcal D}=\mathcal A_j\left(\mathbf1\{i=j\},\boldsymbol S_{(i)}^{\boldsymbol N}(\cdot)\right)(u);\nonumber\\S_{i\leftarrow j}^{\boldsymbol Q}(u)&\stackrel{\mathcal D}=\mathcal A_j\left(\mathbf1\{i=j\}\mathbf 1\{J>u\},\boldsymbol S_{(i)}^{\boldsymbol Q}(\cdot)\right)(u)\label{distributionalequalitiesmathcalA};\\S_{i\leftarrow j}^{\boldsymbol \lambda}(u)&\stackrel{\mathcal D}=\mathcal A_j\left(h_{ij,J,\omega}(\cdot),\boldsymbol S_{(i)}^{\boldsymbol \lambda}(\cdot)\right)(u).\nonumber
\end{align}

As in the Markovian case, to characterize the probabilistic behavior of the joint process $(\boldsymbol Q(\cdot),\boldsymbol\lambda(\cdot))$, we wish to characterize its joint Z- and Laplace transform. 
We define such a transform for general multivariate joint processes with first $d$-dimensional component $\mathbb N_0^d$-valued and second $d$-dimensional component $\mathbb R_+^d$-valued.

\begin{definition}
Let $(\boldsymbol X(\cdot),\boldsymbol Y(\cdot))$ be a stochastic process taking values in $\mathbb N_0^d\times\mathbb R_+^d$. 
For any $t\in\mathbb R_+$, the \textit{joint transform} of $(\boldsymbol X(u),\boldsymbol Y(u))$ is defined by 
\begin{equation}
\mathcal J_{\boldsymbol X,\boldsymbol Y}(u)\equiv\mathcal J_{\boldsymbol X,\boldsymbol Y}(u,\boldsymbol s,\boldsymbol z):=\mathbb E\left[\boldsymbol z^{\boldsymbol X(u)}e^{-\boldsymbol s^\top\boldsymbol Y(u)}\right]=\mathbb E\left[\prod_{i=1}^d z_i^{X_i(u)}e^{-s_iY_i(u)}\right],
\end{equation} 
where $\boldsymbol s\in\mathbb R_+^d$ and $\boldsymbol z\in[-1,1]^d$. 
The expectation is w.r.t.\ the filtration at $t=0$. 
We call the space of such transforms $\mathbb J$; we write $\mathcal J_{\boldsymbol X,\boldsymbol Y}(\cdot)\in\mathbb J$.

Furthermore, when we have an $\mathbb N_0^{d\times d}\times\mathbb R_+^{d\times d}$-valued matrix stochastic process $(\boldsymbol X(\cdot),\boldsymbol Y(\cdot))$ with $j$th column processes $(\boldsymbol X_j(\cdot),\boldsymbol Y_j(\cdot))$, then we define $\mathbb J^d$ as the $d$-dimensional analogue of $\mathbb J$, with $\mathcal J_{\boldsymbol X,\boldsymbol Y}(\cdot)\in\mathbb J^d$ defined by 
\begin{equation}
\boldsymbol{\mathcal J}_{\boldsymbol X,\boldsymbol Y}(u):=\begin{bmatrix}\mathcal J_{\boldsymbol X_1,\boldsymbol Y_1}(u)\\\vdots\\\mathcal J_{\boldsymbol X_d,\boldsymbol Y_d}(u)\end{bmatrix},
\end{equation} 
where $\boldsymbol{\mathcal J}_{\boldsymbol X_j,\boldsymbol Y_j}(\cdot)\in\mathbb J$. 
\end{definition}

As indicated before the definition, our aim is to characterize $\mathcal J_{\boldsymbol Q,\boldsymbol \lambda}(\cdot)$ defined by $\mathcal J_{\boldsymbol Q,\boldsymbol \lambda}(t)=\mathbb E\left[\boldsymbol z^{\boldsymbol Q(t)}e^{-\boldsymbol s^\top\boldsymbol \lambda(t)}\right]$, with initial conditions $\boldsymbol Q(0)=0$ and $\boldsymbol \lambda(0)=\boldsymbol \lambda_0$. 
We start by using the distributional equalities (\ref{multdistequalitiesNQlambda}) to express $\mathcal J_{\boldsymbol Q,\boldsymbol \lambda}(\cdot)$ in the joint transform of $(\boldsymbol S_j^{\boldsymbol Q}(\cdot),\boldsymbol S_j^{\boldsymbol \lambda}(\cdot))$

\begin{theorem}\label{jointtransformprocesstocluster}
The joint transform $\mathcal J_{\boldsymbol Q,\boldsymbol \lambda}(\cdot)$ satisfies 
\begin{equation}
\mathcal J_{\boldsymbol Q,\boldsymbol \lambda}(t,\boldsymbol s,\boldsymbol z)=\prod_{j=1}^d\exp\left(-\lambda_{j,0}\left(t+s_j-\int_0^t\mathcal J_{\boldsymbol S_j^{\boldsymbol Q},\boldsymbol S_j^{\boldsymbol \lambda}}(u,\boldsymbol s,\boldsymbol z)\ \mathrm du\right)\right).
\end{equation}
\end{theorem}
\begin{proof}
The proof proceeds as the proof of \cite{multivariateKLM-app}, Theorem~1, since that proof only depends on their distributional equalities \cite{multivariateKLM-app}, eqn.\ (13), which are the same as the equalities \eqref{multdistequalitiesNQlambda} in our case.
\end{proof}

This theorem shows that we can characterize the probabilistic behavior of $(\boldsymbol Q(\cdot),\boldsymbol\Lambda(\cdot))$ if we can characterize the joint transform of $(\boldsymbol S_j^{\boldsymbol Q}(\cdot),\boldsymbol S_j^{\boldsymbol \lambda}(\cdot))$ for any $j\in[d]$. 
This will be the subject of the next subsection, where we show that these transforms can be identified as a fixed point of a certain mapping. 
Furthermore, we will show that iterates of that mapping converge to the fixed point, for \textit{any} starting point, thereby giving an iterative procedure to approximate those joint transforms.

\subsection{Fixed-point theorem and convergence results}
We expressed $\mathcal J_{\boldsymbol Q,\boldsymbol \lambda}(\cdot)$ in terms of $\mathcal J_{\boldsymbol S_j^{\boldsymbol Q},\boldsymbol S_j^{\boldsymbol \lambda}}(\cdot)$, $j\in[d]$. 
Hence, to obtain an full characterization of $\mathcal J_{\boldsymbol Q,\boldsymbol \lambda}(\cdot)$, we need a method to determine $\mathcal J_{\boldsymbol S_j^{\boldsymbol Q},\boldsymbol S_j^{\boldsymbol \lambda}}(\cdot)$. Write $\mathcal G_j(\cdot)=\mathcal J_{\boldsymbol S_j^{\boldsymbol Q},\boldsymbol S_j^{\boldsymbol \lambda}}(\cdot)\in\mathbb J$. 
We aim to find $\mathcal G_j$ for all $j\in[d]$. 
This is equivalent to finding the vector-valued transform 
\begin{equation}
\mathbb J^d\ni\boldsymbol{\mathcal G}(\cdot):=\boldsymbol{\mathcal J}_{\boldsymbol S^{\boldsymbol Q},\boldsymbol S^{\boldsymbol \lambda}}(\cdot)=\begin{bmatrix}\mathcal J_{\boldsymbol S_1^{\boldsymbol Q},\boldsymbol S_1^{\boldsymbol \lambda}}(\cdot)\\\vdots\\\mathcal J_{\boldsymbol S_d^{\boldsymbol Q},\boldsymbol S_d^{\boldsymbol \lambda}}(\cdot)\end{bmatrix}=\begin{bmatrix}\mathcal G_1(\cdot)\\\vdots\\\mathcal G_d(\cdot)\end{bmatrix}.
\end{equation} 
Next, we define a mapping $\phi$ for which we will prove that $\boldsymbol{\mathcal G}(\cdot)$ is a fixed point, and for which iterates of an arbitrary $\boldsymbol{\mathcal J}^{(0)}(\cdot)\in\mathbb J^d$ will converge to $\boldsymbol{\mathcal G}(\cdot)$.

\begin{definition}\label{defphi}
Let $\phi:\mathbb J^d\to\mathbb J^d$ be the mapping defined by 
\begin{equation}
\boldsymbol{\mathcal J}(\cdot)=\begin{bmatrix}\mathcal J_1(\cdot)\\\vdots\\\mathcal J_d(\cdot)\end{bmatrix}\mapsto\begin{bmatrix}\phi_1(\mathcal J_1,\ldots,\mathcal J_d)(\cdot)\\\vdots\\\phi_d(\mathcal J_1,\ldots,\mathcal J_d)(\cdot)\end{bmatrix}=\begin{bmatrix}\phi_1(\boldsymbol{\mathcal J})(\cdot)\\\vdots\\\phi_d(\boldsymbol{\mathcal J})(\cdot)\end{bmatrix}=\phi(\boldsymbol{\mathcal J})(\cdot),
\end{equation} 
where for $j\in[d]$ 
\begin{align}\phi_j(\boldsymbol{\mathcal J})(u)&\equiv\phi_j(\boldsymbol{\mathcal J})(u,\boldsymbol s,\boldsymbol z)\nonumber\\&=\mathbb E_{J,\omega}\left[z_j^{\mathbf1\{J>u\}}\prod_{i=1}^de^{-s_ih_{ij,J,\omega}(u)}\prod_{m=1}^d\exp\left(-\int_0^uh_{mj,J,\omega}(v)\left(1-\mathcal J_m(u-v,\boldsymbol s,\boldsymbol z)\right)\ \mathrm dv\right)\right].
\end{align}
\end{definition}

We need to prove that this mapping is well-defined, i.e., that for $\boldsymbol{\mathcal J}_{\boldsymbol X,\boldsymbol Y}(\cdot)\in\mathbb J^d$, also $\phi(\boldsymbol{\mathcal J}_{\boldsymbol X,\boldsymbol Y})(\cdot)\in\mathbb J^d$. 
We postpone this until after the next lemma and theorem, which prove that $\boldsymbol{\mathcal G}(\cdot)$ is a fixed point of $\phi$. 
After we have proved that $\phi$ is well-defined, we will prove that it is continuous w.r.t.\ some appropriate topology. 
Thereafter, we show that iterates of an arbitrary $\boldsymbol{\mathcal J}^{(0)}(\cdot)\in\mathbb J^d$ under $\phi$ will converge to  $\boldsymbol{\mathcal G}(\cdot)$. 
This final result describes a method to determine $\mathcal J_{\boldsymbol S_j^{\boldsymbol Q},\boldsymbol S_j^{\boldsymbol \lambda}}(\cdot)$ explicitly, for any $j\in[d]$, hence completing our characterization of $\mathcal J_{\boldsymbol Q,\boldsymbol \lambda}(\cdot)$.

Below, when we prove that $\boldsymbol{\mathcal G}(\cdot)$ is a fixed point of $\phi$, we need to specify when offspring events arrive exactly, given our knowledge that the offspring events arrive before time $u$, where $u$ is the remaining time after the arrival of the source event. 
Remember that offspring events arrive according to an inhomogeneous Poisson process by the cluster representation \cite{DHpaper-app}, Definition~2. 
This implies that those offspring events are positioned in $[0,u]$ according to the normalized restriction to $[0,u]$ of the intensity measure corresponding to the inhomogeneous Poisson process. 
For $v\in[0,u]$, let $P_{ij,J,\omega}(v|u)$ be the probability that an offspring event in coordinate $i$ caused by an immigrant in coordinate $j$ was already generated before time $v$, conditional on being generated before time $u$, and conditional on $J,\omega$.

\begin{lemma}\label{clusterlemma}
Consider the cluster process $\boldsymbol S_j^{\boldsymbol\star}$ for $\boldsymbol \star\in\{\boldsymbol N,\boldsymbol Q,\boldsymbol \lambda\}$ generated by an immigrant event $(T^{(0)},J^{(0)},j)$ in component $j\in[d]$, and let $u=t-T^{(0)}$ be the time elapsed since its arrival. Then the following statements hold.
\begin{enumerate}
\item Subclusters are i.i.d.\ modulo the time shift: for each $m\in[d]$, modulo the time shifts $T_k$ corresponding to the arrival times of the first generation events $(T^{(1)}_k)_{k\in\mathbb N}$, the sequence $\left(\boldsymbol S^{\boldsymbol\star}_m(u-T_k)\right)_{k\in[n]}$ is i.i.d., conditional on $\{K_{mj,J^{(0)}}=n\}$ for some $n\in\mathbb N$.
\item For $v\leq u$ the probability $P_{ij,J,\omega}(v|u)$ is differentiable with derivative \begin{equation}p_{ij,J,\omega}(v|u)=\frac{h_{ij,J,\omega}(v)}{\int_0^uh_{ij,J,\omega}(s)\ \mathrm ds}.\end{equation}
\end{enumerate}
\end{lemma}
\begin{proof}
The proof is analogous to the proof of \cite{multivariateKLM-app}, Lemma~3. 
Note that we use that $h_{ij,J}$ is a.s.\ piecewise continuous, for almost all realization of $J\sim J_j$, in order to be able to differentiate the probability $P_{ij,J,\omega}(v|u)$.
\end{proof}

\begin{theorem}\label{multifixedpoint}
The vector of time-dependent joint transforms $\boldsymbol{\mathcal G}(\cdot):=\boldsymbol{\mathcal J}_{\boldsymbol S^{\boldsymbol Q},\boldsymbol S^{\boldsymbol \lambda}}(\cdot)$ satisfies $\boldsymbol{\mathcal G}(\cdot)=\phi(\boldsymbol{\mathcal G})(\cdot)$.
\end{theorem}
\begin{proof}
The idea of this proof is the same as that of \cite{multivariateKLM-app}, Theorem~2. 
It suffices to prove, for arbitrary $j\in[d]$ and $u\geq0$, that $\mathcal G_j(u)=\phi_j(\boldsymbol{\mathcal G})(u)$. 
In the proof we keep $\boldsymbol s$ and $\boldsymbol z$ fixed. 
Write $\boldsymbol K_{j,J}(u)=\begin{bmatrix}K_{1j,J}&\cdots&K_{dj,J}\end{bmatrix}^\top$ for the random (i.e., $\omega$-dependent) vector of Poisson processes of rate $h_{ij,J,\omega}$, where the random excitation functions $h_{ij,J,\omega}$ are assumed to be conditionally independent for $i\in[d]$. 
By using the distributional equalities (\ref{distributionalequalitiesmathcalA}), and with $J\sim J_j$ the lifetime of the immigrant in coordinate $j$, 
\begin{align}
\nonumber\mathcal G_j(u)&=\mathbb E_{J,\omega}\left[\mathbb E\left[\prod_{i=1}^dz_i^{S_{i\leftarrow j}^{\boldsymbol Q}(u)}e^{-s_iS_{i\leftarrow j}^{\boldsymbol \lambda}(u)}\bigg|J,\omega\right]\right]\\\nonumber&=\mathbb E_{J,\omega}\left[\sum_{\boldsymbol n\in\mathbb N_0^d}\mathbb E\left[\prod_{i=1}^dz_i^{S_{i\leftarrow j}^{\boldsymbol Q}(u)}e^{-s_iS_{i\leftarrow j}^{\boldsymbol \lambda}(u)}\bigg|\boldsymbol K_{j,J}(u)=\boldsymbol n\right]\mathbb P\left(\boldsymbol K_{j,J}(u)=\boldsymbol n\right|J,\omega)\right]\\&=\mathbb E_{J,\omega}\left[c(u)\sum_{\boldsymbol n\in\mathbb N_0^d}\mathbb E\left[\prod_{i=1}^dz_i^{\sum\limits_{m=1}^d\sum\limits_{k=1}^{n_m}S_{i\leftarrow m}^{\boldsymbol Q}(u-T_k)}e^{-s_i\sum\limits_{m=1}^d\sum\limits_{k=1}^{n_m}S_{i\leftarrow m}^{\boldsymbol \lambda}(u-T_k)}\right]\mathbb P\left(\boldsymbol K_{j,J}(u)=\boldsymbol n\right|J,\omega)\right],\nonumber
\end{align} 
where $$c(u):=z_j^{\mathbf1\{J>u\}}\prod_{i=1}^de^{-s_ih_{ij,J,\omega}(u)}.$$ 
Now we use the i.i.d.\ nature of the subclusters as in Lemma~\ref{clusterlemma} and the fact that next-generation offspring of a parent is distributed according to a Poisson process, to rewrite the inner expectation as a product over the source components of first-generation events. 
We let $T^{(mj)}$ be the r.v.\ with density $p_{mj,J,\omega}(v|u)$ as given in Lemma~\ref{clusterlemma}. 
These times are distributed as $T_k$ if those were sampled by $K_{mj,J,\omega}$. 
This leads to 
\begin{align*}
&\phantom=\mathbb E\left[\prod_{i=1}^dz_i^{\sum\limits_{m=1}^d\sum\limits_{k=1}^{n_m}S_{i\leftarrow m}^{\boldsymbol Q}(u-T_k)}e^{-s_i\sum\limits_{m=1}^d\sum\limits_{k=1}^{n_m}S_{i\leftarrow m}^{\boldsymbol \lambda}(u-T_k)}\right]\\&=\prod_{m=1}^d\mathbb E\left[\prod_{i=1}^dz_i^{S_{i\leftarrow m}^{\boldsymbol Q}(u-T^{(mj)})}e^{-s_iS_{i\leftarrow m}^{\boldsymbol \lambda}(u-T^{(mj)})}\right]^{n^m}=\prod_{m=1}^d\mathcal G_m\left(u-T^{(mj)}\right)^{n_m},
\end{align*} 
whence 
\begin{align*}
\mathcal G_j(u)=\mathbb E_{J,\omega}\left[c(u)\sum_{\boldsymbol n\in\mathbb N_0^d}\prod_{m=1}^d\left(\int_0^up_{mj,J,\omega}(v|u)\mathcal G_m\left(u-v\right)\ \mathrm dv\right)^{n_m}\mathbb P\left(\boldsymbol K_{j,J}(u)=\boldsymbol n\right|J,\omega)\right].
\end{align*} 
By Lemma~\ref{clusterlemma} we know that 
$$p_{mj,J,\omega}(v|u)=\frac{h_{mj,J,\omega}(v)}{\int_0^uh_{mj,J,\omega}(s)\ \mathrm ds},$$ 
while using that $K_{mj}(\cdot)$ are Poisson processes with intensity $h_{mj,J,\omega}$, we calculate 
$$\mathbb P\left(\boldsymbol K_{j,J}(u)=\boldsymbol n\right|J,\omega)=\prod_{m=1}^d\frac{\left(\int_0^uh_{mj,J,\omega}(s)\ \mathrm ds\right)^{n_m}}{n_m!}\exp\left(-\int_0^uh_{mj,J,\omega}(s)\ \mathrm ds\right).$$ 
Combining the previous three displays, 
\begin{align*}
\mathcal G_j(u)&=\mathbb E_{J,\omega}\left[c(u)\sum_{\boldsymbol n\in\mathbb N_0^d}\prod_{m=1}^d\frac1{n_m!}\left(\int_0^uh_{mj,J,\omega}(v)\mathcal G_m\left(u-v)\right)\ \mathrm dv\right)^{n_m}\exp\left(-\int_0^uh_{mj,J,\omega}(s)\ \mathrm ds\right)\right]\\&=\mathbb E_{J,\omega}\left[c(u)\prod_{m=1}^d\exp\left(\int_0^uh_{mj,J,\omega}(s)\left(\mathcal G_m(u-v)-1\right)\ \mathrm ds\right)\right].
\end{align*} 
Plugging in the definition of $c(u)$ again, the theorem follows.
\end{proof}

\begin{lemma}\label{lemmaphiwelldefined}
The mapping $\phi$ from Definition~\ref{defphi} is well-defined, i.e., for $\boldsymbol{\mathcal J}_{\boldsymbol X,\boldsymbol Y}(\cdot)\in\mathbb J^d$, we have $\phi(\boldsymbol{\mathcal J}_{\boldsymbol X,\boldsymbol Y})(\cdot)\in\mathbb J^d$.
\end{lemma}
\begin{proof}
The proof is analogous to the proof of \cite{multivariateKLM-app}, Lemma~1. 
Our analogue of their Theorem~2 is Theorem~\ref{multifixedpoint}. 
Also, we should replace $B_{ij}g_{ij}$ in their proof by $h_{ij,J,\omega}$.
\end{proof}

For the fixed-point theorem, we wish to show that iterates $\phi^n(\boldsymbol{\mathcal J}^{(0)})(u)$ for some arbitrary $\boldsymbol{\mathcal J}^{(0)}(\cdot)\in\mathbb J^d$ converge to a unique limit, namely the value $\boldsymbol{\mathcal G}(u)$ that we are after. 
To this end, we need an appropriate notion of distance on $\mathbb J^d$. 
We define a norm $\|\cdot\|_{\mathbb J^d}$ as a uniform Euclidean norm by \begin{equation}
\|\boldsymbol{\mathcal J}\|_{\mathbb J^d}=\sup_{\substack{u\in[0,t],\boldsymbol s\in\mathbb R_+^d\\\boldsymbol z\in[-1,1]^d}}\|\boldsymbol{\mathcal J}(u,\boldsymbol s,\boldsymbol z)\|=\sup_{u,\boldsymbol s,\boldsymbol z}\|\boldsymbol{\mathcal J}(u,\boldsymbol s,\boldsymbol z)\|,
\end{equation} 
where $\|\cdot\|$ denotes the Euclidean norm on $\mathbb R^d$.

\begin{lemma}\label{phicontinuouslemma}
The mapping $\phi$ is continuous w.r.t.\ $\|\cdot\|_{\mathbb J^d}$, if we work on a bounded interval $[0,t]$.
\end{lemma}
\begin{proof}
The proof is similar to the proof of \cite{multivariateKLM-app}, Lemma~2. 
Take $\boldsymbol{\mathcal J}(\cdot),\tilde{\boldsymbol{\mathcal J}}(\cdot)\in\mathbb J^d$. 
It suffices to prove continuity in each coordinate separately, i.e., for each $j\in[d]$ we prove that given $\epsilon>0$, we can find $\delta>0$ such that 
$$\|\boldsymbol{\mathcal J}-\tilde{\boldsymbol{\mathcal J}}\|_{\mathbb J^d}<\delta$$ 
implies that $\|\phi_j(\boldsymbol{\mathcal J})-\phi_j(\tilde{\boldsymbol{\mathcal J}})\|_{\mathbb J}<\epsilon$. 
We have 
\begin{align*}
&\|\phi_j(\boldsymbol{\mathcal J})-\phi_j(\tilde{\boldsymbol{\mathcal J}})\|_{\mathbb J}=\sup_{u,\boldsymbol s,\boldsymbol z}\left|\phi_j(\boldsymbol{\mathcal J})(u,\boldsymbol s,\boldsymbol z)-\phi_j(\tilde{\boldsymbol{\mathcal J}})(u,\boldsymbol s,\boldsymbol z)\right|\\&\leq\sup_{u,\boldsymbol s,\boldsymbol z}\mathbb E_{J,\omega}\Bigg|\exp\left(\sum_{m=1}^d\left(s_mh_{mj,J,\omega}(u)+\int_0^uh_{mj,J,\omega}(v)\left(1-\mathcal J_m(u-v,\boldsymbol s,\boldsymbol z)\right)\ \mathrm dv\right)\right)\\&\phantom{\leq\sup_{u,\boldsymbol s,\boldsymbol z}\mathbb E_{J}\ }-\exp\left(\sum_{m=1}^d\left(s_mh_{mj,J,\omega}(u)+\int_0^uh_{mj,J,\omega}(v)\left(1-\tilde{\mathcal J}_m(u-v,\boldsymbol s,\boldsymbol z)\right)\ \mathrm dv\right)\right)\Bigg|\\&\leq\sup_{u,\boldsymbol s,\boldsymbol z}\mathbb E_{J,\omega}\left|\sum_{m=1}^d\int_0^uh_{mj,J,\omega}(v)\left(\mathcal J_m(u-v,\boldsymbol s,\boldsymbol z)-\tilde{\mathcal J}_m(u-v,\boldsymbol s,\boldsymbol z)\right)\ \mathrm d v\right|\\&\leq\sum_{m=1}^d\mathbb E_{J,\omega}\left[\int_0^t h_{mj,J,\omega}(v)\sup_{u,\boldsymbol s,\boldsymbol z}\left|\mathcal J_m(u-v,\boldsymbol s,\boldsymbol z)-\tilde{\mathcal J}_m(u-v,\boldsymbol s,\boldsymbol z)\right|\ \mathrm d v\right]\\&\leq\sum_{m=1}^d\mathbb E_{J,\omega}\|h_{mj,J,\omega}\|_{\infty}\int_0^t\sup_{u,\boldsymbol s,\boldsymbol z}\left|\mathcal J_m(u,\boldsymbol s,\boldsymbol z)-\tilde{\mathcal J}_m(u,\boldsymbol s,\boldsymbol z)\right| \ \mathrm dv\\&\leq d\max_{m,j\in[d]}\mathbb E_{J,\omega}\|h_{mj,J,\omega}\|_\infty t\delta,
\end{align*} 
where the first inequality follows by the triangle inequality and the fact that $|z_i|\leq1$; the second by the mean value theorem applied to $x\mapsto e^x$, using that $\mathcal J_m(u-v,\boldsymbol s,\boldsymbol z)\leq 1$; the third by three more triangle inequalities and positivity of the integrand; the fourth since $h_{mj,J_\omega}(v)<\|h_{mj,J_\omega}\|_\infty$ a.e.; and the fifth is obvious. 
Note that $\max_{m,j\in[d]}\mathbb E_{J,\omega}\|h_{mj,J,\omega}\|_\infty<\infty$ by \cite{DHpaper-app}, Definition~2. 
It follows that if $$\delta<\frac{\epsilon}{d\max_{m,j\in[d]}\mathbb E_{J,\omega}\|h_{mj,J,\omega}\|_\infty t},$$ then $\|\phi_j(\boldsymbol{\mathcal J})-\phi_j(\tilde{\boldsymbol{\mathcal J}})\|_{\mathbb J}<\epsilon$.
\end{proof}

\begin{remark}
The restriction that we should work on bounded intervals $[0,t]$ in Lemma~\ref{phicontinuouslemma} is no obstacle. 
Whenever we want to use the bound appearing in the proof of this lemma, or want to use continuity of $\phi$, we do this to find some value $\boldsymbol{\mathcal G}(u)$. 
Then we can just take any $t\geq u$, and apply the lemma.
\end{remark}

We now state the convergence result. 
For some joint transform $\boldsymbol{\mathcal J}^{(0)}(\cdot)\in\mathbb J^d$. We construct the sequence $(\boldsymbol{\mathcal J}^{(n)}(\cdot))_{n\in\mathbb N_0}$ by setting $\boldsymbol{\mathcal J}^{(n)}(\cdot):=\phi(\boldsymbol{\mathcal J}^{(n-1)})(\cdot)$. 
By Lemma~\ref{lemmaphiwelldefined}, we know that $\boldsymbol{\mathcal J}^{(n)}(\cdot)\in\mathbb J^d$ for all $n\in\mathbb N_0$.

\begin{theorem}\label{theoremmultconvergence}
For any $\boldsymbol{\mathcal J}^{(0)}(\cdot)\in\mathbb J^d$, the sequence $(\boldsymbol{\mathcal J}^{(n)}(u))_{n\in\mathbb N_0}$ converges pointwise to the fixed point
$\boldsymbol{\mathcal G}(u):=\boldsymbol{\mathcal J}_{\boldsymbol S^{\boldsymbol Q},\boldsymbol S^{\boldsymbol \lambda}}(u)$. 
That is, as $n\to\infty$, for any $u\leq t$, \begin{equation}
\boldsymbol{\mathcal J}^{(n)}(u)\equiv\boldsymbol{\mathcal J}^{(n)}(u,\boldsymbol s,\boldsymbol z)\to\boldsymbol{\mathcal J}_{\boldsymbol S^{\boldsymbol Q},\boldsymbol S^{\boldsymbol \lambda}}(u,\boldsymbol s,\boldsymbol z)\equiv\boldsymbol{\mathcal J}_{\boldsymbol S^{\boldsymbol Q},\boldsymbol S^{\boldsymbol \lambda}}(u).
\end{equation}
\end{theorem}
\begin{proof}
The proof is analogous to the proof of \cite{multivariateKLM-app}, Theorem~3. 
We should refer to Lemma~\ref{phicontinuouslemma} and Theorem~\ref{multifixedpoint} wherever they refer to their Lemma~2 and Theorem~2, respectively.
\end{proof}

Theorem~\ref{theoremmultconvergence} describes how $\boldsymbol{\mathcal J}_{\boldsymbol S^{\boldsymbol Q},\boldsymbol S^{\boldsymbol \lambda}}(u)$ can be approximated. 
Together with Theorem~\ref{jointtransformprocesstocluster}, this gives a full characterization of the time-dependent joint transform of $(\boldsymbol Q(\cdot),\boldsymbol\lambda(\cdot))$. 
This also gives rise to a numerical procedure to characterize $(\boldsymbol Q(\cdot),\boldsymbol\lambda(\cdot))$: take some $\boldsymbol{\mathcal J}^{(0)}(\cdot)\in\mathbb J^d$, and approximate $\boldsymbol{\mathcal J}_{\boldsymbol S^{\boldsymbol Q},\boldsymbol S^{\boldsymbol \lambda}}(u)$ for a grid of values $\{u_1,\ldots, u_N\}\subset[0,T]$, by iterating $\phi$ until convergence obtains. 
Using this, it is possible to approximate $\mathcal J_{\boldsymbol Q,\boldsymbol \lambda}(t,\boldsymbol s,\boldsymbol z)$ with the aid of Theorem~\ref{jointtransformprocesstocluster}. 
Thereafter, this joint transform can either be inverted numerically  to determine the multivariate CDF, or it can be differentiated numerically to determine joint moments.

{\small

}

\end{document}